\DeclareMathOperator{\rank}{rank}
\DeclareMathOperator{\Tor}{Tor}
\DeclareMathOperator{\Aut}{Aut}
\DeclareMathOperator{\End}{End}
\DeclareMathOperator{\id}{id}
\DeclareMathOperator{\Hom}{Hom}
\DeclareMathOperator{\im}{Im}
\DeclareMathOperator{\Ker}{Ker}
\DeclareMathOperator{\NS}{NS}
\DeclareMathOperator{\Trans}{T}
\DeclareMathOperator{\Inv}{T}
\DeclareMathOperator{\Orth}{S}
\DeclareMathOperator{\disc}{disc}
\DeclareMathOperator{\Sym}{Sym}
\DeclareMathOperator{\Tot}{Tot}
\DeclareMathOperator{\td}{td}
\newcommand{\vac}{|0\rangle}
\newcommand{\odd}{{\rm{odd}}}
\newcommand{\even}{{\rm{even}}}
\newcommand{\aG}{{\rm{a}}_G}
\newcommand{\mG}{{\rm{m}}_G}
\newcommand{\coloneqq}{:=}
\newcommand{\Rmnum}[1]{\expandafter\@slowromancap\romannumeral #1@}
\newcommand{\RR}{\mathbf{R}}
\newcommand{\BG}{BG}
\newcommand{\EG}{EG}
\newcommand{\IC}{\mathbb{C}}
\newcommand{\IF}{\mathbb{F}}
\newcommand{\IK}{\mathbb{K}}
\newcommand{\IN}{\mathbb{N}}
\newcommand{\IP}{\mathbb{P}}
\newcommand{\IR}{\mathbb{R}}
\newcommand{\IQ}{\mathbb{Q}}
\newcommand{\IZ}{\mathbb{Z}}
\newcommand{\cF}{\mathcal{F}}
\newcommand{\cO}{\mathcal{O}}
\newcommand{\km}{\mathfrak{m}}
\newcommand{\kq}{\mathfrak{q}}
\newcommand{\lra}{\longrightarrow}
\theoremstyle{plain}
\newtheorem{theorem}{Theorem}[section]
\newtheorem{lemma}[theorem]{Lemma}
\newtheorem{proposition}[theorem]{Proposition}
\newtheorem{corollary}[theorem]{Corollary}
\theoremstyle{definition}
\newtheorem{definition}[theorem]{Definition}
\theoremstyle{remark}
\newtheorem{remark}[theorem]{Remark}
\begin{document}

\title[Smith theory and irreducible holomorphic symplectic manifolds]{Smith theory and irreducible holomorphic symplectic manifolds}

\author{Samuel Boissi\`ere}
\address{Samuel Boissi\`ere, Laboratoire de Math\'ematiques et Applications, UMR CNRS 6086, Universit\'e de Poitiers, T\'el\'eport 2, Boulevard Marie et Pierre Curie, F-86962 Futuroscope Chasseneuil}
\email{samuel.boissiere@math.univ-poitiers.fr}
\urladdr{http://www-math.sp2mi.univ-poitiers.fr/$\sim$sboissie/}

\author{Marc Nieper-Wi{\ss}kirchen}
\address{Marc Nieper-Wi{\ss}kirchen, Lehrstuhl f\"ur Algebra und Zahlentheorie, Universit\"ats\-stra{\ss}e~14, D-86159 Augsburg}
\email{marc.nieper-wisskirchen@math.uni-augsburg.de}
\urladdr{http://www.math.uni-augsburg.de/alg/}

\author{Alessandra Sarti}
\address{Alessandra Sarti, Laboratoire de Math\'ematiques et Applications, UMR CNRS 6086, Universit\'e de Poitiers, T\'el\'eport 2, Boulevard Marie et Pierre Curie, F-86962 Futuroscope Chasseneuil}
\email{alessandra.sarti@math.univ-poitiers.fr}
\urladdr{http://www-math.sp2mi.univ-poitiers.fr/$\sim$sarti/}

\date{\today}

\subjclass{Primary 14J50; Secondary 14C50, 55T10}

\keywords{Smith theory, holomorphic symplectic manifolds, automorphisms}

\begin{abstract} 
We study the cohomological properties of the fixed locus $X^G$ of an automorphism group $G$ of prime order $p$ 
acting on a variety $X$ whose integral cohomology is torsion-free. We obtain a precise relation
between the mod $p$ cohomology of $X^G$ and natural invariants for the action of $G$
on the integral cohomology of $X$. We apply these results to irreducible holomorphic symplectic manifolds 
of deformation type of the Hilbert scheme of two points on a K3 surface:
the main result of this paper is a formula relating the dimension of the mod $p$ cohomology of 
$X^G$ with the rank and the discriminant of the invariant lattice in the second 
cohomology space with integer coefficients of $X$. 
\end{abstract}

\maketitle


\section{Introduction}

Smith theory is the study of the cohomological properties of a group $G$ of prime order $p$
acting on a topological space $X$. The first important results were obtained by Smith 
in the late 1930's by the introduction of the so-called Smith cohomology groups and 
sequences (see Bredon~\cite{Bredon}). The use of equivariant cohomology to reformulate Smith theory was 
begun by Borel~\cite{Borel} in the 1950's and further formalized as the ``localisation
theorem'' of Borel--Atiyah--Segal--Quillen in the 1960's (see Dwyer--Wilkerson~\cite{DW}).

In this paper,
we use these ideas to relate the dimension of the mod $p$ cohomology of the fixed point
set $X^G$ to natural invariants for the action of $G$ on the integral cohomology
 $H^\ast(X,\IZ)$ for $2\leq p\leq 19$ (see Corollaries~\ref{cor:formulep=2}~\&~\ref{cor:formulepqqe}). This applies nicely to the study of prime order automorphisms
on some symplectic holomorphic varieties, particularly those in the deformation class
of the Hilbert scheme $S^{[2]}$ of two points on a K3 surface $S$. 
The first main result of this paper is a degeneracy condition for the spectral sequence of equivariant cohomology
$$
E_2^{r,s}\coloneqq H^r(G;H^s(X,\IF_p))\Longrightarrow H^{r+s}_G(X,\IF_p).
$$

\begin{theorem} Let $G$  be a group of prime order $p$ acting by automorphisms on an irreducible holomorphic symplectic  variety $X$. 
The spectral sequence of equivariant cohomology with coefficients in~$\IF_p$ degenerate at 
the $E_2$-term in the following cases:
\begin{enumerate}
\item $X$ is deformation equivalent to the Hilbert scheme $S^{[2]}$ of two points on a K3 surface $S$ and $p\notin \{2,5,23\}$.
\item $X=S^{[2]}$, $G$ acts by natural automorphisms (induced by automorphisms of the surface $S$) and $p\neq 2$.
\end{enumerate}
\end{theorem}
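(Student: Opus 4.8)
The plan is to establish degeneration by proving that every differential $d_r$, $r\ge2$, vanishes. The conceptual backbone is the relation established above: for $G=\IZ/p$ the sequence degenerates at $E_2$ if and only if $\dim_{\IF_p}H^\ast(X^G,\IF_p)$ equals the total number of non-free indecomposable $\IF_p[G]$-summands of $H^\ast(X,\IF_p)$, so it is this numerical equality that I aim to verify. Two structural facts reduce the task drastically. First, an irreducible holomorphic symplectic manifold of $K3^{[2]}$-type has Betti numbers $1,0,23,0,276,0,23,0,1$, so $H^{\odd}(X,\IF_p)=0$ and the $E_2$-page is concentrated in even rows; the differential $d_2\colon E_2^{a,b}\to E_2^{a+2,b-1}$ maps an even row to an odd row and hence vanishes, giving $E_3=E_2$. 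Second, for $p$ odd --- which covers every case of the statement --- the fixed locus $X^G$ is non-empty, so a fixed point provides a section of the Borel fibration $X\hookrightarrow X_G\to\BG$; the edge map $H^\ast(\BG,\IF_p)\to H^\ast_G(X,\IF_p)$ is therefore split injective, $E_r^{\ast,0}=H^\ast(G,\IF_p)$ consists of permanent cycles, and all differentials are $H^\ast(G,\IF_p)$-linear.

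By multiplicativity the obstruction is now concentrated on a set of algebra generators of $H^\ast(X,\IF_p)$, which I claim lie in degree two for the relevant primes. Over $\IQ$ the ring is generated by $H^2$: since $\dim\Sym^2 H^2(X,\IQ)=\binom{24}{2}=276=b_4$, the multiplication $\Sym^2 H^2(X,\IQ)\xrightarrow{\sim}H^4(X,\IQ)$ is an isomorphism, and Poincar\'e duality supplies the higher degrees. Consequently $H^\ast(X,\IF_p)$ is generated by $H^2(X,\IF_p)$ unless $p$ divides the order of the finite cokernel of the integral map $\Sym^2 H^2(X,\IZ)\to H^4(X,\IZ)$. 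Granting generation in degree two, the only possibly non-zero differential on an invariant class $x\in E_3^{0,2}$ is $d_3\colon E_3^{0,2}\to E_3^{3,0}=H^3(G,\IF_p)$, the higher differentials landing in the negative rows $E_r^{r,3-r}$; the degree-six and top classes, being products of degree-two classes, are then automatically permanent cycles. Thus degeneration reduces to the vanishing of $d_3$ on $H^2(X,\IF_p)^G$.

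To kill $d_3$ I would compare with the integral Borel spectral sequence, whose target in the relevant position is $H^3(\IZ/p,\IZ)=0$; this forces $d_3$ to annihilate every class in the image of $H^2(X,\IZ)^G\to H^2(X,\IF_p)^G$. The hard part, and what I expect to be the main obstacle, is the invariant classes mod $p$ that do not lift to $H^2(X,\IZ)^G$: their contribution is governed by $H^1(G;H^2(X,\IZ))$, hence by the $\IF_p[G]$-module obtained by reducing the lattice $H^2(X,\IZ)\cong U^3\oplus E_8(-1)^2\oplus\langle-2\rangle$ of discriminant $2$. I would translate the resulting defect of degeneration into the invariants $\rank$ and $\disc$ of the invariant sublattice by the relation established above, and verify that it can be non-zero only for $p\in\{2,5,23\}$; together with the possible non-surjectivity of $\Sym^2 H^2(X,\IZ)\to H^4(X,\IZ)$ mod $p$, this isolates precisely the exceptional set of part~(1).

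For part~(2) the same scheme applies but the arithmetic collapses. A natural automorphism of $S^{[2]}$ is induced by an automorphism of the $K3$ surface $S$, so the $G$-module $H^\ast(S^{[2]},\IZ)$ is built functorially from $H^\ast(S,\IZ)$ and the whole computation descends to the surface. On a $K3$ surface the intersection form is unimodular, so $\Sym^2 H^2(S,\IZ)\to H^4(S,\IZ)=\IZ$ is surjective modulo every prime and $H^\ast(S,\IF_p)$ is generated by $H^2(S,\IF_p)$ for all $p$; since also $H^{\odd}(S,\IF_p)=0$, the first two paragraphs reduce degeneration to $d_3$ on $H^2(S,\IF_p)^G$. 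For odd $p$ the self-duality of $H^2(S,\IZ)$ together with the integral comparison forces $d_3=0$, whereas for $p=2$ the degree-one polynomial generator of $H^\ast(B\IZ/2,\IF_2)$ --- which has no odd-$p$ analogue --- obstructs the argument, leaving $p=2$ as the single exception.
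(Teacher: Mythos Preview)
Your strategy differs from the paper's and several preliminary steps are sound: the vanishing of $d_2$ from $H^{\odd}=0$, the permanence of $E_r^{\ast,0}$ via the section coming from a fixed point, and --- for primes away from the cokernel of $\Sym^2 H^2(X,\IZ)\to H^4(X,\IZ)$ --- the reduction to the single differential $d_3$ on $E_3^{0,2}=H^2(X,\IF_p)^G$. But two steps are not proofs. Your handling of $d_3$ on the classes not lifting to $H^2(X,\IZ)^G$ appeals to ``the relation established above'' to control the defect; that relation (Corollary~\ref{cor:formulepqqe}) is itself a \emph{consequence} of degeneration, so the argument is circular, and nothing in your framework produces the prime~$23$ --- the lattice has discriminant~$2$ and the cokernel of $\Sym^2\!\to\!H^4$ sees only $2$ and~$5$. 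For part~(2) the ``descent to the surface'' is unjustified: degeneration for the $G$-action on $S$ does not formally imply it for $S^{[2]}$, and your generation-by-$H^2$ argument on $S^{[2]}$ itself still fails at $p=5$ by Proposition~\ref{prop:indiceS2H4}, natural action or not.

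The paper bypasses all of this by applying Deligne's decomposition criterion (Proposition~\ref{prop:degE2Hilb2}) directly: one exhibits a $G$-equivariant class $c\in H^4(X,\IF_p)$ such that $\cup\,c\colon H^2\to H^6$ and $\cup\,c^2\colon H^0\to H^8$ are isomorphisms, and degeneration follows at once without ever inspecting individual differentials. For arbitrary deformations one takes $c=c_2(X)$; Lemma~\ref{lem:DeligneForc2} computes $\int c_2^2=2^2\cdot 3^2\cdot 23$ and the discriminant of $\cup\,c_2$ as $2^{24}\cdot 3^{23}\cdot 5^{23}$, isolating the primes $2,3,5,23$, and the Beauville--Bogomolov class $u=4q^{-1}$ (Lemma~\ref{lem:DeligneForq}) then rescues $p=3$. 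For natural automorphisms the exceptional divisor is $G$-invariant, and Lemma~\ref{lem:DeligneForE} gives $\int e^4=2^6\cdot 3$ and discriminant $2^{70}\cdot 3$, so $c=e^2$ works for all $p\neq 2,3$; again $u$ covers $p=3$, which is how $p=5$ is handled in case~(2). The exceptional prime~$23$ thus arises from an explicit Chern-number computation on the fourfold, not from lattice data on $H^2$; this is precisely the ingredient your outline lacks.
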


This result is proven in Proposition~\ref{prop:degenere} as a consequence of Deligne's criterium (see Section~\ref{ss:degE2})  applied to specific geometrical objects in the cohomology of $S^{[2]}$ (Lemma~\ref{lem:DeligneForE}, \ref{lem:DeligneForc2}~\&~\ref{lem:DeligneForq}).

For $X$ deformation equivalent to $S^{[2]}$, denote by $\Inv_G(X)\coloneqq H^2(X,\IZ)^G$ the invariant lattice and by $\Orth_G(X)\coloneqq \Inv_G(X)^\perp$ its orthogonal complement for the Beauville--Bogomolov bilinear form. We define (see Definitions~\ref{def:aG}~\&~\ref{def:mG}) two integers $\aG(X),\mG(X)\in\IN$ with the property that
$$
\frac{H^2(X,\IZ)}{\Inv_G(X)\oplus\Orth_G(X)}\cong\left(\frac{\IZ}{p\IZ}\right)^{\aG(X)}, \quad \rank\Orth_G(X)=\mG(X)(p-1).
$$

The second main result of this paper is the following formula:
\begin{theorem}
Let $X$ be deformation equivalent to $S^{[2]}$ and $G$ be a group of automorphisms of prime order $p$ on $X$ with
$3\leq p\leq 19$, $p\neq 5$. Then:
\begin{align*}
\dim H^\ast(X^G,\IF_p)&=324-2\aG(X)\left(25-\aG(X)\right)-(p-2)\,\mG(X)\left(25-2\aG(X)\right)\\
&+\frac{1}{2}\mG(X)\left((p-2)^2\mG(X)-p\right)
\end{align*}
with 
\begin{align*}
2&\leq (p-1)\mG(X)< 23,\\
0&\leq \aG(X)\leq \min\{(p-1)\mG(X),23-(p-1)\mG(X)\}.
\end{align*}
\end{theorem}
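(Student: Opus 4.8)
The plan is to combine the degeneration of the equivariant spectral sequence with the integral representation theory of $H^\ast(X,\IZ)$ as a $\IZ[G]$-module, and to reduce the entire computation to the single degree $H^4$. Since $3\le p\le 19$ and $p\neq 5$ force $p\notin\{2,5,23\}$, Proposition~\ref{prop:degenere} guarantees that the spectral sequence $E_2^{r,s}=H^r(G;H^s(X,\IF_p))\Rightarrow H^{r+s}_G(X,\IF_p)$ degenerates at $E_2$. Under this hypothesis, and because $H^\ast(X,\IZ)$ is torsion-free, Corollary~\ref{cor:formulepqqe} expresses $\dim H^\ast(X^G,\IF_p)$ as a sum over the cohomological degrees of the number of non-free indecomposable $\IF_p[G]$-summands of $H^s(X,\IF_p)$. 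Writing each $\IZ[G]$-module $M=H^s(X,\IZ)$ in its Diederichsen--Reiner form $M\cong \IZ[G]^{a}\oplus\IZ[\zeta_p]^{b}\oplus\IZ^{c}$, this count equals $f(M):=\dim_{\IF_p}\hat H^0(G;M)+\dim_{\IF_p}\hat H^1(G;M)=b+c$, together with the bookkeeping identity $f(M)=\frac{\rank M-\rank M^G}{p-1}+\rank M^G-2a$, where $a=a(M)$ is the number of free summands. Everything is thus reduced to determining, in each degree, the rank, the rank of the invariants, and the free-summand count $a(H^s)$.

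I first dispose of the easy degrees. By Göttsche's formula the even Betti numbers are $1,23,276,23,1$, accounting for the constant $324$. Poincaré duality gives $H^0\cong H^8\cong\IZ$ (trivial, so $a=0$ and $f=1$) and $H^6\cong\dual{H^2}$, whence $H^6$ has the same Diederichsen--Reiner type as $H^2$. For $H^2$ itself, Definitions~\ref{def:aG}~\&~\ref{def:mG} translate into the multiplicities $a=\aG(X)$, $b=\mG(X)-\aG(X)$, $c=23-(p-1)\mG(X)-\aG(X)$, so that $\rank\Inv_G(X)=23-(p-1)\mG(X)$ and $f(H^2)=f(H^6)=23-(p-2)\mG(X)-2\aG(X)$. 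Hence the whole problem collapses to computing $f(H^4)$, equivalently $a(H^4)$.

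The heart of the matter is $H^4$. Verbitsky's and Fujiki's description of the subalgebra generated by $H^2$ shows that for the $S^{[2]}$-deformation type the relations start only in degree $2\cdot 2+2=6$, so cup product induces a $G$-equivariant isomorphism $\Sym^2 H^2(X,\IQ)\xrightarrow{\ \sim\ }H^4(X,\IQ)$ of $276$-dimensional spaces. Decomposing $H^2(X,\IQ)=T^{\oplus t}\oplus V^{\oplus v}$ into the trivial representation $T$ and the faithful irreducible $V=\IQ(\zeta_p)$, with $t=23-(p-1)\mG(X)$ and $v=\mG(X)$, the representation-theoretic identities $\dim(\Sym^2 V)^G=\dim(\wedge^2 V)^G=(p-1)/2$ yield $\dim(\Sym^2 V^{\oplus v})^G=\tfrac{p-1}{2}v^2$ and hence $\rank H^4(X,\IQ)^G=\binom{t+1}{2}+\tfrac{p-1}{2}v^2$. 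The remaining and genuinely integral input is the free-summand count $a(H^4)$, which is invisible both rationally and to Tate cohomology and must be extracted from the $\IZ[G]$-lattice structure itself. The main obstacle is therefore to control the integral cup-product map $\Sym^2 H^2(X,\IZ)\to H^4(X,\IZ)$ — which need not be an isomorphism of lattices — and then to count the free $\IZ[G]$-summands of $\Sym^2 M$ by decomposing the symmetric and exterior squares and the cross terms of $\IZ[G]$, $\IZ[\zeta_p]$ and $\IZ$ (the free summands arising from the $\IZ[G]$-cross terms together with $\Sym^2$ and $\wedge^2$ of $\IZ[\zeta_p]^{\oplus v}$). I expect this to produce $a(H^4)=\tfrac12\big(\mG(X)+(p-2)\mG(X)^2\big)-(p-2)\mG(X)\aG(X)-\aG(X)^2+23\,\aG(X)$.

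Finally, substituting $f(H^0)=f(H^8)=1$, $f(H^2)=f(H^6)=23-(p-2)\mG(X)-2\aG(X)$ and $f(H^4)=\rank H^4(X,\IQ)^G+\frac{276-\rank H^4(X,\IQ)^G}{p-1}-2a(H^4)$ into $\dim H^\ast(X^G,\IF_p)=\sum_s f(H^s)$ and simplifying gives the stated formula; the numerology is already confirmed by the value $324$ at $\aG(X)=\mG(X)=0$. The inequalities are then lattice theory: nontriviality of the action forces $\mG(X)\ge 1$ and the invariant lattice $\Inv_G(X)$ is nonzero, giving $2\le(p-1)\mG(X)<23$, while the glue group $H^2/(\Inv_G(X)\oplus\Orth_G(X))\cong(\IZ/p)^{\aG(X)}$ embeds into the discriminant groups of both $\Inv_G(X)$ and $\Orth_G(X)$, whose $p$-lengths are bounded by the ranks $23-(p-1)\mG(X)$ and $(p-1)\mG(X)$, yielding $0\le\aG(X)\le\min\{(p-1)\mG(X),\,23-(p-1)\mG(X)\}$.
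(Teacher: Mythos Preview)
Your outline is correct up to and including the reduction to degree~$4$, but the step you flag as ``the main obstacle'' is left unresolved, and this is precisely the crux of the argument. You write that one must ``control the integral cup-product map $\Sym^2 H^2(X,\IZ)\to H^4(X,\IZ)$'' and then simply state ``I expect this to produce'' the formula for $a(H^4)$. The paper carries out exactly this control in Proposition~\ref{prop:indiceS2H4}: the cokernel of the cup-product map is computed to be $(\IZ/2\IZ)^{23}\oplus(\IZ/5\IZ)$. This is what singles out $p=5$ in the hypothesis and is the genuinely new ingredient; without it there is no way to pass from the $\IZ[G]$-structure (or $\IF_p[G]$-structure) of $\Sym^2 H^2$ to that of $H^4$. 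Once the cokernel is known to be $p$-torsion-free, reduction modulo $p$ gives an isomorphism $\Sym^2 H^2(X,\IF_p)\cong H^4(X,\IF_p)$ of $\IF_p[G]$-modules, and then $a(H^4)=\ell_p^4(X)=\ell_p(\Sym^2 H^2(X,\IF_p))$ follows from Corollary~\ref{cor:linkaGlp}.

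A second, minor point: your proposed integral decomposition of $\Sym^2\bigl(\IZ[G]^a\oplus\IZ[\zeta_p]^b\oplus\IZ^c\bigr)$ is unnecessary once the previous step is in place. The paper works entirely over $\IF_p$ via Lemma~\ref{lem:lpSym2}, computing the Jordan-block decomposition of $\Sym^2 N_q$ and $N_q\otimes N_{q'}$ for $q,q'\in\{1,p-1,p\}$; this yields $\aG^4(X)$ and $\mG^4(X)$ directly in terms of $\aG(X)$ and $\mG(X)$ and is both shorter and avoids any integrality subtleties. Your expected formula for $a(H^4)$ is in fact correct and agrees with what Lemma~\ref{lem:lpSym2} gives, but it is not established in your text.
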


This formula is proven in Corollary~\ref{cor:mainformula}. The proof
uses first the localisation theorem as presented in Allday--Puppe~\cite{AlldayPuppe} (see Proposition~\ref{prop:XGTor}), secondly the degeneracy conditions for the spectral sequence of equivariant cohomology with coefficients
in $\IF_p$, then the determination of the $\IZ[G]$-module structure of the cohomology space $H^\ast(X,\IZ)$ (Proposition~\ref{prop:decompCurtisReiner}),
and finally the computation of the quotient $H^4(X,\IZ)/\Sym^2H^2(X,\IZ)$  (Proposition~\ref{prop:indiceS2H4}). The relation with the discriminant of the invariant lattice and its orthogonal is given in Lemma~\ref{lem:descriptionOrthInv}.

As an application of our results, we show in Section~\ref{ss:applications} that there are no free actions by finite groups on deformations of $S^{[2]}$, and we study  an order eleven automorphism on a Fano variety of lines of a cubic fourfold constructed by Mongardi~\cite{Mongardi2}.

{\it Aknowledgements.} We thank Olivier Debarre, Alexandru Dimca, William G. Dwyer, Viacheslav Kharlamov, Giovanni Mongardi, Kieran O'Grady and Volker Puppe for useful discussions and helpful comments.


\section{Terminology and notation}\label{sec:terminology}

Let $p$ be a prime number and $G$ a finite cyclic group of order $p$.
We fix a generator~$g$ of~$G$. Put $\tau\coloneqq   g-1\in\IZ[G]$ and
$\sigma\coloneqq  1+g+\cdots+g^{p-1}\in\IZ[G]$.

Let $M$ be a finite-dimensional $\IF_p$-vector space equipped with a linear 
action of $G$ (a $\IF_p[G]$-module for short). The minimal polynomial of $g$, 
as an  endomorphism of $M$, divides the polynomial $X^p-1=(X-1)^p\in\IF_p[X]$ 
hence $g$ admits a Jordan normal form. We can thus decompose $M$ as a direct sum  
of some $G$-modules $N_q$ of dimension $q$ for $1\leq q\leq p$, where $g$ acts 
on $N_q$ by a matrix (in a suitable basis) of the following form:
$$
\left(
\raisebox{0.5\depth}{\xymatrixcolsep{1ex}\xymatrixrowsep{1ex}
\xymatrix{1 \ar@{-}[dddrrr] & 1 \ar@{-}[ddrr] &  & \text{\huge{$0$}} \\
 &   &  & \\
 & & & 1\\
\text{\huge{$0$}} & & & 1 }}
\right)
$$
Observe that $N_p$ is isomorphic to $\IF_p[G]$ as a  $G$-module. Throughout this paper, the notation $N_q$ will always denote the $\IF_p[G]$-module defined by the Jordan matrix of
 dimension $q$ above. We define the integer $\ell_q(M)$ as the number of blocks of length~$q$ in the Jordan decomposition of the $G$-module~$M$,
 in such a way that $M\cong\bigoplus_{q=1}^p N_q^{\oplus \ell_q(M)}$.

Let $H\coloneqq \bigoplus_{k\geq 0} H^k$ be a finite-dimensional graded $\IF_p$-vector space, where each graded component $H^k$ is equipped with a linear action of $G$. We define similarly, for any $k\geq 0$ and $1\leq q\leq p$, the integer $\ell_q^k(H)$ as the number of blocks of length~$q$ in the Jordan decomposition of the $G$-module $H^k$.

For any topological space $Y$ with the homotopy type of a finite CW-complex and any field $\IK$, we set $h^k(Y,\IK)\coloneqq \dim_\IK H^k(Y,\IK)$ and $h^*(Y,\IK)\coloneqq \sum_{k\geq 0} h^k(Y,\IK)$.

Let $X$ be a smooth connected orientable compact real even-dimensional manifold, with a smooth orientation-preserving 
action of~$G$. Denote by $X^G\subset X$ the fixed locus of $X$ for the action of $G$; then $X^G$ is a smooth submanifold of $X$. We define the integers $\ell_q^k(X)$ for $1\leq q\leq p$ and $0\leq k\leq\dim_\IR X$ as the number of blocks of length $q$ in the Jordan decomposition of the $G$-modules $H^k(X,\IF_p)$ and we set $\ell^\ast_q(X)\coloneqq \sum_{k\geq 0}\ell^k_q(X)$.


\section{Some useful computations in group cohomology}

There is a projective resolution $F_*\xrightarrow{\epsilon}\IZ$ of $\IZ$ considered as a  $G$-module
with a trivial action, given by:
\begin{eqnarray}\label{res}
\cdots\lra\IZ[G]\xrightarrow{\tau}\IZ[G]\xrightarrow{\sigma}\IZ[G]\xrightarrow{\tau}\IZ[G]\xrightarrow{\epsilon}\IZ\lra 0
\end{eqnarray}
where $\epsilon$ is the summation map: $\epsilon(\sum_{j=0}^{p-1}\alpha_jg^j)=\sum_{j=0}^{p-1}\alpha_j$ and 
$\tau$, $\sigma$ act by multiplication.

Let $\IF_p\coloneqq \IZ/p\IZ$ considered as a trivial $G$-module. The cohomology groups of $G$ with coefficients in $\IF_p$ are the cohomology groups of the complex:
$$ 
0 \rightarrow\Hom_{G}(\IZ[G],\IF_p)\overset{\tau^\ast}{\rightarrow}\Hom_{G}(\IZ[G],\IF_p)\overset{\sigma^\ast}{\rightarrow}\Hom_{G}(\IZ[G],\IF_p)\overset{\tau^\ast}{\rightarrow}\cdots
$$
Observe that $\Hom_{G}(\IZ[G],\IF_p)\cong\IF_p$ by identifying a $G$-homomorphism $c$ with its image $c(1)\in \IF_p$, so
$\tau^*$ and $\sigma^*$ are identically zero and we get $H^i(G;\IF_p)\cong\IF_p$ for all $i\geq 0$.

Let now $M$ be as before a $\IF_p[G]$-module of finite dimension over $\IF_p$. The cohomology of $G$ with coefficients in $M$ can be computed in a similar way as the cohomology of the complex:
$$
0 \rightarrow M\overset{\bar\tau}{\rightarrow}M\overset{\bar\sigma}{\rightarrow}M\overset{\bar\tau}{\rightarrow}\cdots
$$ 
where $\bar\tau,\bar\sigma\in\IF_p[G]$ denote the reduction modulo $p$ of
$\tau$ and $\sigma$. Observe that $\bar\sigma=(\bar\tau)^{p-1}$. 
To compute $H^*(G;M)$ as an $\IF_p$-vector space it is enough to compute the groups $H^*(G;N_q)$. 

\begin{lemma}\label{lem:cohGadditif}\text{} 
\begin{enumerate}
\item If $q<p$ then $H^i(G;N_q)=\IF_p$ for all $i\geq 0$.
\item $H^0(G;N_p)=\IF_p$ and $H^i(G;N_p)=0$ for all $i\geq 1$.
\end{enumerate}
\end{lemma}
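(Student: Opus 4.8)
The plan is to compute the cohomology directly from the two-periodic complex
$$
0 \to N_q \xrightarrow{\bar\tau} N_q \xrightarrow{\bar\sigma} N_q \xrightarrow{\bar\tau} \cdots
$$
exhibited just above the statement, using the explicit Jordan form of the $G$-action. First I would record that on $N_q$ the endomorphism $\bar\tau = g-1$ acts as the nilpotent shift $J$ coming from the Jordan matrix, so that in the adapted basis $e_1,\dots,e_q$ one has $J e_1 = 0$ and $J e_i = e_{i-1}$ for $i\geq 2$; in particular $J^q = 0$ but $J^{q-1}\neq 0$. Combined with the identity $\bar\sigma = \bar\tau^{p-1}$ already noted in the excerpt (which rests on $X^p-1 = (X-1)^p$ in $\IF_p[X]$, whence $\sigma \equiv (X-1)^{p-1}$ modulo $p$), all the differentials are determined by the single nilpotent endomorphism $J$.

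Next I would read off kernels and images in this basis: one has $\ker J = \langle e_1\rangle$ and $\im J = \langle e_1,\dots,e_{q-1}\rangle$, independently of $p$. The behaviour of $\bar\sigma = J^{p-1}$ is where the two cases split, and this is the only genuine point to check. For $q<p$ we have $q\leq p-1$, so $J^{p-1}=0$; the complex then reads $N_q \xrightarrow{J} N_q \xrightarrow{0} N_q \xrightarrow{J} \cdots$, and its cohomology is $\ker J$ in degree $0$, again $\ker J/\im J^{p-1} = \ker J$ in positive even degrees, and $N_q/\im J$ in odd degrees. Each of these is one-dimensional, which yields $H^i(G;N_q) = \IF_p$ for all $i\geq 0$ and proves (1).

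For $q=p$ the module $N_p\cong\IF_p[G]$ is the regular representation, hence free and a fortiori projective over $\IF_p[G]$; this alone forces $H^i(G;N_p)=0$ for $i\geq 1$, while $H^0(G;N_p) = (N_p)^G = \ker J = \IF_p$. I would nonetheless cross-check against the complex: now $J^{p-1}\neq 0$ has rank one, with $\im J^{p-1} = \langle e_1\rangle$ and $\ker J^{p-1} = \langle e_1,\dots,e_{p-1}\rangle = \im J$, so the odd cohomology $\ker J^{p-1}/\im J$ and the even cohomology $\ker J/\im J^{p-1}$ in degrees $\geq 1$ both vanish, confirming (2). The main subtlety throughout is precisely this $p$-dependence of $\bar\sigma = J^{p-1}$: the entire dichotomy hinges on whether $J^{p-1}$ is zero ($q<p$) or has rank one ($q=p$), so the key step is to compare this single exponent against the nilpotency index $q$, after which the two-periodicity of the complex delivers the answer in all degrees simultaneously.
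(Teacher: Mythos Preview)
Your argument is correct and follows essentially the same route as the paper's own proof: write down the two-periodic complex, compute $\ker\bar\tau$, $\im\bar\tau$, $\ker\bar\sigma$, $\im\bar\sigma$ in the Jordan basis using $\bar\sigma=\bar\tau^{p-1}$, and read off the quotients. The only addition is your remark that $N_p\cong\IF_p[G]$ is projective, which gives an independent confirmation of case~(2); the paper instead just reads this case off the explicit kernels and images.
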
 

\begin{proof}
The case $q=1$ is clear since $N_1\cong \IF_p$ as a trivial $G$-module. Assume now that $q\geq 2$.
Let $v_1,\ldots,v_q$ be a basis of $N_q$ such that $gv_1=v_1$ and $gv_i=v_{i-1}+v_i$ for all $i\geq 2$. It is
easy to compute that, as endomorphisms of $N_q$, one has $\ker(\bar\tau)=\langle v_1\rangle$ and $\im(\bar\tau)=\langle 
v_1,\ldots,v_{q-1}\rangle $ for all $q\leq p$. Using that $\bar\sigma=(\bar\tau)^{p-1}$ we get:
$$
\ker{\bar\sigma}=\begin{cases} 
N_q & \text{if } q<p,\\
\langle v_1,\ldots,v_{p-1}\rangle & \text{if } q=p,
\end{cases}\qquad \im(\bar\sigma)=\begin{cases}
0 & \text{if } q<p,\\
\langle v_1\rangle& \text{if } q=p.\\
\end{cases}
$$
If $p=q$ the result is clear. If $q<p$ it follows from:
$$
\frac{\ker(\bar\tau)}{\im(\bar\sigma)}\cong\langle v_1\rangle,\qquad\frac{\ker(\bar\sigma)}{\im(\bar\tau)}\cong\langle v_q\rangle.
$$
\end{proof}

Recall (see \cite[Ch. V]{Brown}) that the cohomology cross-product: 
$$
H^r(G;\IF_p)\otimes_{\IZ} H^s(G;M)\lra H^{r+s}(G\times G;\IF_p\otimes_{\IZ} M)
$$
followed by a diagonal approximation: 
$$
H^{r+s}(G\times G;\IF_p\otimes_\IZ M)\xrightarrow{\Delta^*} H^{r+s}(G;\IF_p\otimes_\IZ M)\cong H^{r+s}(G;M)
$$
defines a cup-product and a graded $H^*(G;\IF_p)$-module structure on $H^*(G;M)$, where $\IF_p\otimes_{\IZ} M$ is
considered as a  $G$-module for the diagonal action (and is isomorphic to $M$ as a $G$-module since $G$ acts
trivially on $\IF_p$). Here the diagonal approximation $\Delta^*$ is induced by the maps $\Delta_{r,s}: F_{r+s}\lra F_r\otimes F_s$ given by:
$$
\Delta_{r,s}(1)=\begin{cases}
1\otimes 1&\text{for } r \text{ even}\\
1\otimes g&\text{for } r \text{ odd, }s \text{ even}\\
\sum_{0\leq i<j\leq p-1} g^i\otimes g^j&\text{for } r\text{ odd, } s\text{ odd}
\end{cases}
$$

Let $\alpha\in H^r(G;\IF_p)$ and $\beta\in H^s(G;M)$. Using again the natural identifications $\Hom_G(F_r,\IF_p)\cong\IF_p$ and $\Hom_G(F_s,M)\cong M$ one computes easily the cup-product $\alpha\cup\beta$ as follows:
\par{\it (i)} If $r$ is even, $\alpha\cup\beta=\alpha\beta$.
\par{\it (ii)} If $r$ is odd and $s$ is even, one has $\bar\tau(\beta)=0$ (see the proof of Lemma~\ref{lem:cohGadditif}) so $g\beta=\beta$ and $\alpha\cup\beta=\alpha\beta$.
\par{\it (iii)} If $r$ is odd and $s$ is odd, 
$$
\alpha\cup\beta=\alpha\cdot\left((g+2g^2+\cdots+(p-1)g^{p-1})\beta\right).
$$
We study the action of $g+2g^2+\cdots+(p-1)g^{p-1}$ on $N_q$ for $1\leq q\leq p$.

\begin{lemma}\label{lem:calcul} As an endomorphism of $N_q$, with $1\leq q\leq p$, one has:
$$
g+2g^2+\cdots+(p-1)g^{p-1}=\begin{cases}
0&\text{if } q\leq p-2,\\
-\bar\tau^{q-1} & \text{if } q=p-1,\\
-\bar\tau^{q-1}-\bar\tau^{q-2} & \text{if } q=p.
\end{cases}
$$
\end{lemma}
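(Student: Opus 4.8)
The plan is to compute the element $S\coloneqq\sum_{j=1}^{p-1}jg^j$ once and for all inside the group ring $\IF_p[G]$, and only afterwards to read off its action on each $N_q$. The key preliminary observation is that, since $g^p=1$ and $g=1+\bar\tau$, we have $\bar\tau^p=(g-1)^p=g^p-1=0$, so that $\IF_p[G]=\IF_p[\bar\tau]/(\bar\tau^p)$. Moreover, in the basis $v_1,\ldots,v_q$ used in the proof of Lemma~\ref{lem:cohGadditif} the operator $\bar\tau$ is the nilpotent shift sending $v_i\mapsto v_{i-1}$ (with $v_0=0$), so on $N_q$ one has $\bar\tau^{q}=0$ while $\bar\tau^{q-1}\neq 0$. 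It therefore suffices to express $S$ as an explicit polynomial in $\bar\tau$ and then discard the powers $\bar\tau^k$ with $k\geq q$.

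To obtain that expression I would pass to the generating polynomial $f(x)\coloneqq\sum_{j=0}^{p-1}x^j\in\IF_p[x]$ and note that $S$ is the image of $xf'(x)$ under the substitution $x\mapsto g$, since $xf'(x)=\sum_{j=1}^{p-1}jx^j$. The point is that $f$ is very simple over $\IF_p$: from $(x-1)f(x)=x^p-1=(x-1)^p$ one gets $f(x)=(x-1)^{p-1}$ in the integral domain $\IF_p[x]$. Differentiating the relation $(x-1)f(x)=(x-1)^p$ and using $p\equiv 0$ gives $f(x)+(x-1)f'(x)=0$, hence $f'(x)=-(x-1)^{p-2}$. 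Writing $t\coloneqq x-1$, so that $x=1+t$, one then finds $xf'(x)=-(1+t)\,t^{p-2}=-t^{p-2}-t^{p-1}$, and therefore in $\IF_p[G]$ the clean identity $S=-\bar\tau^{p-2}-\bar\tau^{p-1}$.

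The three cases of the statement are then immediate from the nilpotency degree of $\bar\tau$ on $N_q$: both $\bar\tau^{p-2}$ and $\bar\tau^{p-1}$ vanish when $q\leq p-2$; only $\bar\tau^{p-1}=\bar\tau^{q}$ vanishes when $q=p-1$, leaving $-\bar\tau^{q-1}$; and both survive when $q=p$, giving $-\bar\tau^{q-1}-\bar\tau^{q-2}$. I expect the only genuine obstacle to be the polynomial identity $f(x)=(x-1)^{p-1}$ together with the differentiation step, where one must be careful that the formal derivative is taken over $\IF_p$ (so $x^p-1$ contributes nothing) and that the cancellation of $(x-1)$ is legitimate because $\IF_p[x]$ has no zero divisors. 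A more pedestrian alternative would be to expand $S=\sum_{j=1}^{p-1}j(1+\bar\tau)^j=\sum_{k}\big(\sum_{j}j\binom{j}{k}\big)\bar\tau^k$ and evaluate the coefficient sums $\sum_{j=k}^{p-1}j\binom{j}{k}\bmod p$ directly, but the generating-function argument is shorter and sidesteps these delicate binomial congruences.
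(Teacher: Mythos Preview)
Your argument is correct and reaches the same universal identity $S=-\bar\tau^{p-1}-\bar\tau^{p-2}$ in $\IF_p[G]$ that the paper obtains, after which the casework on $N_q$ is identical. The route, however, is genuinely different. The paper expands $g^i=(1+\bar\tau)^i$ binomially, interchanges the sums, and then proves by induction the combinatorial identity
\[
\sum_{k=0}^{p-1-j}(j+k)\binom{j+k}{j}=j\binom{p}{j+1}+(j+1)\binom{p}{j+2},
\]
so that reduction modulo $p$ kills all terms except those with $j=p-2$ and $j=p-1$. This is precisely the ``pedestrian alternative'' you mention in your last sentence. Your generating-function approach --- observing that $f(x)=\sum_{j=0}^{p-1}x^j=(x-1)^{p-1}$ in $\IF_p[x]$ and that $S$ is the image of $xf'(x)$ --- replaces that binomial identity by a single formal differentiation and a cancellation in the integral domain $\IF_p[x]$. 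Your version is shorter and conceptually cleaner; the paper's version has the minor advantage of being completely elementary (no formal derivatives, just finite sums), and it makes visible exactly which binomial coefficients survive modulo~$p$. Either way the key content is the same identity in $\IF_p[G]$.
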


\begin{proof} One computes:
\begin{align*}
\sum_{i=1}^{p-1}ig^i&=\sum_{i=1}^{p-1}\sum_{j=0}^i i\binom{i}{j}\bar\tau^j
=\sum_{j=0}^{p-1}\left(\sum_{i=j}^{p-1}i\binom{i}{j}\right)\bar\tau^j
=\sum_{j=0}^{p-1}\left(\sum_{k=0}^{p-1-j}(j+k)\binom{j}{j+k}\right)\bar\tau^j\\
&=\sum_{j=0}^{p-1}\left(j\binom{p}{j+1}+(j+1)\binom{p}{j+2}\right)\bar\tau^j
\end{align*}
where the last equality follows from an easy induction on $p$ (for any integer $p$). By reduction modulo $p$,
all binomial coefficients $\binom{p}{\ell}$ vanish for $1\leq\ell\leq p-1$ so:
$$
\sum_{i=1}^{p-1}ig^i=-\bar\tau^{p-1}-\bar\tau^{p-2}.
$$
Since $\bar\tau^q=0$ on $N_q$, the result follows.
\end{proof}

In the special case $M=\IF_p$, in case {\it (iii)} one obtains $\alpha\cup\beta=\alpha\beta$ if $p=2$ and $\alpha\cup\beta=0$ if $p\geq 3$. It follows that, as a  graded commutative algebra:
$$
H^*(G;\IF_p)\cong\begin{cases}
\IF_p[u] & \text{if } p=2,\\
\Lambda^*(s)\otimes_{\IF_p}\IF_p[t] & \text{if } p\geq 3,
\end{cases}
$$
where $\deg(u)=1$, $\deg(s)=1$, $\deg(t)=2$ and $\Lambda^*(s)$ denotes the exterior algebra over $\IF_p$ generated by $s$ (see \cite[Proposition~1.4.2]{AlldayPuppe}). 

\begin{proposition} \label{prop:cohGmodule}
$H^*(G;N_p)\cong N_p^G\cong\IF_p$ is a trivial $H^*(G;\IF_p)$-module. For~${q<p}$, $H^*(G;N_q)$ is a free $H^*(G;\IF_p)$-module generated by $H^0(G;N_q)\cong\IF_p$.
\end{proposition}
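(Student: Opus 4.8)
The plan is to prove the two assertions separately, drawing the additive input from Lemma~\ref{lem:cohGadditif} and the multiplicative input from the cup-product formulas (i)--(iii) together with Lemma~\ref{lem:calcul}. For $N_p$ I would argue purely by degree. Lemma~\ref{lem:cohGadditif}(2) gives $H^i(G;N_p)=0$ for $i\geq 1$, so $H^*(G;N_p)$ is concentrated in degree $0$, where it coincides with $H^0(G;N_p)=\ker\bar\tau\cong\IF_p$. Every homogeneous element of $H^*(G;\IF_p)$ of positive degree raises cohomological degree and therefore maps the only nonzero group $H^0(G;N_p)$ into $H^{>0}(G;N_p)=0$; thus the augmentation ideal of $H^*(G;\IF_p)$ annihilates $H^*(G;N_p)$, which is consequently a trivial module. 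This settles the first claim.

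For $q<p$ I would test freeness through the $H^*(G;\IF_p)$-linear structure map $\mu\colon H^*(G;\IF_p)\to H^*(G;N_q)$, $\alpha\mapsto\alpha\cup e$, where $e$ generates $H^0(G;N_q)=\ker\bar\tau=\langle v_1\rangle$. By Lemma~\ref{lem:cohGadditif}(1) both source and target are one-dimensional in every degree, so $\mu$ is an isomorphism of $H^*(G;\IF_p)$-modules precisely when it is surjective degreewise, that is, when $\alpha\cup e$ generates $H^i(G;N_q)$ for a generator $\alpha$ of $H^i(G;\IF_p)$. The even-degree generators are the powers $t^k$: by formula~(i) multiplication by $t$ is represented on cochains by the identity of $N_q$, and because $\im\bar\sigma=0$ for $q<p$ this exhibits $t\cup-\colon H^i(G;N_q)\to H^{i+2}(G;N_q)$ as an isomorphism for every $i\geq 0$. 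In particular $t^k\cup e$ is a generator of $H^{2k}(G;N_q)=\ker\bar\tau/\im\bar\sigma$, so the even part of $H^*(G;N_q)$ is a free $\IF_p[t]$-module on $e$ and $\mu$ is already onto in every even degree.

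The odd-degree generation is where the real content lies, and I expect it to be the main obstacle. For $p=2$ there is nothing to do: the only index with $q<p$ is $q=1$, for which $N_1\cong\IF_2$ is trivial and $\mu$ is the tautological identity $\IF_2[u]\to\IF_2[u]$. For $p\geq 3$ one must decide whether the degree-one generator $s$ carries the even classes $t^k\cup e$ onto generators of the odd-degree groups $H^{2k+1}(G;N_q)=\ker\bar\sigma/\im\bar\tau$. By formula~(ii) the class $s\cup(t^k\cup e)$ is represented by $g\cdot v_1=v_1$, while by formula~(iii) the action of $s$ on an odd class is governed by the operator $g+2g^2+\cdots+(p-1)g^{p-1}$ evaluated on $N_q$; Lemma~\ref{lem:calcul} identifies this operator as $0$ for $q\leq p-2$ and as $-\bar\tau^{q-1}$ for $q=p-1$. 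The crux is therefore to pin down these representatives modulo $\im\bar\tau$ and $\im\bar\sigma$ and to check that they exhaust the odd-degree groups, so that $\mu$ becomes surjective---and hence, by the rank-one dimension count, an isomorphism---in every degree. This is the delicate bookkeeping step on which the statement turns; once degreewise surjectivity is secured, freeness of $H^*(G;N_q)$ over $H^*(G;\IF_p)$ and the identification of the free generator with $H^0(G;N_q)$ follow formally.
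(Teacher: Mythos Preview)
Your treatment of $q=p$ is correct and coincides with the paper's: the module is concentrated in degree~$0$, so the augmentation ideal kills it. Your handling of $p=2$ and of the even degrees for $p\geq 3$ (multiplication by $t$ is the identity on cochains, hence an isomorphism $H^i\to H^{i+2}$) is also correct and matches the paper's use of case~(i).

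The gap is precisely where you hedge. You observe that $s\cup(t^k\cup e)$ is represented by $v_1$ via formula~(ii); but for $q\geq 2$ one has $v_1=\bar\tau v_2\in\im\bar\tau$, so this representative is a coboundary and $s\cup e=0$ in $H^1(G;N_q)$. Thus $\mu$ is \emph{not} surjective in odd degrees, and $H^*(G;N_q)$ is \emph{not} free of rank one over $H^*(G;\IF_p)$ for any $2\leq q\leq p-1$. (A free rank-one module on a degree-zero generator would require $s\cdot e_0\neq 0$; a generator in positive degree is excluded since $H^0\neq 0$.) The paper's own proof checks only that case~(iii) yields zero and declares ``the result follows''; it overlooks the same obstruction in case~(ii) and is not a valid argument either. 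What the paper actually needs downstream, in Corollary~\ref{cor:dimtor} and Proposition~\ref{prop:XGTor}, is only that $H^*(G;N_q)$ is free over the polynomial subring $R$: of rank~$1$ when $p=2$ and of rank~$2$ when $p\geq 3$, with generators in degrees $0$ and $1$. Your even-degree computation already establishes this weaker (and correct) statement, since multiplication by $t$ gives isomorphisms $H^i\to H^{i+2}$ and hence $H^*(G;N_q)\cong R\cdot e_0\oplus R\cdot e_1$.
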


\begin{proof}
This follows from Lemma~\ref{lem:cohGadditif} and the discussion above. The cases $q=p$  or $p=2$ are clear. In the case $p\geq 3$ and $q<p$, for $\alpha\in H^r(G;\IF_p)$ and $\beta\in H^s(G;M)$ with $r$ odd and $s$ odd, following the notation used in the proof of Lemma~\ref{lem:cohGadditif}, $\beta$~can be represented by a class $v_q$ with $\bar\sigma v_q=0$. Since $\bar\sigma=\bar\tau^{p-1}$, using Lemma~\ref{lem:calcul} one gets $\alpha\cup\beta=0$ in case ${\it (iii)}$. 
The result follows.
\end{proof}

We denote by $R$ the polynomial part of $H^*(G;\IF_p)$ (that is: $R=\IF_p[u]$ for $p=2$ and $R=\IF_p[t]$ for $p\geq 3$).
We consider $\IF_p$ as a $R$-module by evaluating at zero (setting $u=0$ for $p=2$ and $t=0$ for $p\geq 3$). For any $1\leq q\leq p$, we consider $H^*(G;N_q)$ as a $R$-module by the inclusion $R\hookrightarrow H^*(G;\IF_p)$.

\begin{corollary}\label{cor:dimtor}\text{}
\begin{enumerate}
\item For $p=2$ and $q<p$, one has $\dim_{\IF_p}\Tor_0^R(H^*(G;N_q),\IF_p)=1$ and for $i>0$, $\Tor_i^R(H^*(G;N_q),\IF_p)=0$.

\item For $p\geq 3$ and $q<p$, one has $\dim_{\IF_p}\Tor_0^R(H^*(G;N_q),\IF_p)=2$ and for $i>0$, $\Tor_i^R(H^*(G;N_q),\IF_p)=0$.

\item For $p\geq 2$, one has: 
$$
\dim_{\IF_p}\Tor_0^R(H^*(G;N_p),\IF_p)=1=\dim_{\IF_p}\Tor_1^R(H^*(G;N_p),\IF_p)=1,
$$
and for $i\geq 2$, $\Tor_i^R(H^*(G;N_p),\IF_p)=0$.
\end{enumerate}
\end{corollary}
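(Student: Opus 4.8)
The plan is to compute each $\Tor$ module directly from an explicit free resolution of $\IF_p$ as an $R$-module, using the $R$-module structure of $H^*(G;N_q)$ already determined in Proposition~\ref{prop:cohGmodule}. Since $R$ is a polynomial ring in a single variable ($R=\IF_p[u]$ when $p=2$ and $R=\IF_p[t]$ when $p\geq 3$), the trivial module $\IF_p=R/(u)$ (resp. $R/(t)$) admits the short free resolution
\begin{equation*}
0\lra R\xrightarrow{\;\cdot u\;}R\lra\IF_p\lra 0
\end{equation*}
(and similarly with $t$). Consequently $\Tor_i^R(M,\IF_p)$ vanishes for all $i\geq 2$ and any $R$-module $M$, and for $i=0,1$ it is computed as the cokernel and kernel, respectively, of multiplication by the polynomial generator on $M$. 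So the whole corollary reduces to understanding how $u$ (resp. $t$) acts on each $H^*(G;N_q)$.

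First I would treat the case $q<p$. By Proposition~\ref{prop:cohGmodule}, $H^*(G;N_q)$ is a \emph{free} $R$-module generated by $H^0(G;N_q)\cong\IF_p$. For $p=2$ the generator has rank one over $R$, so $M\cong R$ as an $R$-module; then $\cdot u$ is injective with cokernel $\IF_p$, giving $\Tor_0=\IF_p$ (dimension $1$) and $\Tor_i=0$ for $i\geq 1$. For $p\geq 3$ the algebra $H^*(G;\IF_p)=\Lambda^*(s)\otimes_{\IF_p}\IF_p[t]$ is free of rank two over $R=\IF_p[t]$, with basis $\{1,s\}$; since $H^*(G;N_q)$ is $R$-free generated by its degree-zero part, it is isomorphic to $R\oplus R$ (the two summands being the ``$1$'' and ``$s$'' components). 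Multiplication by $t$ is therefore injective with two-dimensional cokernel, yielding $\dim_{\IF_p}\Tor_0=2$ and $\Tor_i=0$ for $i\geq 1$.

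Next I would handle $q=p$. Here Proposition~\ref{prop:cohGmodule} says $H^*(G;N_p)\cong\IF_p$ concentrated in degree zero, a \emph{trivial} $R$-module, so multiplication by the polynomial generator is the zero map. The short resolution above then gives both $\Tor_0^R(\IF_p,\IF_p)$ and $\Tor_1^R(\IF_p,\IF_p)$ equal to $\IF_p$ (each one-dimensional), with $\Tor_i=0$ for $i\geq 2$; this is independent of whether $p=2$ or $p\geq 3$, matching part~(3).

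The only genuinely delicate point is verifying that $H^*(G;N_q)$ is free of the asserted rank over $R$ rather than merely finitely generated, since the $\Tor$ dimensions hinge on multiplication by the generator being injective (so that $\Tor_1$ vanishes for $q<p$). This freeness is exactly the content of Proposition~\ref{prop:cohGmodule}, so the main work has already been done; what remains is to read off the ranks ($1$ for $p=2$, $2$ for $p\geq 3$) by counting the $R$-basis of $H^*(G;\IF_p)$ and to confirm the degenerate behavior in the trivial case $q=p$. With these observations in place the three parts follow immediately from the two-term resolution, so I expect no serious obstacle beyond correctly bookkeeping the rank over $R$ in the $p\geq 3$ case.
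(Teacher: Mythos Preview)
Your proof is correct and follows essentially the same approach as the paper: both use the two-term free resolution $0\to R\to R\to\IF_p\to 0$ to kill $\Tor_i$ for $i\geq 2$, invoke the freeness of $H^*(G;N_q)$ over $R$ from Proposition~\ref{prop:cohGmodule} to handle $q<p$ (counting rank $1$ or $2$ according to whether $p=2$ or $p\geq 3$), and compute directly for the trivial $R$-module $H^*(G;N_p)\cong\IF_p$. The only cosmetic difference is that you phrase the rank-$2$ case as $R\oplus R$ with basis $\{1,s\}$, while the paper says the module is generated over $R$ by $v$ and $sv$; these are the same observation.
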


\begin{proof}\text{}
There is a length 2 projective resolution of $\IF_p$ as a $R$-module given by:
$$
0\lra R\overset{\phi}{\lra} R\lra\IF_p\lra 0
$$
where $\phi\colon R\to R$ is the multiplication by $u$ for $p=2$ and by $t$ for $p\geq 3$, so $\Tor_i^R(H^*(G;N_q),\IF_p)=0$ for $i\geq 2$ and $q\leq p$.
\par\noindent (a) Assume that $q<p$. By Proposition~\ref{prop:cohGmodule}, $H^*(G;N_q)$ is a free {$R$-module} so $\Tor_i^R(H^*(G;N_q),\IF_p)=\{0\}$ for $i\geq 1$. Recall that: 
$$
\Tor_0^R(H^*(G;N_q),\IF_p)\cong H^*(G;N_q)\otimes_R\IF_p.
$$
For $p=2$, $H^*(G;N_q)$ is generated by any non zero element $v\in H^0(G;N_q)$ as a $R$-module so $\dim_{\IF_p} H^*(G;N_q)\otimes_R\IF_p=1$; for $p\geq 3$, $H^*(G;N_q)$ is again generated by any non zero $v\in H^*(G;N_q)$ as a $H^*(G;\IF_p)$-module, so is generated by $v$ and~$sv$ as a  $R$-module: this gives $\dim_{\IF_p} H^*(G;N_q)\otimes_R\IF_p=2$. 
\par\noindent (b) Take $q=p$. From the length 2 resolution of $\IF_p$ as a  $R$-module, using  Proposition~\ref{prop:cohGmodule} one gets: 
$$
\Tor_1^R(H^*(G;N_p),\IF_p)\cong \ker(\phi\colon H^*(G;N_p)\to H^*(G;N_p))=H^*(G;N_p).
$$ 
By Lemma~\ref{lem:cohGadditif} this space is one-dimensional so: 
$$
\Tor_0^R(H^*(G;N_p),\IF_p)\cong H^*(G;N_p)\otimes_R\IF_p\cong \IF_p.
$$
\end{proof}


\section{Equivariant cohomology}

\subsection{Basic facts on equivariant cohomology}

Let $\EG\to \BG$ be a universal $G$-bundle in the category of CW-complexes. Denote by $X_G\coloneqq \EG\times_G X$ the orbit space for the diagonal action of $G$ on the product $\EG\times X$ and $f\colon X_G\to \BG$ the map induced by the projection onto the first factor. The map $f$ is a locally trivial fibre bundle with typical fibre $X$ and structure group $G$. The \emph{equivariant cohomology} of the pair $(X,G)$ with coefficients in $\IF_p$ is defined by ${H^*_G(X,\IF_p)\coloneqq H^*(X_G,\IF_p)}$, naturally endowed with a graded $H^*(\BG,\IF_p)$-module structure.
 Note that there is an isomorphism of graded algebras $H^*(\BG,\IF_p)\cong H^*(G;\IF_p)$.
The Leray--Serre spectral sequence associated to the map $f$ gives a spectral sequence converging to the equivariant cohomology with coefficients in $\IF_p$:
$$
E_2^{r,s}\coloneqq H^r(G;H^s(X,\IF_p))\Longrightarrow H^{r+s}_G(X,\IF_p).
$$

\begin{remark}
By assumption $X$ has the homotopy type of a finite $G$-CW-complex.
Denote by $C^*(X)$ the cellular cochain complex of $X$ with coefficients in
 $\IF_p$. The spaces $C^s(X)$ 
are finitely dimensional $\IF_p$-vector spaces. 
Recall that $\varepsilon\colon F_*\to\IZ$ denotes 
the projective resolution of $\IZ$ as a trivial $\IZ[G]$-module, and define the double 
complex $\beta_G^{r,s}(X)\coloneqq \Hom_G(F_r,C^s(X))$. 
As $C^*(X)$ is quasi-isomorphic to $R\Gamma(X,\IF_p)$ in the derived category of $G$-modules, the cohomology of the total complex, the cohomology of the 
total complex $\Tot\beta_G(X)$ computes the equivariant 
cohomology (see Allday--Puppe~\cite[Theorem~1.2.8]{AlldayPuppe}): $H^*_G(X,\IF_p)\cong H^*(\Tot\beta_G(X))$. This yields a concrete description of the 
first quadrant spectral sequence converging to the  equivariant cohomology.
\end{remark}

\subsection{Cohomology of the fixed locus}

Recall that $R$ denotes the polynomial part of $H^\ast(G;\IF_p)$. We prove the following formula (see Allday--Puppe~\cite{AlldayPuppe} for related results):

\begin{proposition}\label{prop:XGTor} For $p\geq 2$ one has:
$$
h^\ast(X^G,\IF_p)=\nu\cdot\left(\dim_{\IF_p}\Tor^R_0(H^*_G(X,\IF_p),\IF_p)-\dim_{\IF_p}\Tor^R_1(H^*_G(X,\IF_p),\IF_p)\right)
$$
with $\nu=1$ for $p=2$ and $\nu=\frac{1}{2}$ for $p\geq 3$.
\end{proposition}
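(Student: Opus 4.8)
The plan is to reduce both sides of the identity to a single quantity, namely the rank of the free part of the finitely generated graded $R$-module $M\coloneqq H^*_G(X,\IF_p)$, and then to match these ranks through the localisation theorem. Since $X$ has the homotopy type of a finite CW-complex, $H^*(X,\IF_p)$ is finite-dimensional, so $M$ is finitely generated over $H^*(G;\IF_p)$ and hence over $R$, which is a polynomial ring in one variable over $\IF_p$ (with generator $\phi=u$ for $p=2$ and $\phi=t$ for $p\geq 3$). As $R$ is a graded principal ideal domain whose only homogeneous primes are $(0)$ and $(\phi)$, the module $M$ decomposes as a direct sum of shifted free modules $R$ and shifted cyclic torsion modules $R/(\phi^{n_j})$, the torsion being precisely the $\phi$-power torsion.

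First I would compute the difference of Tor-dimensions summand by summand, using the length-two resolution $0\to R\xrightarrow{\phi}R\to\IF_p\to 0$ used in Corollary~\ref{cor:dimtor}. A free summand $R$ yields $\dim_{\IF_p}\Tor^R_0=1$ and $\Tor^R_1=0$, whereas each torsion summand $R/(\phi^{n_j})$ yields one-dimensional $\Tor^R_0$ and $\Tor^R_1$ (the cokernel and the kernel of multiplication by $\phi$). The torsion contributions therefore cancel in the difference, and summing over all summands gives
$$
\dim_{\IF_p}\Tor^R_0(M,\IF_p)-\dim_{\IF_p}\Tor^R_1(M,\IF_p)=\rank_R M,
$$
the number of free generators of $M$.

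Next I would apply the localisation theorem as formulated in Allday--Puppe~\cite{AlldayPuppe}. Inverting the multiplicative set $S=\{\phi^n\}_{n\geq 0}$ (for $p\geq 3$ the class $s$ is nilpotent, $s^2=0$, so it need not be inverted), the restriction map induces an isomorphism $S^{-1}H^*_G(X,\IF_p)\cong S^{-1}H^*_G(X^G,\IF_p)$ of $S^{-1}R$-modules. Localising at $S$ annihilates exactly the $\phi$-power torsion and preserves the free part, so the two equivariant cohomologies have the same $R$-rank; in particular $\rank_R M=\rank_R H^*_G(X^G,\IF_p)$. Finally I would evaluate the right-hand rank directly: since $G$ acts trivially on $X^G$, one has $(X^G)_G=\BG\times X^G$, whence $H^*_G(X^G,\IF_p)\cong H^*(\BG,\IF_p)\otimes_{\IF_p}H^*(X^G,\IF_p)$ by the K\"unneth formula. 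As $H^*(\BG,\IF_p)\cong H^*(G;\IF_p)$ is free over $R$ of rank $1$ for $p=2$ and of rank $2$ (on the generators $1$ and $s$) for $p\geq 3$, this $R$-module is free of rank $\nu^{-1}h^*(X^G,\IF_p)$. Equating the two expressions for the rank yields the stated formula.

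The step I expect to be most delicate is the localisation argument: one must check that $M$ is finitely generated (so the Tor groups are finite-dimensional), that the torsion submodule of $M$ over $R$ coincides with the $\phi$-power torsion so that inverting the single generator $\phi$ kills all of it, and that the localisation isomorphism genuinely preserves the free rank. Once these points are secured, the algebraic Tor computation and the K\"unneth identification of $H^*_G(X^G,\IF_p)$ are routine.
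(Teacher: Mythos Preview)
Your argument is correct and follows the same conceptual outline as the paper's proof: both identify the alternating sum of Tor-dimensions with the $R$-rank of $H^*_G(X,\IF_p)$ and then invoke localisation to compare with the fixed locus. The implementations differ slightly. You use the structure theorem for graded modules over the graded PID $R$ to read off the rank directly, then apply the Borel localisation isomorphism $S^{-1}H^*_G(X,\IF_p)\cong S^{-1}H^*_G(X^G,\IF_p)$ and compute the rank of the right-hand side by K\"unneth. The paper instead takes a minimal free resolution $0\to L_1\to L_0\to H^*_G(X,\IF_p)\to 0$, so that $\rank_R L_i=\dim_{\IF_p}\Tor^R_i$, and evaluates at a \emph{closed} point $\IF_{p,\alpha}=R/(T-\alpha)$ with $\alpha\neq 0$: exactness of $-\otimes_R\IF_{p,\alpha}$ gives $\dim H^*_G(X,\IF_p)\otimes_R\IF_{p,\alpha}=\rank L_0-\rank L_1$, and an Allday--Puppe result identifies this fibre with $H^*(X^G,\IF_p)$ (tensored with $\Lambda(s)$ when $p\geq 3$). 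Your route via the generic point and the structure theorem is perhaps the more transparent of the two; the paper's evaluation-at-a-point formulation has the advantage of quoting a single ready-made theorem from \cite{AlldayPuppe} that packages the localisation and the K\"unneth step together.
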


\begin{proof}
The graded $R$-module $H^\ast_G(X,\IF_p)$ is of finite type so it admits a minimal free resolution~\cite[Proposition~A.4.12]{AlldayPuppe}:
$$
0\lra L_1\lra L_0\lra H^\ast_G(X,\IF_p)\lra 0
$$
such that $\rank_R L_i=\dim_{\IF_p} \Tor^R_i(H^\ast_G(X,\IF_p),\IF_p)$. Write $R=\IF_p[T]$ (with $T=u$ 
of degree one if $p=2$ and $T=t$ of degree two if $p\geq 3$). For $\alpha\in\IF_p$, 
define $\IF_{p,\alpha}\coloneqq R/(T-\alpha)$. This is consistent with the previous description 
$\IF_p\cong\IF_{p,0}$ as an $R$-module.
 For $\alpha\neq 0$, the functor $-\otimes_R\IF_{p,\alpha}$ is exact~\cite[Lemma~A.7.2]{AlldayPuppe} so:
\begin{align*}
\dim_{\IF_p} H^\ast_G(X,\IF_p)\otimes_R\IF_{p,\alpha}&=\dim_{\IF_p} L_0\otimes_R\IF_{p,\alpha}-\dim_{\IF_p} L_1\otimes_R\IF_{p,\alpha}\\
&=\rank_RL_0-\rank_R L_1\\
&=\dim_{\IF_p}\Tor^R_0(H^\ast_G(X,\IF_p),\IF_p)\\
&\qquad-\dim_{\IF_p}\Tor^R_1(H^\ast_G(X,\IF_p),\IF_p). 
\end{align*}
For $\alpha\neq 0$, one has
 $H^\ast_G(X,\IF_p)\otimes_R\IF_{p,\alpha}\cong H^\ast(\beta_G(X)\otimes_R\IF_{p,\alpha})$
(this cohomology is computed with the total differential). We now use the following analogue of the localisation theorem in equivariant cohomology~\cite[Theorem~1.3.5, Theorem~1.4.5]{AlldayPuppe}: for $\alpha\neq 0$, one has 
$$
H^\ast(\beta_G(X)\otimes_R\IF_{p,\alpha})\cong
\begin{cases}
H^*(X^G,\IF_p)&\text{if } p=2\\
H^*(X^G,\IF_p)\otimes_{\IF_p}\Lambda(s)&\text{if } p\geq 3.
\end{cases}
$$
The result follows.
\end{proof}

If the spectral sequence of equivariant cohomology with coefficients in $\IF_p$ degenerates at the $E_2$-term, it induces
 an isomorphism of graded $H^*(G;\IF_p)$-modules:
$$
H^*(G;H^*(X,\IF_p))\cong H^*_G(X,\IF_p).
$$
Using Corollary~\ref{cor:dimtor}, Proposition~\ref{prop:XGTor} gives immediately:

\begin{corollary}\label{cor:XGlq} If the spectral sequence of equivariant cohomology with coefficients in $\IF_p$ degenerates at the $E_2$-term, then for $p\geq 2$ one has:
$$
h^\ast(X^G,\IF_p)=\sum_{1\leq q<p}\ell^\ast_q(X).
$$
\end{corollary}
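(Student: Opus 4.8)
The plan is to start from the degeneration hypothesis, which by the observation just before the statement yields an isomorphism of graded $H^*(G;\IF_p)$-modules, and in particular of $R$-modules,
$$
H^*_G(X,\IF_p)\cong H^*(G;H^*(X,\IF_p)).
$$
First I would decompose the coefficient module degree by degree. By the discussion in Section~\ref{sec:terminology}, each $G$-module $H^s(X,\IF_p)$ splits as $\bigoplus_{q=1}^p N_q^{\oplus\ell_q^s(X)}$. Since the functor $H^*(G;-)$ is additive, applying it in each degree and summing over $s$ gives, as graded $R$-modules,
$$
H^*_G(X,\IF_p)\cong\bigoplus_{q=1}^p H^*(G;N_q)^{\oplus\ell_q^*(X)},\qquad \ell_q^*(X)=\sum_{s\geq 0}\ell_q^s(X).
$$

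Next, since $\Tor_i^R(-,\IF_p)$ commutes with direct sums, I would read off
$$
\dim_{\IF_p}\Tor_i^R(H^*_G(X,\IF_p),\IF_p)=\sum_{q=1}^p \ell_q^*(X)\,\dim_{\IF_p}\Tor_i^R(H^*(G;N_q),\IF_p),
$$
and substitute the values from Corollary~\ref{cor:dimtor}. For $i=0$ each block $q<p$ contributes $1$ if $p=2$ and $2$ if $p\geq 3$, while the block $q=p$ contributes $1$; for $i=1$ only the block $q=p$ contributes, and it contributes $1$; all higher $\Tor$ vanish.

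Finally I would feed these two dimensions into Proposition~\ref{prop:XGTor}. For $p=2$ the difference $\dim_{\IF_p}\Tor_0^R-\dim_{\IF_p}\Tor_1^R$ equals $\ell_1^*(X)$, and the prefactor $\nu=1$ yields $h^*(X^G,\IF_p)=\ell_1^*(X)$. For $p\geq 3$ the $q=p$ contributions cancel in the difference, leaving $2\sum_{q<p}\ell_q^*(X)$, and $\nu=\tfrac12$ converts this into $\sum_{q<p}\ell_q^*(X)$; in both cases this is exactly $\sum_{1\leq q<p}\ell_q^*(X)$, as claimed. The computation is mechanical once the block decomposition is in place, so the only point requiring care---and the main potential obstacle---is the compatibility of the degeneration isomorphism with the $R$-module structure, which is what allows $\Tor^R$ to be computed block by block; note also that only the total multiplicities $\ell_q^*(X)$ enter, the grading by $s$ becoming irrelevant after summation precisely because $\Tor$ is additive.
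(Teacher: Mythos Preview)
Your proof is correct and follows exactly the same approach as the paper, which simply states that the result follows immediately from Corollary~\ref{cor:dimtor} and Proposition~\ref{prop:XGTor}; you have spelled out the details of that immediate deduction (block decomposition, additivity of $\Tor$, and the case split $p=2$ versus $p\geq 3$). The caveat you flag about the $R$-module compatibility of the degeneration isomorphism is exactly the content of the sentence preceding the corollary in the paper, so you are invoking precisely what the paper provides.
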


This formula can be stated differently, using only the parameter 
$\ell^\ast_p(X)$, that will appear to be the most important in the sequel: 

\begin{corollary}
If the spectral sequence of equivariant cohomology with coefficients in $\IF_p$ 
degenerates at the $E_2$-term, then for $p\geq 2$ one has:
$$
h^\ast(X^G,\IF_p)=\dim_{\IF_p}H^\ast(X,\IF_p)^G-\ell^\ast_p(X).
$$
\end{corollary}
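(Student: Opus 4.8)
The plan is to deduce this restatement directly from Corollary~\ref{cor:XGlq}, which under the degeneracy hypothesis gives $h^\ast(X^G,\IF_p)=\sum_{1\leq q<p}\ell^\ast_q(X)$. It therefore suffices to establish the purely algebraic identity
$$
\sum_{1\leq q<p}\ell^\ast_q(X)=\dim_{\IF_p}H^\ast(X,\IF_p)^G-\ell^\ast_p(X),
$$
equivalently $\dim_{\IF_p}H^\ast(X,\IF_p)^G=\sum_{1\leq q\leq p}\ell^\ast_q(X)$. This no longer involves the fixed locus at all: it is a statement about the $G$-module $H^\ast(X,\IF_p)$ alone, valid regardless of any geometry.

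First I would reduce to a single Jordan block. For each degree $k$ the decomposition $H^k(X,\IF_p)\cong\bigoplus_{q=1}^p N_q^{\oplus\ell^k_q(X)}$ of Section~\ref{sec:terminology} gives $\dim_{\IF_p}H^k(X,\IF_p)^G=\sum_{q=1}^p\ell^k_q(X)\cdot\dim_{\IF_p}N_q^G$, since taking $G$-invariants commutes with finite direct sums. Thus everything comes down to computing $\dim_{\IF_p}N_q^G$ for $1\leq q\leq p$.

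Next I would compute these invariants. An element of $N_q$ is $G$-invariant precisely when it is annihilated by $g-1$, that is, when it lies in $\ker(\bar\tau)$. But this kernel was already identified in the proof of Lemma~\ref{lem:cohGadditif}: in the basis $v_1,\dots,v_q$ one has $\ker(\bar\tau)=\langle v_1\rangle$ for every $q\leq p$. Hence $\dim_{\IF_p}N_q^G=1$ uniformly, and substituting gives $\dim_{\IF_p}H^k(X,\IF_p)^G=\sum_{q=1}^p\ell^k_q(X)$. Summing over $k$ yields $\dim_{\IF_p}H^\ast(X,\IF_p)^G=\sum_{q=1}^p\ell^\ast_q(X)$; subtracting $\ell^\ast_p(X)$ and invoking Corollary~\ref{cor:XGlq} finishes the proof.

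Since the argument is a short bookkeeping computation, there is no genuine obstacle; the only point requiring care is the uniformity of $\dim_{\IF_p}N_q^G=1$ across all $q$, including the free block $q=p$ where $N_p\cong\IF_p[G]$ and one might expect different behaviour. The regular representation of a cyclic group nonetheless has a one-dimensional invariant subspace, consistent with $\ker(\bar\tau)=\langle v_1\rangle$, so the formula holds without exception.
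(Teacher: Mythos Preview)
Your proof is correct and follows essentially the same approach as the paper: both invoke Corollary~\ref{cor:XGlq} and the fact that each Jordan block $N_q$ has a one-dimensional invariant subspace to obtain $\dim_{\IF_p}H^\ast(X,\IF_p)^G=\sum_{1\leq q\leq p}\ell^\ast_q(X)$. Your version is simply more explicit about why $\dim_{\IF_p}N_q^G=1$, citing the kernel computation from the proof of Lemma~\ref{lem:cohGadditif}.
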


\begin{proof}
Since each Jordan block of $H^\ast(X,\IF_p)$ contains a one-dimensional 
invariant subspace, one gets $\dim_{\IF_p} H^*(X,\IF_p)^G=\sum_{1\leq q\leq p}\ell^\ast_q(X)$.
One conludes by using Corollary~\ref{cor:XGlq}.
\end{proof}

\subsection{Degeneracy condition of the spectral sequence}
\label{ss:degE2}
Even under very nice conditions, one can not expect the collapsing of the spectral sequence in general.
For instance, take $X$  a non-singular, real projective algebraic variety and $g\colon X(\IC)\to X(\IC)$
the involution of complex conjugation, $G=\{1,g\}$ the order two group acting on $X(\IC)$. Then $X$ is
called a \emph{GM-variety} if the spectral sequence of equivariant cohomology with $E_2^{r,s}=H^r(G,H^s(X(\IC),\IF_2))$
degenerates. See Krasnov~\cite{Krasnov1,Krasnov2} for some examples of $GM$ and non-$GM$ varieties. In this section, we
prove some degeneracy conditions that will be useful for certain symplectic holomorphic varieties.

\begin{proposition}\label{prop:degE2K3} Assume that $\dim_\IR X=4$ and $H^\odd(X,\IF_p)=0$. If $X$ has a fixed point for the action of $G$, then the spectral sequence of equivariant cohomology with coefficients in $\IF_p$ degenerates at the $E_2$-term.
\end{proposition}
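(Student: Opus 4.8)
The plan is to analyze the three nonzero rows of the $E_2$-page and show that every differential vanishes, using the fixed point to control the bottom row and Poincaré duality on the fibre to transport this control to the top row. Since $\dim_\IR X=4$ and $H^\odd(X,\IF_p)=0$, the page $E_2^{r,s}=H^r(G;H^s(X,\IF_p))$ is concentrated in the rows $s\in\{0,2,4\}$; moreover $H^4(X,\IF_p)\cong\IF_p$ carries the trivial $G$-action because $G$ preserves the orientation. A differential $d_r$ sends $(r,s)$ to $(r+r,s-r+1)$, so it can be nonzero only when source and target rows are both even, which forces $r$ to be odd. The only surviving possibilities are therefore
$$
d_3\colon E^{r,2}\to E^{r+3,0},\qquad d_3\colon E^{r,4}\to E^{r+3,2},\qquad d_5\colon E^{r,4}\to E^{r+5,0},
$$
and it remains to annihilate these three families.

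Next I would exploit the fixed point. A fixed point $x_0\in X^G$ yields a $G$-equivariant inclusion $\{x_0\}\hookrightarrow X$, hence an inclusion $\BG=(x_0)_G\hookrightarrow X_G$ which is a section of $f\colon X_G\to\BG$. Consequently $f^*\colon H^*(\BG,\IF_p)\to H^*_G(X,\IF_p)$ is split injective. Since $f^*$ factors as the bottom-row edge epimorphism $E_2^{r,0}\twoheadrightarrow E_\infty^{r,0}$ followed by the (always injective) inclusion $E_\infty^{r,0}\hookrightarrow H^r_G(X,\IF_p)$, injectivity of $f^*$ forces $E_2^{r,0}\xrightarrow{\sim}E_\infty^{r,0}$. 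Thus \emph{no} differential ever hits the bottom row, which kills the first family $d_3\colon E^{r,2}\to E^{r+3,0}$ and the third family $d_5\colon E^{r,4}\to E^{r+5,0}$ on every page.

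It remains to treat $d_3\colon E^{r,4}\to E^{r+3,2}$. As an $E_2^{\ast,0}\cong H^*(G;\IF_p)$-module, the top row $E_2^{\ast,4}$ is free of rank one, generated by the fundamental class $\omega\in E_2^{0,4}$; since $d_3$ vanishes on the bottom row for trivial degree reasons, multiplicativity gives $d_3(x\cup\omega)=\pm\,x\cup d_3\omega$, so the whole family vanishes once $d_3\omega=0$. To see this, pick any $y\in E_2^{p',2}$. Because $H^6(X,\IF_p)=0$ we have $\omega\cup y=0$, so Leibniz yields
$$
0=d_3(\omega\cup y)=d_3\omega\cup y\pm\omega\cup d_3 y=d_3\omega\cup y,
$$
the last equality because $d_3 y\in E^{p'+3,0}$ vanishes by the previous step. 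Hence $d_3\omega$ pairs trivially with all of $E_2^{\ast,2}$ under the cup product
$$
E_2^{3,2}\otimes E_2^{\ast,2}\lra E_2^{\ast,4}\cong H^\ast(G;\IF_p)
$$
induced by the Poincaré pairing $H^2(X,\IF_p)\otimes H^2(X,\IF_p)\to H^4(X,\IF_p)=\IF_p$. The crux is that this pairing is non-degenerate in the first variable: Poincaré duality makes $H^2(X,\IF_p)$ a self-dual $\IF_p[G]$-module, and the induced cup pairing on group cohomology is non-degenerate. This is the step I expect to be the real obstacle; I would verify it by decomposing $H^2(X,\IF_p)$ into the Jordan blocks $N_q$ and using the explicit description of $H^*(G;N_q)$ as a free $H^*(G;\IF_p)$-module from Lemma~\ref{lem:cohGadditif} and Proposition~\ref{prop:cohGmodule} (conceptually, it is an instance of Tate duality for the self-injective algebra $\IF_p[G]$, the $N_p$-summands contributing nothing in positive degree). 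Granting non-degeneracy, $d_3\omega=0$, so $d_3\equiv0$; then $E_3=E_2$, $d_4=0$ by parity, $E_5=E_2$, and $d_5=0$ by the section argument. All differentials vanish and the spectral sequence degenerates at $E_2$.
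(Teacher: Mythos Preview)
Your argument is correct. The one step you flag as the obstacle---non-degeneracy of the cup-product pairing $H^3(G;M)\otimes H^{p'}(G;M)\to H^{3+p'}(G;\IF_p)$ for the self-dual module $M=H^2(X,\IF_p)$---does hold: for $G$ cyclic of order $p$, Tate duality together with the $2$-periodicity of $\hat H^\ast(G;-)$ gives a perfect pairing $\hat H^3(G;M)\times\hat H^2(G;M^\ast)\to\hat H^5(G;\IF_p)\cong\IF_p$, and since $M\cong M^\ast$ via Poincar\'e duality and $\hat H^i=H^i$ in positive degrees, taking $p'=2$ already forces $d_3\omega=0$.

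The paper takes a genuinely different route. Instead of a row-by-row analysis of the differentials, it applies Deligne's degeneration criterion: the fixed point yields a section $s\colon\BG\to X_G$, whence a class $u\coloneqq s_\ast 1\in H^4(X_G,\IF_p)$ and a morphism $\RR f_\ast u\colon\RR f_\ast\IF_p\to\RR f_\ast\IF_p[4]$ in the derived category over $\BG$; under the odd-vanishing hypothesis, Deligne's argument reduces the Lefschetz-type splitting of $\RR f_\ast\IF_p$ to checking that the induced map $R^0f_\ast\IF_p\to R^4f_\ast\IF_p$ is an isomorphism, which is verified fibrewise as multiplication by the point class. Your approach is more elementary---no derived categories, only multiplicativity of the spectral sequence and Tate duality---and makes explicit exactly which differentials need to die and why. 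The paper's approach, however, scales cleanly: the eight-dimensional case (Proposition~\ref{prop:degE2Hilb2}) is handled by the same Deligne machinery with $u$ replaced by an equivariant characteristic class of degree~$4$, whereas extending your method would require controlling additional families of differentials and a correspondingly more delicate duality argument on the middle rows.
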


\begin{proof}
Let $x\in X$ be a fixed point for $G$. 
It induces a section $s\colon \BG\to X_G$ of the projection~$f\colon X_G\to \BG$. Denote by
$$
u\coloneqq s_\ast 1\in H^4(X_G,\IF_p)
$$ 
the proper push-forward of the unit in $H^\ast(\BG,\IF_p)$. We can view $u$ as a morphism $u\colon\IF_p\to\IF_p[4]$ in the derived category of sheaves of $\IF_p$-vector spaces over~$X_G$. Pushing down yields a morphism
$$
q\coloneqq \RR f_\ast u\colon \RR f_\ast\IF_p\to\RR f_\ast\IF_p[4]
$$
in the corresponding derived category of sheaves over $\BG$. From Deligne~\cite[Proposition~2.1]{Deligne} modified by the arguments 
of~\cite[Remarque~(1.9), $s=2$]{Deligne} (where we use the assumption $H^\odd(X,\IF_p)=0$) we get that 
if $q\colon R^0f_\ast\IF_p\to R^4 f_\ast\IF_p$ is an isomorphism, then $\IF_p$ satisfies the 
Lefschetz condition relative to $u$, that is: 
$$
\RR f_\ast\IF_p\cong\bigoplus_i R^i f_\ast\IF_p[-i]
$$
and the spectral sequence of equivariant cohomology with coefficients in $\IF_p$ degenerates at the $E_2$-term.

In order to show that  $q$ is an isomorphism, note that its source and target, being higher direct 
images of a constant sheaf along a locally trivial fibration, are locally constant 
sheaves. Thus it is enough to show that $q$ is an isomorphism fibre-wise. This follows from base change, as the fibre of $R^if_\ast\IF_p$ at a point $t\in \BG$ is just $H^i(X,\IF_p)$ and the fibre of the morphism $q$ at $t$ 
is the multiplication by the fundamental class $[x]\in H^4(X,\IF_p)$ of the fixed point $x$. 
\end{proof}

Let $\cF$ be a vector bundle on $X$. Recall that a \emph{$G$-linearisation of $\cF$} is given by the data of
homomorphisms $\phi_g\colon g^\ast \cF \to \cF$
for all $g \in G$ such that the cocycle condition
$\phi_h \circ h^\ast(\phi_g) = \phi_{hg}\colon g^\ast h^\ast \cF \to \cF$
is fulfilled for all $g,h\in G$. A \emph{$G$-linearised vector bundle}
is a vector bundle together with the data of a $G$-linearisation. Note that $G$-equivariant
resolutions exist, see Elagin \cite{Elagin}. Natural examples
are  the (co)tangent bundle on
 $X$ (where the $G$-linearization is given by pullback along the
action of $G$) or the sheaf of section $\cO(D)$ for any divisor $D$ on~$X$ that is globally invariant
for the action of $G$.

A $G$-linearisation on a vector bundle $\cF$ induces an ordinary $G$-action
on the \'etale space of $\cF$, which we denote by $\cF$ again, such that the
natural projection $\cF \to X$ becomes a $G$-equivariant map. We can then form
the space $\cF_G \coloneqq  \cF \times_G \EG$, which has a natural map to $X_G$,
making it canonically into a vector bundle over $X_G$. If we restrict
$\cF_G$ to a fibre of $f\colon X_G \to \BG$ (all of which are isomorphic to $X$),
it becomes the vector bundle $\cF$ over $X$ again.

If $\cF$ has the additional structure of a complex vector bundle and the
$G$-linearisa\-tion of $\cF$ is compatible with this structure, the induced
bundle $\cF_G$ inherits this structure as a complex vector bundle. Given two
$G$-linearised (complex) vector bundles $\cF$ and $\cF'$ over $X$, there is the obvious
notion of a \emph{$G$-equivariant homomorphism between $\cF$ and $\cF'$}. It
induces naturally an ordinary homomorphism between $\cF_G$ and $\cF'_G$. This
construction is compatible with the notion of exact sequences, so we get in
fact a group homomorphism from the \emph{$G$-equivariant Grothendieck group}
$K^0_G(X)$ of $X$ to the ordinary Grothendieck group $K^0(X_G)$ of (complex)
vector bundles. By forgetting the $G$-linearisations, one defines another
group homomorphism from $K^0_G(X) \to K^0(X)$.

This allows one to construct classes in the equivariant cohomology. Let
$\alpha$ be a characteristic class of complex
$K$-theory with values in $\IF_p$ (in the sequel, we will use reductions modulo $p$ of
integral characteristic classes like integral linear combinations of Chern classes). Let $\cF$ 
be a $G$-equivariant vector bundle, or more generally a class in the $G$-equivariant 
Grothendieck group $K^0_G(X)$.
Then $\alpha(\cF_G) \in H^\ast(X_G, \IF_p)$. By the naturality
of characteristic classes, the restriction of $\alpha(\cF_G)$ to a fibre 
of $f\colon X_G \to \BG$ is just $\alpha(\cF)$.

\begin{proposition}\label{prop:degE2Hilb2} 
Assume that $\dim_\IR X=8$ and $H^\odd(X,\IF_p)=0$. Let $\cF\in K^0_G(X)$ be a class
in the equivariant complex $K$-theory of $X$ and 
$c\coloneqq \alpha_2(\cF)\in H^4(X,\IF_p)$
a characteristic class. Assume that the multiplication maps
\begin{align*}
H^2(X,\IF_p) &\to H^6(X,\IF_p),\quad \beta \mapsto c \cup \beta\\
H^0(X,\IF_p) &\to H^8(X,\IF_p),\quad \beta \mapsto c^2 \cup \beta
\end{align*}
are isomorphisms. Then the spectral sequence of equivariant cohomology with 
coefficients in $\IF_p$ degenerates at the $E_2$-term.
\end{proposition}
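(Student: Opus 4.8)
The plan is to follow the proof of Proposition~\ref{prop:degE2K3} in structure, replacing the fundamental class of a fixed point by the characteristic class $c$ and raising the fibre dimension from four to eight. Since $c$ is a priori only defined on $X$, I would first lift it to the total space using the construction recalled before the statement: as $\cF\in K^0_G(X)$, the induced bundle $\cF_G$ over $X_G$ provides a class
$$u\coloneqq\alpha_2(\cF_G)\in H^4(X_G,\IF_p)$$
whose restriction to any fibre of $f\colon X_G\to\BG$ equals $c=\alpha_2(\cF)$. Viewing $u$ as a morphism $u\colon\IF_p\to\IF_p[4]$ in the derived category of sheaves of $\IF_p$-vector spaces over $X_G$ and pushing it down, I obtain
$$q\coloneqq\RR f_\ast u\colon\RR f_\ast\IF_p\to\RR f_\ast\IF_p[4]$$
in the derived category over $\BG$.

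Next I would apply Deligne's criterion exactly as in Proposition~\ref{prop:degE2K3}, namely \cite[Proposition~2.1]{Deligne} modified by \cite[Remarque~(1.9), $s=2$]{Deligne} (where the hypothesis $H^\odd(X,\IF_p)=0$ enters): if the iterated maps $q^i\colon R^{4-2i}f_\ast\IF_p\to R^{4+2i}f_\ast\IF_p$ built from $q$ are isomorphisms, then $\IF_p$ satisfies the Lefschetz condition relative to $u$, so that $\RR f_\ast\IF_p\cong\bigoplus_i R^if_\ast\IF_p[-i]$ and the spectral sequence degenerates at the $E_2$-term. Because the fibre cohomology is concentrated in degrees $0,2,4,6,8$ and $u$ has degree four, only $i=1$ and $i=2$ occur, the middle sheaf $R^4f_\ast\IF_p$ is the primitive term, and the two conditions to verify are that
$$q\colon R^2f_\ast\IF_p\to R^6f_\ast\IF_p,\qquad q^2\colon R^0f_\ast\IF_p\to R^8f_\ast\IF_p$$
are isomorphisms.

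To check these I would argue fibre-wise, just as in the four-dimensional case. Their sources and targets are higher direct images of a constant sheaf along the locally trivial fibration $f$, hence locally constant, so it suffices to test the maps on fibres. By base change the fibre of $R^if_\ast\IF_p$ at $t\in\BG$ is $H^i(X,\IF_p)$, and since $u$ restricts to $c$ on the fibre, the naturality of characteristic classes identifies the fibre of $q$ with cup-product by $c$. Hence the fibre of $q\colon R^2f_\ast\IF_p\to R^6f_\ast\IF_p$ is $\beta\mapsto c\cup\beta$ and the fibre of $q^2\colon R^0f_\ast\IF_p\to R^8f_\ast\IF_p$ is $\beta\mapsto c^2\cup\beta$; both are isomorphisms by hypothesis, and the proposition follows.

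The step I expect to demand the most care is pinning down the precise shape of Deligne's criterion in this degenerate, unevenly graded situation: one must be sure that with a degree-four operator on cohomology supported in degrees $0,2,4,6,8$ the correct hard-Lefschetz hypotheses are exactly $q$ on $R^2f_\ast\to R^6f_\ast$ and $q^2$ on $R^0f_\ast\to R^8f_\ast$ — and in particular that no condition is needed on the middle term $R^4f_\ast\IF_p$, nor on the individual degree-four maps $H^0\to H^4$ and $H^4\to H^8$. Once the bookkeeping of \cite[Remarque~(1.9), $s=2$]{Deligne} is matched to relative complex dimension four, everything else is formal and parallels the four-dimensional argument.
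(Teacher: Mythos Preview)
Your proposal is correct and follows essentially the same approach as the paper: the paper's proof is in fact terser than yours, simply stating that the argument is virtually identical to that of Proposition~\ref{prop:degE2K3}, defining $u=\alpha(\cF_G)$ and $q=\RR f_\ast u$, invoking Deligne's criterion \cite[Proposition~(2.1), Remarque~(1.9), $s=2$]{Deligne} for the two maps $q\colon R^2f_\ast\IF_p\to R^6f_\ast\IF_p$ and $q^2\colon R^0f_\ast\IF_p\to R^8f_\ast\IF_p$, and checking these fibrewise as multiplication by $c$ and $c^2$. Your closing remark about the bookkeeping in Deligne's criterion is a fair caveat, but the paper handles it exactly as you anticipate.
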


\begin{proof}
The proof is virtually the same as for  proposition~\ref{prop:degE2K3}. Denoting
as above ${u\coloneqq \alpha(\cF_G)\in H^4(X_G,\IF_p)}$ and 
$q\coloneqq \RR f_\ast u\colon \RR f_\ast\IF_p\to\RR f_\ast\IF_p[4]$, we use again
Deligne \cite[Proposition (2.1)]{Deligne} modified by the arguments of
\cite[Remarque (1.9), $s = 2$]{Deligne}: if $q\colon R^2f_\ast\IF_p\to R^6 f_\ast\IF_p$
and $q^2\colon R^0f_\ast\IF_p\to R^8 f_\ast\IF_p$ are isomorphisms, then $\IF_p$ satisfies
the Lefschetz condition relative to $u$ and the spectral sequence degenerates at the $E_2$-term.
Again this can be checked fibrewise, where these maps are the multiplications by $c$ and 
$c^2$ respectively.
\end{proof}

\begin{remark}
As an example, assume that $X$ is a smooth complex algebraic variety of complex dimension two 
that possesses a $G$-fixed point $x$. The skyscraper sheaf
to this point defines a class $[x]$ in $K^0_G(X)$ (after a finite $G$-equivariant resolution). For $\alpha$ take the second
Chern class~$c_2$. It follows that $u \coloneqq c_2([x]) \in H^4(X_G, \IF_p)$
is a class whose restriction to each fibre $X$ of $p\colon X_G \to \BG$ is just
the fundamental class of the point $x$ since $\dim_\IR X=4$. This is the class used in 
Proposition~\ref{prop:degE2K3}.
\end{remark}

\begin{remark} The preceding two propositions are valid for any finite group $G$ acting on $X$, not only $\IZ/p\IZ$.
\end{remark}


\section{Two integral parameters}

Assume that $H^*(X,\IZ)$ is torsion-free.  
By the universal coefficient theorem, one has $H^*(X,\IF_p)\cong H^*(X,\IZ)\otimes_\IZ \IF_p$ and the homomorphisms of 
reduction modulo~$p$, denoted by
$$
\kappa_k\colon H^k(X,\IZ)\lra H^k(X,\IF_p)
$$ 
are surjective for all $k$.

Let $\xi_p$ be a primitive $p$-th root of the unity, $K\coloneqq \IQ(\xi_p)$ 
and $\mathcal{O}_K\coloneqq\IZ[\xi_p]$ the ring of algebraic integers of $K$. 
By a classical theorem of Masley--Montgomery~\cite{MasleyMontgomery}, $\mathcal{O}_K$ is a PID if and only 
if $p\leq 19$. 
The $G$-module structure of $\mathcal{O}_K$ is defined by $g\cdot x=\xi_p x$ for $x\in \mathcal{O}_K$. For any $a\in \mathcal{O}_K$, we
denote by  $(\mathcal{O}_K,a)$ the module $\mathcal{O}_K\oplus \IZ$ whose $G$-module structure is defined by 
$g\cdot(x,k)=(\xi_p x+ka,k)$.

\begin{proposition} 
\label{prop:decompCurtisReiner}
Assume that $H^*(X,\IZ)$ is torsion-free and $3\leq p\leq 19$. Then for $0\leq k\leq \dim_\IR X$ one has:
\begin{enumerate}
\item\label{propCurtis:item1} $\ell_i^k(X)=0$ for $2\leq i\leq p-2$.
\item\label{propCurtis:item2} $\rank_{\IZ} H^k(X,\IZ)=p\ell_p^k(X)+(p-1)\ell_{p-1}^k(X)+\ell_1^k(X)$.
\item\label{propCurtis:item3} $\dim_{\IF_p} H^k(X,\IF_p)^G=\ell_p^k(X)+\ell_{p-1}^k(X)+\ell_1^k(X)$.
\item\label{propCurtis:item4} $\rank_\IZ H^k(X,\IZ)^G=\ell_p^k(X)+\ell_1^k(X)$.
\end{enumerate}
\end{proposition}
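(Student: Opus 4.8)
The plan is to deduce all four assertions from a single structural input: a decomposition of the $\IZ[G]$-lattice $H^k(X,\IZ)$ into indecomposable summands, available precisely because $3\le p\le 19$. Fix $k$ and set $M:=H^k(X,\IZ)$; by the torsion-freeness hypothesis $M$ is a finitely generated torsion-free $\IZ[G]$-module, i.e. a $\IZ[G]$-lattice. Since $p\le 19$, the ring $\mathcal{O}_K$ is a PID by the Masley--Montgomery theorem, equivalently the class number of $K=\IQ(\xi_p)$ equals one. Under this hypothesis the Diederichsen--Reiner classification of the integral representations of $G$ (Curtis--Reiner) asserts that there are exactly three indecomposable $\IZ[G]$-lattices: the trivial lattice $\IZ$, the lattice $\mathcal{O}_K$ on which $g$ acts by multiplication by $\xi_p$, and the regular lattice $\IZ[G]\cong(\mathcal{O}_K,1)$; in this PID case every module $(\mathcal{O}_K,a)$ is isomorphic either to $\mathcal{O}_K\oplus\IZ=(\mathcal{O}_K,0)$ or to $\IZ[G]$. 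I would therefore fix an isomorphism $M\cong\IZ^{a_k}\oplus\mathcal{O}_K^{b_k}\oplus\IZ[G]^{c_k}$ with $a_k,b_k,c_k\in\IN$.

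Each of the four quantities in the statement is additive over finite direct sums, so it suffices to evaluate them on the three building blocks. For the Jordan types modulo $p$ one has $\IZ\otimes_\IZ\IF_p\cong N_1$ and $\IZ[G]\otimes_\IZ\IF_p\cong\IF_p[G]\cong N_p$, while the congruence $\Phi_p(X)\equiv(X-1)^{p-1}\pmod{p}$ (a consequence of $X^p-1\equiv(X-1)^p$) gives $\mathcal{O}_K\otimes_\IZ\IF_p\cong\IF_p[X]/(X-1)^{p-1}\cong N_{p-1}$, a single Jordan block with $g$ acting by multiplication by $X$. By uniqueness of the Jordan decomposition this identifies $\ell_1^k(X)=a_k$, $\ell_{p-1}^k(X)=b_k$, $\ell_p^k(X)=c_k$ and $\ell_i^k(X)=0$ for $2\le i\le p-2$, which is item~(1); the rank count $a_k+(p-1)b_k+pc_k$ is item~(2). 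Since every block $N_q$ has a one-dimensional space of $G$-invariants, namely $\ker\bar\tau$ as computed in Lemma~\ref{lem:cohGadditif}, one gets $\dim_{\IF_p}H^k(X,\IF_p)^G=a_k+b_k+c_k$, which is item~(3). For the integral invariants I would argue block by block: $\IZ^G=\IZ$ is of rank one, $\mathcal{O}_K^G=0$ because $(\xi_p-1)x=0$ forces $x=0$ in the domain $\mathcal{O}_K$, and $\IZ[G]^G=\IZ\cdot\sigma$ is of rank one; hence $\rank_\IZ H^k(X,\IZ)^G=a_k+c_k$, which is item~(4).

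The one substantive ingredient is the Diederichsen--Reiner classification together with its reliance on $\mathcal{O}_K$ being a PID, so the point to get right is that $p\le 19$ forces the class number of $K$ to equal one, whence $\mathcal{O}_K$ is a PID and there are exactly three indecomposable $\IZ[G]$-lattices; all of this is supplied by the cited results, and everything downstream is routine bookkeeping of Jordan blocks and fixed spaces. I would also record that the hypothesis $p\ge 3$ keeps the blocks $N_1$, $N_{p-1}$ and $N_p$ mutually distinct, so that reading the $\ell_q^k(X)$ off the reduced decomposition is unambiguous (item~(1) is non-vacuous only once $p\ge 5$), and that the additivity invoked throughout is exactly what lets the three local computations assemble into the stated global identities.
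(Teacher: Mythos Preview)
Your proof is correct and follows essentially the same approach as the paper: both invoke Masley--Montgomery to ensure $\mathcal{O}_K$ is a PID, apply the Diederichsen--Reiner classification to decompose $H^k(X,\IZ)$ into copies of $\IZ$, $\mathcal{O}_K$, and the regular lattice, and then read off the mod~$p$ Jordan type and the integral and mod~$p$ invariants summand by summand. The only cosmetic difference is that you identify the indecomposable summand $(\mathcal{O}_K,a)$ (with $a\notin(\xi_p-1)\mathcal{O}_K$) directly with $\IZ[G]$, whereas the paper keeps the notation $(\mathcal{O}_K,a)$ and writes down explicit matrices for the action of~$g$.
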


\begin{proof}
By a theorem of Diederichsen and Reiner~\cite[Theorem 74.3]{CurtisReiner}, $H^k(X,\IZ)$ is isomorphic as a {$\IZ[G]$-module} to a direct 
sum:
$$
(A_1,a_1)\oplus\cdots\oplus (A_r,a_r)\oplus A_{r+1}\oplus \cdots\oplus A_{r+s}\oplus Y
$$
where the $A_i$ are fractional ideals in $K$, $a_i\in A_i$ are such that $a_i\notin (\xi_p-1)A_i$ and
 $Y$ is a free $\IZ$-module of finite rank on which $G$ acts trivially. The $G$-module structure on
 $A_i$ is defined by $g\cdot x=\xi_p x$ for all $x\in A_i$, and $(A_i,a_i)$ denotes the module
 $A_i\oplus \IZ$ whose $G$-module structure is defined by $g\cdot(x,k)=(\xi_p x+ka_i,k)$. Since $\mathcal{O}_K$ is a PID, there is only one ideal class
 in $K$ so we have an isomorphism of
 $\IZ[G]$-modules:
$$
H^k(X,\IZ)\cong \oplus_{i=1}^r(\mathcal{O}_K,a_i)\oplus \mathcal{O}_K^{\oplus s}\oplus \IZ^{\oplus t}
$$
for some $a_i\notin(\xi_p-1)\mathcal{O}_K$. The matrix of the action of $g$ acting on $\mathcal{O}_K$ is:
$$
\left(
\raisebox{0.5\depth}{\xymatrixcolsep{1ex}\xymatrixrowsep{1ex}
\xymatrix{0 \ar@{-}[ddrr] & &  \text{\huge{$0$}} & -1\ar@{-}[ddd] \\
1\ar@{-}[ddrr] &   &  & \\
 & &  0 & \\
\text{\huge{$0$}} & & 1 & -1 }}
\right)
$$
so its minimal polynomial over $\IQ$ is the cyclotomic polynomial $\Phi_p$, hence $\mathcal{O}_K$ has no {$G$-invariant} element over $\IZ$. 
Over $\IF_p$, the minimal polynomial of $\mathcal{O}_K\otimes_\IZ\IF_p$ is $\Phi_p(X)=(X-1)^{p-1}$, so $\mathcal{O}_K\otimes_\IZ\IF_p$ is isomorphic to $N_{p-1}$ as a $\IF_p[G]$-module. 
The matrix of the action of $g$ on $(\mathcal{O}_K,a)$  is:
$$
\left(
\raisebox{0.5\depth}{\xymatrixcolsep{1ex}\xymatrixrowsep{1ex}
\xymatrix{0 \ar@{-}[ddrr] & &  \text{\huge{$0$}} & -1\ar@{-}[ddd] & \star\ar@{-}[ddd]\\
1\ar@{-}[ddrr] &  &  & \\
 & &  0 & \\
\text{\huge{$0$}} & & 1 & -1  & \star\\
0\ar@{-}[rrr] & & & 0 & 1}}
\right)
$$
so its minimal polynomial over $\IQ$ is $(X-1)\Phi_p(X)=X^p-1$, hence the subspace of invariants $(\mathcal{O}_K,a)^G$ is one-dimensional.
Over $\IF_p$, the minimal polynomial of $(\mathcal{O}_K,a)\otimes_\IZ\IF_p$ is  $(X-1)^p$,  so $(\mathcal{O}_K,a)\otimes_\IZ\IF_p$ is isomorphic to $N_p\cong\IF_p[G]$ as a $\IF_p[G]$-module. By reduction modulo $p$, the universal coefficient theorem implies:
$$
H^k(X,\IF_p)\cong N_{p}^{\oplus r}\oplus N_{p-1}^{\oplus s}\oplus N_1^{\oplus t}
$$
as $\IF_p[G]$-modules, so $\ell_{p}^k(X)=r$, $\ell_{p-1}^k(X)=s$, $\ell_1^k(X)=t$ and $\ell_i^k(X)=0$ for $2\leq i\leq p-2$, this proves (\ref{propCurtis:item1}) and (\ref{propCurtis:item2}). Since each block contains a one-dimensional $G$-invariant subspace, this implies also that: 
$$
\dim_{\IF_p}H^k(X,\IF_p)^G=\ell_{p}^k(X)+\ell_{p-1}^k(X)+\ell_1^k(X),
$$
this proves (\ref{propCurtis:item3}). Over $\IZ$, only the trivial $G$-module in $H^k(X;\IZ)$ and the $G$-modules $(\mathcal{O}_K,a)$ contain a $G$-invariant subspace, of dimension $1$, so: 
$$
\rank_\IZ H^k(X,\IZ)^G=r+t=\ell_p^k(X)+\ell_1^k(X),
$$
this proves (\ref{propCurtis:item4}).
\end{proof}

\begin{remark}
\label{rem:decompCurtisReinerp=2}
Assume that $H^*(X,\IZ)$ is torsion-free and $p=2$. The above argument is much more basic and one gets 
easily, with the same notation, 
that $\ell_1^k(X)=s+t$, $\ell_2^k(X)=r$, $\rank_\IZ H^k(X,\IZ)^G=r+t$, $\dim_{\IF_2}H^k(X,\IF_2)^G=\ell_1^k(X)+\ell_2^k(X)$.
\end{remark}

Recall that $G=\langle g\rangle$, $\tau=g-1\in\IZ[G]$ and $\sigma=1+g+\cdots+g^{p-1}\in\IZ[G]$. We denote also
by $g,\tau,\sigma$ their actions on any $\IZ[G]$-module.
For $1\leq k \leq \dim_\IR X-1$ we define:
\begin{align*}
\Inv_G^k(X)&\coloneqq \ker(\tau)\cap H^{k}(X,\IZ),\\
\Orth_G^k(X)&\coloneqq \ker(\sigma)\cap H^{k}(X,\IZ).
\end{align*}
As kernels these modules are primitive in $H^k(X,\IZ)$. 

\begin{lemma}\label{lem:H2STisptorsion}
Assume that $H^\ast(X,\IZ)$ is torsion-free and $2\leq p\leq 19$. Then for all~$k$, 
$\frac{H^k(X,\IZ)}{\Inv_G^k(X)\oplus\Orth_G^k(X)}$ is a $p$-torsion module.
\end{lemma}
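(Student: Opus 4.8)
The plan is to exploit two elementary operator identities relating $\tau$ and $\sigma$ together with the torsion-freeness of the cohomology; throughout I abbreviate $H\coloneqq H^k(X,\IZ)$, $\Inv\coloneqq\Inv_G^k(X)=\ker(\tau)$ and $\Orth\coloneqq\Orth_G^k(X)=\ker(\sigma)$. First I would record that, as endomorphisms of any $\IZ[G]$-module, one has $\tau\sigma=\sigma\tau=g^p-1=0$, so that $\im(\sigma)\subseteq\ker(\tau)=\Inv$. Moreover, on $\Inv$ the operator $\sigma$ acts as multiplication by $p$: indeed $\tau y=0$ means $gy=y$, hence $g^jy=y$ for all $j$ and therefore $\sigma y=py$. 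These two facts are the whole engine of the argument.

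Next I would check that the sum appearing in the statement is genuinely direct. If $x\in\Inv\cap\Orth$, then $gx=x$ forces $\sigma x=px$, while $x\in\Orth$ gives $\sigma x=0$; hence $px=0$, and since $H$ is torsion-free we conclude $x=0$. This justifies the notation $\Inv\oplus\Orth$ and shows $\Inv\cap\Orth=0$.

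The heart of the matter is to prove the inclusion $pH\subseteq\Inv\oplus\Orth$. Given $x\in H$, I would use the tautological decomposition
$$
px=\sigma x+(px-\sigma x).
$$
The first summand lies in $\Inv$ because $\im(\sigma)\subseteq\Inv$. For the second, since $\sigma x\in\Inv$ the identity ``$\sigma$ acts as $p$ on $\Inv$'' yields $\sigma^2 x=p\,\sigma x$, so
$$
\sigma(px-\sigma x)=p\,\sigma x-\sigma^2 x=0,
$$
that is, $px-\sigma x\in\ker(\sigma)=\Orth$. Hence $pH\subseteq\Inv\oplus\Orth$, which means the quotient $H/(\Inv\oplus\Orth)$ is annihilated by $p$; as $H$ is finitely generated, this quotient is a finite $\IF_p$-vector space, and in particular a $p$-torsion module.

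I do not expect a genuine obstacle here. The only conceptual point is to notice that one cannot invert $p$ over $\IZ$, so the usual averaging projector is unavailable; the explicit splitting $px=\sigma x+(px-\sigma x)$ is precisely the integral substitute, and it works because $\sigma$ restricts to multiplication by $p$ on the invariants. The remaining care is in the two appeals to torsion-freeness: once to secure $\Inv\cap\Orth=0$, and implicitly to know that $H$ is free of finite rank so the quotient is an honest finite abelian $p$-group. It is worth remarking that this argument uses neither the bound $p\leq 19$ nor any structure theory of $\IZ[G]$-modules, and is valid for every prime~$p$.
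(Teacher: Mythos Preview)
Your proof is correct and genuinely more elementary than the paper's. The paper invokes the Diederichsen--Reiner classification of $\IZ[G]$-lattices (available because $\IZ[\xi_p]$ is a PID for $p\leq 19$) to decompose $H^k(X,\IZ)\cong\bigoplus_i(\mathcal{O}_K,a_i)\oplus\mathcal{O}_K^{\oplus s}\oplus\IZ^{\oplus t}$ and then checks each type of summand separately; the nontrivial case $(\mathcal{O}_K,a)$ requires an explicit computation with the maximal ideal $(\xi_p-1)$. You bypass all structure theory via the single operator identity $\sigma^2=p\sigma$ (equivalently: $\sigma$ restricts to multiplication by $p$ on $\im(\sigma)\subseteq\Inv$), which yields the one-line splitting $px=\sigma x+(px-\sigma x)\in\Inv\oplus\Orth$. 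As you observe, this makes the hypothesis $p\leq 19$ superfluous for the lemma itself. The paper's route does have a byproduct the authors exploit: the explicit generators of the quotient are reused in Remark~\ref{rem:H2STisGtrivial} to see that $G$ acts trivially on $H/(\Inv\oplus\Orth)$. Your method recovers this just as cheaply, since $\sigma\tau=0$ gives $\tau x\in\ker(\sigma)=\Orth$ for every $x$, hence $gx\equiv x$ modulo $\Inv\oplus\Orth$.
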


\begin{proof} First observe that $\Inv_G^k(X)\cap\Orth_G^k(X)=\{0\}$ since $H^k(X,\IZ)$ 
has no $p$-torsion. As in the proof of Proposition~\ref{prop:decompCurtisReiner}, for each $k$ one has a
 $\IZ[G]$-module  decomposition:
$$
H^k(X,\IZ)\cong \oplus_{i=1}^r(\mathcal{O}_K,a_i)\oplus \mathcal{O}_K^{\oplus s}\oplus\IZ^{\oplus t}.
$$
It is clear that $\IZ^{\oplus t}\subset\Inv_G^k(X)$ and $\mathcal{O}_K^{\oplus s}\subset\Orth_G^k(X)$.
In any term $(\mathcal{O}_K,a)= \mathcal{O}_K\oplus\IZ$, denoting $v\coloneqq (0,1)$ in this decomposition,
we show that $pv\in\Inv_G^k(X)\oplus\Orth_G^k(X)$.
For this, observe that the quotient of $\mathcal{O}_K$ by its maximal ideal $(\xi_p-1)$ is $\IZ/p\IZ$,
so for any $x\in \mathcal{O}_K$ there exists $z\in \mathcal{O}_K$ such that $px=(\xi_p-1)z$. One has $\tau(v)=(a,0)$ hence there
exists $z\in \mathcal{O}_K$ such that $\tau(pv)=(pa,0)=((\xi_p-1)z,0)$. Now $\tau((z,0))=((\xi_p-1)z,0)$ hence
$\tau(pv-(z,0))=0$ and $\sigma(z,0)=0$
so finally
$$
pv=(pv-(z,0))+(z,0)\in\Inv_G(X)\oplus\Orth_G(X).
$$
This shows that $\frac{H^k(X,\IZ)}{\Inv_G^k(X)\oplus\Orth_G^k(X)}$
is a torsion module, and that it has only $p$-torsion.
\end{proof}

\begin{remark}\label{rem:H2STisGtrivial} As a consequence of the proof of Lemma~\ref{lem:H2STisptorsion}, observe that
 $\frac{H^k(X,\IZ)}{\Inv_G^k(X)\oplus\Orth_G^k(X)}$ is a trivial $G$-module: it is generated by the vectors
$v=(0,1)$ of each factor $(\mathcal{O}_K,a)$ appearing in the decomposition above, and the action of $G$ is 
$$
g\cdot v=a+v\equiv v\mod \Inv_G^k(X)\oplus\Orth_G^k(X).
$$
\end{remark}

\begin{definition}\label{def:aG} Assume that $H^*(X,\IZ)$ is torsion-free and $2\leq p\leq 19$. For $1\leq k \leq \dim_\IR X-1$ we define $\aG^k(X)\in\IN$ such that:
$$
\frac{H^k(X,\IZ)}{\Inv_G^k(X)\oplus\Orth_G^k(X)}\cong\left(\frac{\IZ}{p\IZ}\right)^{\aG^k(X)}.
$$
\end{definition}

\begin{lemma}\label{lem:fundamental_link}  Assume that $H^*(X,\IZ)$ is torsion-free.
For $1\leq k\leq\dim_\IR X-1$, one has: 
$$
\kappa_k\left(\Inv_G^k(X)\oplus \Orth_G^k(X)\right)=\ker(\bar\sigma)\cap H^k(X,\IF_p).
$$
\end{lemma}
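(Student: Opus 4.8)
The plan is to prove the inclusion ``$\subseteq$'' by a direct equivariance argument and then obtain equality by comparing dimensions. The inclusion is immediate: reduction modulo~$p$ commutes with the $G$-action, so $\kappa_k\circ\tau=\bar\tau\circ\kappa_k$ and $\kappa_k\circ\sigma=\bar\sigma\circ\kappa_k$. Since $\bar\sigma=\bar\tau^{p-1}$ with $p-1\geq 1$, any $v\in\Inv_G^k(X)$ has $\tau v=0$, whence $\bar\tau\kappa_k(v)=\kappa_k(\tau v)=0$ and therefore $\bar\sigma\kappa_k(v)=\bar\tau^{p-2}\bigl(\bar\tau\kappa_k(v)\bigr)=0$; and any $w\in\Orth_G^k(X)$ has $\sigma w=0$, whence $\bar\sigma\kappa_k(w)=\kappa_k(\sigma w)=0$. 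Hence $\kappa_k\bigl(\Inv_G^k(X)\oplus\Orth_G^k(X)\bigr)\subseteq\ker(\bar\sigma)\cap H^k(X,\IF_p)$.

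For the reverse inclusion I would show that both $\IF_p$-subspaces have the same dimension. Write $M\coloneqq H^k(X,\IZ)$ and $L\coloneqq\Inv_G^k(X)\oplus\Orth_G^k(X)$. By Lemma~\ref{lem:H2STisptorsion} the quotient $M/L$ is $p$-torsion, so $pM\subseteq L\subseteq M$ and therefore $\kappa_k(L)=(L+pM)/pM=L/pM$. The exact sequence $0\to L/pM\to M/pM\to M/L\to 0$, together with $\dim_{\IF_p}(M/pM)=\rank_\IZ M$ (torsion-freeness) and Definition~\ref{def:aG}, then gives $\dim_{\IF_p}\kappa_k(L)=\rank_\IZ M-\aG^k(X)$.

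On the other side I use the $\IF_p[G]$-decomposition $H^k(X,\IF_p)\cong N_p^{\oplus r}\oplus N_{p-1}^{\oplus s}\oplus N_1^{\oplus t}$ from Proposition~\ref{prop:decompCurtisReiner} (and Remark~\ref{rem:decompCurtisReinerp=2} for $p=2$). The computation in the proof of Lemma~\ref{lem:cohGadditif} shows that $\bar\sigma$ vanishes on each block $N_q$ with $q<p$ and has a one-dimensional image on each block $N_p$; hence $\dim_{\IF_p}\im(\bar\sigma)=r=\ell_p^k(X)$ and $\dim_{\IF_p}\bigl(\ker(\bar\sigma)\cap H^k(X,\IF_p)\bigr)=\rank_\IZ M-\ell_p^k(X)$. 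It remains only to identify the two numerical invariants, and this is the step deserving care: returning to the decomposition $M\cong\bigoplus_i(\mathcal{O}_K,a_i)\oplus\mathcal{O}_K^{\oplus s}\oplus\IZ^{\oplus t}$ used in Lemma~\ref{lem:H2STisptorsion}, a factor-wise analysis shows that $\ker(\tau)$ and $\ker(\sigma)$ split along the summands, that $\mathcal{O}_K^{\oplus s}\subseteq\Orth_G^k(X)$ and $\IZ^{\oplus t}\subseteq\Inv_G^k(X)$ contribute nothing to $M/L$, and that each $(\mathcal{O}_K,a_i)$ contributes exactly one copy of $\IZ/p\IZ$ (generated by the class of $v=(0,1)$). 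Thus $\aG^k(X)=r=\ell_p^k(X)$, both dimensions equal $\rank_\IZ M-\ell_p^k(X)$, and combined with the inclusion already established this yields the desired equality. The only real obstacle is this bookkeeping identity $\aG^k(X)=\ell_p^k(X)$ and the verification that $pM\subseteq L$; both are essentially already contained in the proofs of the preceding lemmas.
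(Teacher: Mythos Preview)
Your proof is correct, but it follows a genuinely different route from the paper's. The paper proves the reverse inclusion by a short element chase: given $x\in H^k(X,\IZ)$ with $\bar\sigma\kappa_k(x)=0$, it writes $px=u+v$ with $u\in\Inv_G^k(X)$, $v\in\Orth_G^k(X)$ (Lemma~\ref{lem:H2STisptorsion}), observes that $\sigma(x)=u$ and hence $u\in pH^k(X,\IZ)$; primitivity of $\Inv_G^k(X)$ and $\Orth_G^k(X)$ then forces $x\in\Inv_G^k(X)\oplus\Orth_G^k(X)$. This actually establishes the stronger fact $\kappa_k^{-1}(\ker\bar\sigma)=\Inv_G^k(X)\oplus\Orth_G^k(X)$, from which Corollary~\ref{cor:linkZp} and Corollary~\ref{cor:linkaGlp} (i.e.\ $\aG^k(X)=\ell_p^k(X)$) are then \emph{deduced}.

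You proceed in the opposite logical order: you first establish $\aG^k(X)=\ell_p^k(X)$ independently, by refining the proof of Lemma~\ref{lem:H2STisptorsion} to compute that each summand $(\mathcal{O}_K,a_i)$ contributes exactly one copy of $\IZ/p\IZ$ to the quotient (not merely $p$-torsion), and then match dimensions. This is sound and self-contained, but it front-loads the factor-wise bookkeeping that the paper postpones. The paper's argument is shorter, avoids the finer analysis of $(\mathcal{O}_K,a)$, and yields the integral preimage statement for free; your argument has the virtue of making the numerical identity $\aG^k(X)=\ell_p^k(X)$ logically prior to the lemma rather than a consequence of it.
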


\begin{proof}
It is clear that $\kappa_k(\Orth_G^k(X))\subset \ker(\bar\sigma)$. 
Similarly, since $\Inv_G^k(X)=\ker(\tau)$ and $\bar\sigma=\bar\tau^{p-1}$,
 one has $\kappa_k(\Inv_G^k(X))\subset\ker(\bar\sigma)$. Take $x\in H^k(X,\IZ)$ 
such that $\kappa(x)\in\ker(\bar\sigma)$. By Lemma~\ref{lem:H2STisptorsion} one can write 
$px=u+v$ for some $u\in\Inv_G^k(X)$, $v\in\Orth_G^k(X)$. 
Now $0=\bar\sigma\kappa(x)=\kappa\sigma(x)$ and $\sigma(x)=u$. This shows that $u=pu'$ 
for some $u'\in\Inv_G^k(X)$. 
Hence $px=pu'+v$, giving $v=pv'$ for some $v'\in\Orth_G^k(X)$, so finally $x\in\Inv_G^k(X)\oplus\Orth_G^k(X)$.
\end{proof}

\begin{corollary}\label{cor:linkZp}  Assume that $H^*(X,\IZ)$ is torsion-free. For $1\leq k\leq\dim_\IR X-1$ there is an isomorphism of $\IF_p$-vector spaces:
$$
\frac{H^k(X,\IZ)}{\Inv_G^k(X)\oplus\Orth_G^k(X)}\cong \frac{H^k(X,\IF_p)}{\ker(\bar\sigma)\cap H^k(X,\IF_p)}
$$
\end{corollary}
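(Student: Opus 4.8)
The plan is to realise the claimed isomorphism as the map induced by reduction modulo $p$, and to deduce it directly from Lemma~\ref{lem:fundamental_link} together with the saturation of the two submodules. First I would recall that, since $H^\ast(X,\IZ)$ is torsion-free, the reduction map $\kappa_k\colon H^k(X,\IZ)\to H^k(X,\IF_p)$ is surjective with kernel $pH^k(X,\IZ)$. By Lemma~\ref{lem:fundamental_link} it carries $\Inv_G^k(X)\oplus\Orth_G^k(X)$ onto $\ker(\bar\sigma)\cap H^k(X,\IF_p)$; hence composing $\kappa_k$ with the projection onto the quotient $H^k(X,\IF_p)/(\ker(\bar\sigma)\cap H^k(X,\IF_p))$ kills $\Inv_G^k(X)\oplus\Orth_G^k(X)$ and therefore descends to a surjective homomorphism
\[
\bar\kappa_k\colon \frac{H^k(X,\IZ)}{\Inv_G^k(X)\oplus\Orth_G^k(X)}\longrightarrow \frac{H^k(X,\IF_p)}{\ker(\bar\sigma)\cap H^k(X,\IF_p)}.
\]
Both sides are $\IF_p$-vector spaces --- the target visibly, and the source by Lemma~\ref{lem:H2STisptorsion} (equivalently by Definition~\ref{def:aG}) --- so $\bar\kappa_k$ is $\IF_p$-linear and it only remains to prove that it is injective.

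For injectivity I would establish the sharper statement that the full preimage satisfies
\[
\kappa_k^{-1}\bigl(\ker(\bar\sigma)\cap H^k(X,\IF_p)\bigr)=\Inv_G^k(X)\oplus\Orth_G^k(X).
\]
The inclusion $\supseteq$ is the content of the first half of Lemma~\ref{lem:fundamental_link}, while the inclusion $\subseteq$ is exactly the computation performed at the end of that same proof: given $x$ with $\kappa_k(x)\in\ker(\bar\sigma)$, one writes $px=u+v$ with $u\in\Inv_G^k(X)$ and $v\in\Orth_G^k(X)$ (possible by Lemma~\ref{lem:H2STisptorsion}), applies $\sigma$ to obtain $\sigma(x)=u$, and then uses $\bar\sigma\kappa_k(x)=\kappa_k\sigma(x)=0$ together with the primitivity of $\Inv_G^k(X)$ and $\Orth_G^k(X)$ to conclude $x\in\Inv_G^k(X)\oplus\Orth_G^k(X)$. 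Granting this, the kernel of $\bar\kappa_k$ is $\kappa_k^{-1}(\ker(\bar\sigma)\cap H^k(X,\IF_p))/(\Inv_G^k(X)\oplus\Orth_G^k(X))=0$, so $\bar\kappa_k$ is an isomorphism by the first isomorphism theorem.

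The argument is essentially formal once Lemma~\ref{lem:fundamental_link} is in hand, and the only point deserving care is the inclusion $\subseteq$ above: one must rule out an integral class whose reduction lands in $\ker(\bar\sigma)$ but which does not already decompose along the invariant and anti-invariant parts. This is precisely where the primitivity (saturation) of $\Inv_G^k(X)=\ker(\tau)$ and $\Orth_G^k(X)=\ker(\sigma)$ in $H^k(X,\IZ)$ enters, allowing one to divide by $p$ while remaining inside each summand; since this is already dispatched inside the proof of Lemma~\ref{lem:fundamental_link}, no genuinely new work is needed. Alternatively one could finish by a dimension count, comparing $\dim_{\IF_p}$ of the source (equal to $\aG^k(X)$ by Definition~\ref{def:aG}) with that of the target, but the preimage argument is cleaner and avoids recomputing dimensions.
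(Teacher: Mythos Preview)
Your proof is correct and follows exactly the approach implicit in the paper: the corollary is stated there without proof because the key preimage equality $\kappa_k^{-1}(\ker(\bar\sigma)\cap H^k(X,\IF_p))=\Inv_G^k(X)\oplus\Orth_G^k(X)$ is precisely what is established in the proof of Lemma~\ref{lem:fundamental_link}, and the isomorphism then drops out by the first isomorphism theorem. Your write-up simply makes this explicit.
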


\begin{corollary}\label{cor:linkaGlp}  Assume that $H^*(X,\IZ)$ is torsion-free.
For $1\leq k\leq\dim_\IR X-1$ and $2\leq p\leq 19$ one has: 
$$
\aG^k(X)=\ell_p^k(X).
$$
\end{corollary}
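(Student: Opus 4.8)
The plan is to reduce the integral statement to a purely linear-algebraic count over $\IF_p$ using the results already at hand, and then read off the answer from the Jordan block structure of $H^k(X,\IF_p)$.

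First I would combine Definition~\ref{def:aG} with the isomorphism of Corollary~\ref{cor:linkZp}. The latter identifies the integral quotient $\frac{H^k(X,\IZ)}{\Inv_G^k(X)\oplus\Orth_G^k(X)}$ with the $\IF_p$-vector space $\frac{H^k(X,\IF_p)}{\ker(\bar\sigma)\cap H^k(X,\IF_p)}$, while by definition the former is isomorphic to $(\IZ/p\IZ)^{\aG^k(X)}$. Comparing $\IF_p$-dimensions therefore gives
$$
\aG^k(X)=\dim_{\IF_p}\frac{H^k(X,\IF_p)}{\ker(\bar\sigma)\cap H^k(X,\IF_p)}.
$$
This converts the problem into computing the codimension of $\ker(\bar\sigma)$ inside $H^k(X,\IF_p)$.

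Next I would compute this codimension block by block. By the Jordan decomposition one has $H^k(X,\IF_p)\cong\bigoplus_{q=1}^p N_q^{\oplus\ell_q^k(X)}$, and since $\bar\sigma\in\IF_p[G]$ acts through the group algebra it preserves each summand; hence $\ker(\bar\sigma)\cap H^k(X,\IF_p)$ and the resulting quotient both split as direct sums over the blocks. From the computation of $\bar\sigma$ in the proof of Lemma~\ref{lem:cohGadditif}, one has $\ker(\bar\sigma)\cap N_q=N_q$ for $q<p$, whereas $\ker(\bar\sigma)\cap N_p=\langle v_1,\ldots,v_{p-1}\rangle$ is of codimension one in $N_p$. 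Thus each block $N_q$ with $q<p$ contributes nothing to the quotient, while each block $N_p$ contributes a one-dimensional space, giving $\dim_{\IF_p}\frac{H^k(X,\IF_p)}{\ker(\bar\sigma)\cap H^k(X,\IF_p)}=\ell_p^k(X)$. Together with the displayed identity this yields $\aG^k(X)=\ell_p^k(X)$.

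The only point that needs to be stated carefully — rather than a genuine obstacle — is the compatibility of the formation of $\ker(\bar\sigma)$ with the Jordan decomposition, which is what allows the codimension to be summed blockwise; this is immediate because $\bar\sigma$ lies in $\IF_p[G]$ and therefore respects each direct summand. Everything else is a direct application of Corollary~\ref{cor:linkZp} and Lemma~\ref{lem:cohGadditif}, valid uniformly for $2\leq p\leq 19$ (including $p=2$, where $\bar\sigma=\bar\tau$ and the single relevant block is $N_p=N_2$), so no serious difficulty arises.
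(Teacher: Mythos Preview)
Your proof is correct and follows essentially the same route as the paper: both use Corollary~\ref{cor:linkZp} to identify $\aG^k(X)$ with the codimension of $\ker(\bar\sigma)$ in $H^k(X,\IF_p)$, and then invoke the computation from the proof of Lemma~\ref{lem:cohGadditif} on each Jordan block to see that this codimension equals $\ell_p^k(X)$. The only difference is cosmetic---the paper phrases it via $\dim\ker(\bar\sigma)$ while you phrase it via the quotient---and your added remark on the blockwise compatibility of $\bar\sigma$ is harmless.
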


\begin{proof}
By Corollary~\ref{cor:linkZp} one has
$\dim_{\IF_p}\ker\left(\bar\sigma_{|H^k(X,\IF_p)}\right)=h^k(X,\IF_p)-\aG^k(X)$ 
and by Lemma~\ref{lem:cohGadditif} and its proof, $\dim_{\IF_p}\ker\left(\bar\sigma_{|H^k(X,\IF_p)}\right)=h^k(X,\IF_p)-\ell_p^k(X)$.
The result follows.
\end{proof}

Assume now that $3\leq p\leq 19$. There is an exact sequence:
$$
0\lra (\sigma)\lra \IZ[G]\lra \IZ[\xi_p]\lra 0
$$
given by $g\mapsto\xi_p$. Since the $p$-th cyclotomic polynomial $\Phi_p(X)\in\IQ[X]$ is irreducible and $\sigma=\Phi_p(g)$, one deduces that $\Orth_G^k(X)$ is a free 
$\mathcal{O}_K$-module. Since $\mathcal{O}_K$ is a free $\IZ$-module of rank $p-1$, we introduce the following definition:

\begin{definition} \label{def:mG} Assume that $2\leq p\leq 19$. For $1\leq k\leq\dim_\IR X-1$ we define $\mG^k(X)\in\IN$ such that:
$$
\rank_\IZ\Orth_G^k(X)=\mG^k(X)(p-1).
$$
\end{definition}

\begin{corollary}\label{cor:linkmGlp}  Assume that $H^*(X,\IZ)$ is torsion-free.
For $1\leq k\leq\dim_\IR X-1$ and $3\leq p\leq 19$ one has:
$$
\mG^k(X)=\ell_p^k(X)+\ell_{p-1}^k(X).
$$
\end{corollary}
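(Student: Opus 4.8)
The plan is to reduce the identity to a rank count over $\IZ$, exploiting the explicit $\IZ[G]$-module decomposition of Proposition~\ref{prop:decompCurtisReiner} together with the finite-index statement of Lemma~\ref{lem:H2STisptorsion}. The starting point is that $\Inv_G^k(X)\cap\Orth_G^k(X)=\{0\}$ (as $H^k(X,\IZ)$ is torsion-free) while the quotient $H^k(X,\IZ)/\bigl(\Inv_G^k(X)\oplus\Orth_G^k(X)\bigr)$ is a finite $p$-torsion group by Lemma~\ref{lem:H2STisptorsion}. Hence the internal sum $\Inv_G^k(X)\oplus\Orth_G^k(X)$ sits as a finite-index subgroup of $H^k(X,\IZ)$, and the $\IZ$-ranks add:
$$
\rank_\IZ H^k(X,\IZ)=\rank_\IZ\Inv_G^k(X)+\rank_\IZ\Orth_G^k(X).
$$

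Next I would identify each rank with the Jordan data. By definition $\Inv_G^k(X)=\ker(\tau)\cap H^k(X,\IZ)=H^k(X,\IZ)^G$, so $\rank_\IZ\Inv_G^k(X)=\ell_p^k(X)+\ell_1^k(X)$ by Proposition~\ref{prop:decompCurtisReiner}~(\ref{propCurtis:item4}). For the total rank I would insert Proposition~\ref{prop:decompCurtisReiner}~(\ref{propCurtis:item2}), and on the left-hand summand I would use Definition~\ref{def:mG}, $\rank_\IZ\Orth_G^k(X)=\mG^k(X)(p-1)$. Substituting the three expressions into the rank equality gives
$$
p\,\ell_p^k(X)+(p-1)\,\ell_{p-1}^k(X)+\ell_1^k(X)=\bigl(\ell_p^k(X)+\ell_1^k(X)\bigr)+\mG^k(X)(p-1).
$$
The terms $\ell_1^k(X)$ cancel, $p\,\ell_p^k(X)-\ell_p^k(X)=(p-1)\ell_p^k(X)$, and one is left with $(p-1)\bigl(\ell_p^k(X)+\ell_{p-1}^k(X)\bigr)=(p-1)\mG^k(X)$; dividing by $p-1\neq 0$ yields the claim.

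There is essentially no serious obstacle: the result is a one-line consequence of the rank formulas already established, and the only point I would flag explicitly is the rank additivity in the first display, i.e. that a finite-index subgroup has the same $\IZ$-rank as the ambient module — this is precisely where Lemma~\ref{lem:H2STisptorsion} enters. As a cross-check, I would also note that the conclusion can be read off directly from the decomposition of Proposition~\ref{prop:decompCurtisReiner} by evaluating $\ker(\sigma)$ summand by summand: since $\sigma=\Phi_p(g)$, it acts as multiplication by $p$ on each trivial factor $\IZ$ (contributing nothing to the kernel after killing torsion), and it annihilates each $\mathcal{O}_K$ as well as the $\mathcal{O}_K$-part of each $(\mathcal{O}_K,a_i)$. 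Thus $\Orth_G^k(X)$ is a free $\mathcal{O}_K$-module with exactly $r+s=\ell_p^k(X)+\ell_{p-1}^k(X)$ generators, so that $\rank_\IZ\Orth_G^k(X)=(p-1)\bigl(\ell_p^k(X)+\ell_{p-1}^k(X)\bigr)$, recovering $\mG^k(X)=\ell_p^k(X)+\ell_{p-1}^k(X)$.
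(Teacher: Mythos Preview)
Your proof is correct and follows essentially the same route as the paper: both use Lemma~\ref{lem:H2STisptorsion} to get the rank additivity $\rank_\IZ H^k(X,\IZ)=\rank_\IZ\Inv_G^k(X)+\rank_\IZ\Orth_G^k(X)$, insert Proposition~\ref{prop:decompCurtisReiner}(\ref{propCurtis:item4}) for the invariant rank and (\ref{propCurtis:item2}) for the total rank, and solve for $\mG^k(X)$. Your additional cross-check via a direct computation of $\ker(\sigma)$ on the Diederichsen--Reiner summands is not in the paper but is a sound alternative derivation.
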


\begin{proof}
By Proposition~\ref{prop:decompCurtisReiner}(\ref{propCurtis:item4}) one has:
$$
\rank_\IZ\Inv_G^k(X)=\ell_1^k(X)+\ell_p^k(X)
$$
and by Lemma~\ref{lem:H2STisptorsion},
$\rank_\IZ\Inv_G^k(X)+\rank_\IZ\Orth_G^k(X)=h^k(X,\IZ)=h^k(X,\IF_p)$ one gets: 
$$
h^k(X,\IF_p)=\ell_1^k(X)+\ell_p^k(X)+\mG^k(X)(p-1).
$$
Since $h^k(X,\IF_p)=p\ell_p^k(X)+(p-1)\ell_{p-1}^k(X)+\ell_1^k(X)$, one gets the result.
\end{proof}

As a consequence of Corollary~\ref{cor:XGlq} and Proposition~\ref{prop:decompCurtisReiner}, using
Corollaries~\ref{cor:linkaGlp}~\&~\ref{cor:linkmGlp} we get the following explicit relation between the cohomology
of the fixed locus and the parameters $\aG,\mG$.

\begin{corollary}\label{cor:formulep=2} Assume that $p=2$, $H^\ast(X,\IZ)$ is torsion-free and the spectral sequence of equivariant cohomology with coefficients in $\IF_2$ degenerates at the $E_2$-term. Then:
$$
h^*(X^G,\IF_2)=h^\ast(X,\IF_2)-2\sum_{k=1}^{\dim_\IR X-1} \aG^k(X).
$$
\end{corollary}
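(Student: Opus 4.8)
The plan is to reduce everything to the Jordan-block counts $\ell_1^k(X)$ and $\ell_2^k(X)$ and then to recognise the right-hand side as a dimension count. First I would invoke Corollary~\ref{cor:XGlq}: since $p=2$, the only index in the range $1\le q<p$ is $q=1$, so the degeneracy hypothesis yields at once
$$
h^\ast(X^G,\IF_2)=\ell_1^\ast(X)=\sum_{k\geq 0}\ell_1^k(X).
$$
This already fixes the left-hand side, and what remains is purely to express $h^\ast(X,\IF_2)$ and $\sum_k\aG^k(X)$ in the same terms.

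Next I would record the elementary dimension identity coming from the Jordan decomposition over $\IF_2$. By Remark~\ref{rem:decompCurtisReinerp=2} (the $p=2$ analogue of Proposition~\ref{prop:decompCurtisReiner}), for each $k$ one has $H^k(X,\IF_2)\cong N_1^{\oplus\ell_1^k(X)}\oplus N_2^{\oplus\ell_2^k(X)}$, and since $\dim_{\IF_2}N_1=1$ and $\dim_{\IF_2}N_2=2$ this gives
$$
h^k(X,\IF_2)=\ell_1^k(X)+2\,\ell_2^k(X).
$$
Summing over all $k$ yields $h^\ast(X,\IF_2)=\ell_1^\ast(X)+2\,\ell_2^\ast(X)$, and hence $\ell_1^\ast(X)=h^\ast(X,\IF_2)-2\,\ell_2^\ast(X)$.

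It then remains to identify $\ell_2^\ast(X)$ with $\sum_{k=1}^{\dim_\IR X-1}\aG^k(X)$. Corollary~\ref{cor:linkaGlp}, applied with $p=2$, gives $\aG^k(X)=\ell_2^k(X)$ for $1\le k\le\dim_\IR X-1$. The only mismatch is the range of summation: $\aG^k$ is defined only for $1\le k\le\dim_\IR X-1$, whereas $\ell_2^\ast(X)$ sums over all $k\ge 0$. Here I would use that $X$ is connected and that $G$ acts by orientation-preserving diffeomorphisms, so $G$ acts trivially on $H^0(X,\IZ)\cong\IZ$ and on $H^{\dim_\IR X}(X,\IZ)\cong\IZ$; thus these extreme degrees contain no $N_2$-block, i.e.\ $\ell_2^0(X)=\ell_2^{\dim_\IR X}(X)=0$. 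Consequently $\ell_2^\ast(X)=\sum_{k=1}^{\dim_\IR X-1}\ell_2^k(X)=\sum_{k=1}^{\dim_\IR X-1}\aG^k(X)$, and substituting into the previous display gives the asserted formula.

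The whole argument is a concatenation of results already established, so there is no serious analytic obstacle. The one point demanding care is the bookkeeping of summation ranges in the last step, namely checking that the extreme cohomological degrees carry no regular-representation block, so that the truncated sum over $\aG^k$ coincides with the full sum of $\ell_2^k$; this is precisely where the orientation-preserving hypothesis on the $G$-action enters.
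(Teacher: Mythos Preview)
Your proof is correct and follows essentially the same approach as the paper, which simply states that the formula is ``a consequence of Corollary~\ref{cor:XGlq} and Proposition~\ref{prop:decompCurtisReiner}, using Corollaries~\ref{cor:linkaGlp}~\&~\ref{cor:linkmGlp}'' without further detail. You have correctly unpacked this: the identification $h^\ast(X^G,\IF_2)=\ell_1^\ast(X)$ from Corollary~\ref{cor:XGlq}, the dimension count $h^k(X,\IF_2)=\ell_1^k(X)+2\ell_2^k(X)$, the link $\aG^k(X)=\ell_2^k(X)$ from Corollary~\ref{cor:linkaGlp}, and the observation that $\ell_2^0(X)=\ell_2^{\dim_\IR X}(X)=0$ (which you could equally justify by noting that $H^0(X,\IF_2)$ and $H^{\dim_\IR X}(X,\IF_2)$ are one-dimensional, hence cannot carry an $N_2$-block). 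One cosmetic remark: the decomposition $H^k(X,\IF_2)\cong N_1^{\oplus\ell_1^k(X)}\oplus N_2^{\oplus\ell_2^k(X)}$ is already the \emph{definition} of $\ell_q^k(X)$ in Section~\ref{sec:terminology} and does not require Remark~\ref{rem:decompCurtisReinerp=2}, which concerns the integral $\IZ[G]$-module structure.
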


\begin{corollary}\label{cor:formulepqqe}
Assume that $3\leq p\leq 19$, $H^\ast(X,\IZ)$ is torsion-free and the spectral sequence of equivariant cohomology with coefficients in $\IF_p$ degenerates at the $E_2$-term. Then:
$$
h^*(X^G,\IF_p)=h^\ast(X,\IF_p)-2\sum_{k=1}^{\dim_\IR X-1} \aG^k(X)-(p-2)\sum_{k=1}^{\dim_\IR X-1} \mG^k(X).
$$
\end{corollary}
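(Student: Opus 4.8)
The plan is to unwind the degeneracy hypothesis through Corollary~\ref{cor:XGlq} and then reorganize the Jordan block counts using the integral decomposition of Proposition~\ref{prop:decompCurtisReiner}, the whole argument being essentially bookkeeping once these two inputs are in place.

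First I would invoke Corollary~\ref{cor:XGlq}: the degeneracy of the spectral sequence gives $h^\ast(X^G,\IF_p)=\sum_{1\leq q<p}\ell^\ast_q(X)$. Since $3\leq p$, Proposition~\ref{prop:decompCurtisReiner}(\ref{propCurtis:item1}) forces $\ell_q^k(X)=0$ for $2\leq q\leq p-2$, so the only surviving terms in the range $1\leq q<p$ are $q=1$ and $q=p-1$, whence $h^\ast(X^G,\IF_p)=\ell_1^\ast(X)+\ell_{p-1}^\ast(X)$. Next I would compare this with the total mod $p$ Betti number. Because $H^\ast(X,\IZ)$ is torsion-free, the universal coefficient theorem gives $h^k(X,\IF_p)=\rank_\IZ H^k(X,\IZ)$, and Proposition~\ref{prop:decompCurtisReiner}(\ref{propCurtis:item2}) expresses this as $p\ell_p^k(X)+(p-1)\ell_{p-1}^k(X)+\ell_1^k(X)$. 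Summing over all degrees and subtracting the formula for $h^\ast(X^G,\IF_p)$, the $\ell_1^\ast$ contributions cancel and the $\ell_{p-1}^\ast$ contributions partially cancel, leaving
$$
h^\ast(X,\IF_p)-h^\ast(X^G,\IF_p)=p\,\ell_p^\ast(X)+(p-2)\,\ell_{p-1}^\ast(X).
$$

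The one point requiring care, and the step I expect to be the main (if minor) obstacle, is the passage from the total sums $\ell_p^\ast(X)$ and $\ell_{p-1}^\ast(X)$ over all degrees to the sums $\sum_{k=1}^{\dim_\IR X-1}$ appearing in the statement, since $\aG^k(X)$ and $\mG^k(X)$ are only defined for $1\leq k\leq\dim_\IR X-1$. Here I would observe that in degree $k=0$ the space $H^0(X,\IZ)\cong\IZ$ carries the trivial $G$-action because $X$ is connected, and in degree $k=\dim_\IR X$ the space $H^{\dim_\IR X}(X,\IZ)\cong\IZ$ is again a trivial $G$-module because the action is orientation-preserving. In both cases the Jordan decomposition consists of a single block $N_1$, so $\ell_p^k(X)=\ell_{p-1}^k(X)=0$ for $k\in\{0,\dim_\IR X\}$. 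Consequently the boundary degrees contribute nothing to $\ell_p^\ast$ and $\ell_{p-1}^\ast$, and these total sums agree with the truncated sums $\sum_{k=1}^{\dim_\IR X-1}$.

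Finally I would substitute $\ell_p^k(X)=\aG^k(X)$ from Corollary~\ref{cor:linkaGlp} and $\ell_{p-1}^k(X)=\mG^k(X)-\aG^k(X)$ from Corollaries~\ref{cor:linkaGlp}~\&~\ref{cor:linkmGlp}, all sums now running over $1\leq k\leq\dim_\IR X-1$, so that
\begin{align*}
p\,\ell_p^\ast(X)+(p-2)\,\ell_{p-1}^\ast(X)
&=p\sum_{k}\aG^k(X)+(p-2)\sum_{k}\bigl(\mG^k(X)-\aG^k(X)\bigr)\\
&=2\sum_{k}\aG^k(X)+(p-2)\sum_{k}\mG^k(X).
\end{align*}
Combining this with the displayed difference above rearranges directly into the claimed formula, which completes the proof.
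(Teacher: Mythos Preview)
Your proof is correct and follows essentially the same approach the paper indicates: it is precisely the combination of Corollary~\ref{cor:XGlq}, Proposition~\ref{prop:decompCurtisReiner}, and Corollaries~\ref{cor:linkaGlp}~\&~\ref{cor:linkmGlp} that the paper cites just before stating the two corollaries. Your explicit handling of the boundary degrees $k=0$ and $k=\dim_\IR X$ (where the $G$-module is the trivial $\IZ$, contributing no $N_p$ or $N_{p-1}$ blocks) is a detail the paper leaves implicit but which is exactly what is needed to match the index range in the statement.
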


\begin{proposition}
\label{prop:inequalities}
The following inequalities hold for all $k$:
\begin{align*}
0&\leq \mG^k(X)(p-1)\leq h^k(X,\IF_p),\\
0&\leq \aG^k(X)\leq\min\{\mG^k(X)(p-1),h^k(X,\IF_p)-\mG^k(X)(p-1)\}.
\end{align*}
\end{proposition}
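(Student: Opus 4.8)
The plan is to express the two defining quantities $\aG^k(X)$ and $\mG^k(X)(p-1)=\rank_\IZ\Orth_G^k(X)$ in terms of the integral $\IZ[G]$-decomposition of $H^k(X,\IZ)$ already used in Proposition~\ref{prop:decompCurtisReiner} and Lemma~\ref{lem:H2STisptorsion}, and then to read each of the four inequalities off as a nonnegativity statement about the multiplicities of the summands.

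First I would fix the decomposition
\[
H^k(X,\IZ)\cong\bigoplus_{i=1}^r(\mathcal{O}_K,a_i)\oplus\mathcal{O}_K^{\oplus s}\oplus\IZ^{\oplus t}
\]
and record the contribution of each summand to the two primitive sublattices $\Inv_G^k(X)=\ker(\tau)\cap H^k(X,\IZ)$ and $\Orth_G^k(X)=\ker(\sigma)\cap H^k(X,\IZ)$. A trivial factor $\IZ$ lies entirely in $\Inv_G^k(X)$ and meets $\Orth_G^k(X)$ only in $0$; a factor $\mathcal{O}_K$ lies entirely in $\Orth_G^k(X)$, since $\sigma=\Phi_p(g)$ annihilates it, and meets $\Inv_G^k(X)$ only in $0$; and a twisted factor $(\mathcal{O}_K,a_i)=\mathcal{O}_K\oplus\IZ$ contributes its one-dimensional invariant line to $\Inv_G^k(X)$ and exactly its $\mathcal{O}_K$-part, of rank $p-1$, to $\Orth_G^k(X)$ (the vector $(0,1)$ is not killed by $\sigma$, as $\sigma$ multiplies its last coordinate by $p$). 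This yields the uniform rank count
\[
\rank_\IZ\Inv_G^k(X)=r+t,\qquad \rank_\IZ\Orth_G^k(X)=(p-1)(r+s),
\]
the latter being $\mG^k(X)(p-1)$ by Definition~\ref{def:mG}. By Remark~\ref{rem:H2STisGtrivial} and Definition~\ref{def:aG} the quotient $H^k(X,\IZ)/(\Inv_G^k(X)\oplus\Orth_G^k(X))$ is freely generated over $\IZ/p\IZ$ by the $r$ classes of the vectors $(0,1)$, one per twisted factor, so $\aG^k(X)=r$; this is also Corollary~\ref{cor:linkaGlp}.

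With these identities, and with $\rank_\IZ H^k(X,\IZ)=h^k(X,\IF_p)$ coming from torsion-freeness, the inequalities are immediate. For the first pair, $\mG^k(X)(p-1)=(p-1)(r+s)\ge 0$, while
\[
h^k(X,\IF_p)-\mG^k(X)(p-1)=\rank_\IZ\Inv_G^k(X)=r+t\ge 0.
\]
For the second pair, $\aG^k(X)=r\ge 0$, and
\[
\mG^k(X)(p-1)-\aG^k(X)=(p-1)(r+s)-r=(p-2)r+(p-1)s\ge 0
\]
since $p\ge 2$, whereas $\bigl(h^k(X,\IF_p)-\mG^k(X)(p-1)\bigr)-\aG^k(X)=(r+t)-r=t\ge 0$.

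I do not expect a genuine obstacle here; the content is bookkeeping on the Diederichsen--Reiner decomposition. The only point that needs care is uniformity across all primes, in particular the degenerate case $p=2$, where $\mathcal{O}_K=\IZ$ carries the sign representation and $N_{p-1}=N_1$ is trivial, so the mod-$p$ Jordan numbers no longer separate the multiplicities $s$ and $t$. Working directly with the integral summands $(\mathcal{O}_K,a_i),\mathcal{O}_K,\IZ$ rather than with the $\ell_q^k(X)$ sidesteps this and keeps the argument independent of the case distinctions appearing in Corollary~\ref{cor:linkmGlp} and Remark~\ref{rem:decompCurtisReinerp=2}.
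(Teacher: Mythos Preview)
Your argument is correct. The computations $\rank_\IZ\Inv_G^k(X)=r+t$, $\rank_\IZ\Orth_G^k(X)=(p-1)(r+s)$, and $\aG^k(X)=r$ are right, and the four inequalities then reduce to the nonnegativity of $r$, $s$, $t$, and $(p-2)r+(p-1)s$, exactly as you say.

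The paper takes a shorter, more conceptual route for the bound on $\aG^k(X)$: rather than unpacking the Diederichsen--Reiner decomposition, it observes that since $\Inv_G^k(X)$ and $\Orth_G^k(X)$ are both primitive in $H^k(X,\IZ)$, any class in $H^k(X,\IZ)\setminus(\Inv_G^k(X)\oplus\Orth_G^k(X))$ must be of the form $\tfrac{1}{p}(u+v)$ with $u\in\Inv_G^k(X)$ and $v\in\Orth_G^k(X)$ neither of which is itself divisible by $p$; the number of such classes independent modulo $\Inv_G^k(X)\oplus\Orth_G^k(X)$ is then at most $\rank\Inv_G^k(X)$ and at most $\rank\Orth_G^k(X)$. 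The first line of inequalities is left implicit (it is just $\Orth_G^k(X)\subset H^k(X,\IZ)$). Your approach buys you the exact identities $\aG^k(X)=r$, $\mG^k(X)=r+s$, $h^k(X,\IF_p)-\mG^k(X)(p-1)=r+t$, which the paper's primitivity argument does not extract; the paper's approach buys you a proof that does not invoke the integral representation theory at all and would apply verbatim to any pair of primitive complementary submodules with $p$-elementary quotient.
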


\begin{proof}
One has $\frac{H^k(X,\IZ)}{\Inv_G^k(X)\oplus \Orth_G^k(X)}\cong\left(\frac{\IZ}{p\IZ}\right)^{\aG^k(X)}$: since $\Inv_G^k(X)$ and $\Orth_G^k(X)$ are primitive, the divisible classes are of the form $\frac{1}{p}(u+v)$ with $u\in\Inv_G^k(X)$ and $v\in\Orth_G^k(X)$. The integer $\aG^k(X)$ is the maximal number of divisible classes independant modulo $\Inv_G^k(X)\oplus\Orth_G^k(X)$ 
and is thus smaller than $\rank(\Inv_G^k(X))$ and $\rank(\Orth_G^k(X))$.
\end{proof}


\section{Automorphisms of irreducible holomorphic symplectic manifolds}

\subsection{Basic facts on lattices}

Let $\left(\Lambda,\langle\cdot,\cdot\rangle\right)$ be a lattice (a free $\IZ$-module with an integral, bilinear symmetric, non-degenerate two-form). If $\Gamma\subset\Lambda$ is a sublattice, the \emph{dual lattice} is by definition $\Gamma^*\coloneqq \Hom_\IZ(\Gamma,\IZ)$. Recall that: 
$$
\Gamma^*\cong\{x\in\Gamma\otimes\IQ \,|\,\langle x,y\rangle\in \IZ\text{ for all } y\in\Gamma\}.
$$
Then $\Gamma\subset \Gamma^*$ is a sublattice of the same rank, so the quotient $A_\Gamma\coloneqq \Gamma^*/\Gamma$ is a finite abelian group, called the \emph{discriminant group}. Its order is denoted by $\disc(\Gamma)\coloneqq \left|A_\Gamma\right|$ and called the \emph{discriminant} of $\Gamma$. 

A lattice $\Lambda$ is called \emph{unimodular} if $\Lambda^*=\Lambda$, that is $A_\Lambda=0$. In a basis $(e_i)_i$ of~$\Lambda$, consider the matrix $M=(\langle e_i,e_j\rangle)_{i,j}$: then $\disc(\Lambda)=\det(M)$.

A sublattice $\Gamma\subset\Lambda$ is called \emph{primitive} if $\Lambda/\Gamma$ is a free $\IZ$-module.
If $\Lambda$ is unimodular and $\Gamma\subset\Lambda$ is primitive, then $A_\Gamma\cong A_{\Gamma^\perp}$.

Let $p$ be a prime number. A lattice $\Gamma$ is called $p$-\emph{elementary} if $A_\Gamma\cong(\IZ/p\IZ)^{a(\Gamma)}$ for some integer $a(\Gamma)$. In particular, $\disc(\Gamma)=p^{a(\Gamma)}$. If $\Gamma$ is primitively embedded in a unimodular lattice $\Lambda$, then $\Gamma^\perp$ is also $p$-elementary and $\disc(\Gamma)=\disc(\Gamma^\perp)$.

A lattice $\Gamma$ is called \emph{even} if $(x,x)\equiv 0\mod 2$ for all $x\in\Gamma$. Equivalently, the diagonal elements of the matrix $M$ are even.

\subsection{Basic facts on irreducible holomorphic symplectic manifolds}
\label{ss:basicsIHS}

A  compact K\"ahler manifold $X$ is called \emph{irreducible symplectic} if $X$ is simply connected and $H^0(X,\Omega_X^2)$ is spanned by an everywhere non-degenerate closed two-form, denoted $\omega_X$. We have a Hodge decomposition:
$$
H^2(X,\IC)=H^{2,0}(X)\oplus H^{1,1}(X)\oplus H^{0,2}(X)
$$
and we put $H^{1,1}(X)_\IR\coloneqq H^{1,1}(X)\cap H^2(X,\IR)$. The second cohomology group $H^2(X,\IZ)$ is torsion-free and equipped with a bilinear symmetric, even, non-degenerate two-form of signature $(3,b_2(X)-3)$, called the \emph{Beauville--Bogomolov} form \cite{Beauvillec1Nul}, and such that --- after scalar extension --- $H^{1,1}(X)$ is orthogonal to $H^{2,0}(X)\oplus H^{0,2}(X)$. We denote by $\langle\cdot,\cdot\rangle$ the bilinear form and by $q$ the associated quadratic form. The \emph{N\'eron-Severi} group of $X$ is defined by:
$$
\NS(X)\coloneqq H^{1,1}(X)_\IR\cap H^2(X,\IZ)=\left\{x\in H^2(X,\IZ)\,|\, \langle x,\omega_X\rangle=0\right\}.
$$
We set $\rho(X)\coloneqq \rank (\NS(X))$ the \emph{Picard number} of $X$ and $\Trans(X)\coloneqq \NS(X)^\perp$ the orthogonal complement of $\NS(X)$ in $H^2(X,\IZ)$ for the quadratic form, called the \emph{transcendental lattice}. Note that $\NS(X)$ and $\Trans(X)$ 
are primitively embedded in $H^2(X,\IZ)$.
We denote the signature of a lattice by $(n_1,n_2,n_3)$ where $n_1$ is the number of positive eigenvalues, $n_2$ of the zero eigenvalues and $n_3$ of the negative eigenvalues of the associated real quadratic form. There are three possibilities:
\begin{description}
\item[hyperbolic type] $\NS(X)$ is non--degenerate, of signature $(1,0,\rho(X)-1)$ and $\Trans(X)$ has signature $(2,0,b_2(X)-\rho(X)-2)$,
\item[parabolic type] $\NS(X)\cap\Trans(X)$ is of dimension $1$, $\NS(X)$ has signature $(0,1,\rho(X)-1)$ and $\Trans(X)$ has signature $(2,1,b_2(X)-\rho(X)-3)$,
\item[elliptic type] $\NS(X)$ is negative definite, of signature $(0,0,\rho(X))$ and $\Trans(X)$ has signature $(3,0,b_2(X)-\rho(X)-3)$.
\end{description} 
By Huybrechts \cite[Theorem 3.11]{Huybrechts}, $X$ is projective if and only if $\NS(X)$ is hyperbolic.

Let $G\subset\Aut(X)$ be a finite group of automorphisms of prime order $p$ and fix a generator $g\in G$. If $g^*\omega_X=\omega_X$ then $G$ is called \emph{symplectic}. Otherwise, there exists a primitive $p$-th root of the unity $\xi_p$ such that $g^*\omega_X=\xi_p\omega_X$ and $G$ is called \emph{non-symplectic}. For simplicity we put:
$$
\Inv_G(X)\coloneqq \Inv_G^2(X), \quad \Orth_G(X)\coloneqq \Orth_G^2(X), \quad \aG(X)\coloneqq \aG^2(X), \quad \mG(X)\coloneqq \mG^2(X).
$$

\begin{lemma}
If $p\leq 19$ then $\Orth_G(X)=\Inv_G(X)^\perp$ and $\Inv_G(X)$ is non-degenerate.
\end{lemma}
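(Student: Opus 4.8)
The plan is to exploit that $g$ acts on $H^2(X,\IZ)$ as an isometry of the Beauville--Bogomolov form. This is standard for automorphisms of irreducible holomorphic symplectic manifolds: $g^\ast$ preserves the cup product and the Fujiki relation, hence preserves $\langle\cdot,\cdot\rangle$. Combining this isometry property with the finite-index statement of Lemma~\ref{lem:H2STisptorsion} will give the result.

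First I would establish the inclusion $\Orth_G(X)\subseteq\Inv_G(X)^\perp$. Let $x\in\Inv_G(X)$ and $y\in\Orth_G(X)$. Since $gx=x$ and $g$ is an isometry, $\langle x,g^jy\rangle=\langle g^{-j}x,y\rangle=\langle x,y\rangle$ for every $j$. Summing over $j=0,\dots,p-1$ yields $0=\langle x,\sigma y\rangle=p\langle x,y\rangle$, and as $H^2(X,\IZ)$ is torsion-free we conclude $\langle x,y\rangle=0$. This inclusion holds for every prime $p$ and uses nothing beyond the isometry property.

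Next I would upgrade the inclusion to an equality by a rank count. By Lemma~\ref{lem:H2STisptorsion} (this is where the hypothesis $p\leq 19$ enters, through the Diederichsen--Reiner decomposition) the quotient $H^2(X,\IZ)/(\Inv_G(X)\oplus\Orth_G(X))$ is a finite $p$-torsion module, so $\Inv_G(X)\oplus\Orth_G(X)$ has finite index and $\rank\Inv_G(X)+\rank\Orth_G(X)=b_2(X)$. On the other hand, non-degeneracy of the Beauville--Bogomolov form gives $\rank\Inv_G(X)+\rank\Inv_G(X)^\perp=b_2(X)$, whence $\rank\Orth_G(X)=\rank\Inv_G(X)^\perp$. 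Both $\Orth_G(X)$ and $\Inv_G(X)^\perp$ are primitive in $H^2(X,\IZ)$, being kernels of integral endomorphisms; since $\Orth_G(X)\subseteq\Inv_G(X)^\perp\subseteq H^2(X,\IZ)$, the subgroup $\Inv_G(X)^\perp/\Orth_G(X)$ embeds into the torsion-free group $H^2(X,\IZ)/\Orth_G(X)$ and so is torsion-free, while the equality of ranks makes it finite. Hence it vanishes and $\Orth_G(X)=\Inv_G(X)^\perp$.

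Finally, for non-degeneracy I would identify the radical of the restriction of $\langle\cdot,\cdot\rangle$ to $\Inv_G(X)$ with $\Inv_G(X)\cap\Inv_G(X)^\perp=\Inv_G(X)\cap\Orth_G(X)$, which is $\{0\}$ because $H^2(X,\IZ)$ has no $p$-torsion (this intersection was already shown to vanish in the proof of Lemma~\ref{lem:H2STisptorsion}). A lattice with trivial radical is non-degenerate, so $\Inv_G(X)$ is non-degenerate. I expect no serious obstacle here: the only genuinely nontrivial input is the finite-index property of Lemma~\ref{lem:H2STisptorsion}, which is exactly where the restriction $p\leq 19$ is needed; the remaining steps are formal consequences of the isometry property, primitivity, and the rank count.
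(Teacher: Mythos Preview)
Your proof is correct and follows essentially the same approach as the paper: show $\Orth_G(X)\subset\Inv_G(X)^\perp$ via $G$-invariance of the form, invoke Lemma~\ref{lem:H2STisptorsion} for the rank equality, and deduce non-degeneracy from $\Inv_G(X)\cap\Orth_G(X)=\{0\}$. Your version is slightly more explicit in spelling out why equal ranks plus inclusion force equality (via primitivity of $\Orth_G(X)$), a step the paper leaves implicit.
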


\begin{proof}
We already know that $\Inv_G(X)\cap\Orth_G(X)=\{0\}$ since $H^2(X,\IZ)$ is torsion-free. 
Take  ${x\in\Inv_G(X)}$ and ${y\in\Orth_G(X)}$. Using the $G$-invariance of 
the bilinear form one gets $\langle x,y\rangle=\langle x,g^iy\rangle$ for all $i$, hence $p\langle x,y\rangle=\langle x,\sigma(y)\rangle=0$ 
so $\Orth_G(X)\subset \Inv_G(X)^\perp$. By Lemma~\ref{lem:H2STisptorsion} one has $\rank \Inv_G(X)+\rank\Orth_G(X)=b_2(X)$.
Since the lattice $H^2(X,\IZ)$ is non-degenerate, we also have $\rank \Inv_G(X)+\rank\Inv_G(X)^\perp=b_2(X)$. It
follows that $\Orth_G(X)= \Inv_G(X)^\perp$ and  that the lattice $\Inv_G(X)$ is non-degenerate.
\end{proof}

Assume that $G$ is non-symplectic. Then $X$ is algebraic~\cite{BeauvilleKaehler}. 
If $1$ is an eigenvalue of $g$ on $\Trans(X)\otimes\IC$ and $t\in\Trans(X)$ is an eigenvector, one computes:
$$
\langle \omega_X,t\rangle=\langle g^*\omega_X,g^*t\rangle=\xi_p\langle\omega_X,t\rangle
$$
so $t\in\Trans(X)\cap\NS(X)=\{0\}$, contradiction. One deduces that the eigenvalues of the action of $g$ on $\Trans(X)$ are primitive $p$-th roots of 
the unity, so $\Trans(X)\subset\Orth_G(X)$. The minimal polynomial of $g$ on $\Trans(X)$ is the cyclotomic polynomial $\Phi_p$ 
hence ${\varphi(p)\leq b_2(X)-\rho(X)}$. 

Assume now that $G$ is symplectic and $X$ is algebraic or of elliptic type. If $t\in\Trans(X)$, one gets:
$$
\langle \omega_X,t\rangle=\langle\omega_X,g^*t\rangle
$$
so $g^*t-t\in\omega_X^\perp$. Since $X$ is algebraic, $\Trans(X)\cap\NS(X)=\{0\}$ so $g^*t=t$.
Then $g$ acts trivially on $\Trans(X)$, hence $\Orth_G(X)\subset \NS(X)$ and $\varphi(p)\leq \rho(X)$. This property remains true for the parabolic type if $X$ is isomorphic to the Hilbert scheme~$S^{[n]}$ of $n$ points on a K3 surface~$S$ with the following argument.
There exists an injective morphism $\iota\colon H^2(S,\IC)\to H^2(S^{[n]},\IC)$ such that $H^2(S^{[n]},\IZ)=\iota\left(H^2(S,\IZ)\right)\oplus\IZ \delta$,
where $2\delta$ is the class of the exceptional divisor of $S^{[n]}$. After normalisation, the form~$q$ satisfies $q(\iota(\alpha))=\alpha^2$ for $\alpha\in H^2(S,\IZ)$, $q(\delta)=-2(n-1)$ and $\delta$ is orthogonal to~$\iota\left(H^2(S,\IZ)\right)$ (see~\cite[Proposition~6]{BeauvilleKaehler}). Observe that $\NS(S^{[n]})=\iota\left(\NS(S)\right)\oplus\IZ\delta$ and $\Trans(S^{[n]})=\iota\left(\Trans(S)\right)$.
Setting $F\coloneqq \Trans(S^{[n]})\cap\NS(S^{[n]})\cong\IZ$, the previous argument shows that $G$ acts trivially on $\Trans(S^{[n]})/F$. Let $c$ be a generator of $F$. Then $c$ is of the form $c=\iota(c_0)$ with $c_0\in\Trans(S)\cap\NS(S)$ and $q(c)=c_0^2=0$ hence by the Riemann--Roch theorem, $c_0$ or $-c_0$ is an effective divisor. Changing $c$ to $-c$ if necessary, one can assume that $c_0$ is effective. 
Then $c$ is also an effective divisor hence $g^*c=c$. So $g^*$  acts trivially on $F$ and $\Trans(S^{[n]})/F$ hence all its eigenvalues on $\Trans(S^{[n]})$ are equal to one. Since $g$ is of finite order, $g^*$ is diagonalisable hence finally $g^*$ acts as the identity on $\Trans(S^{[n]})$.

\subsection{K3 surfaces} 

In this section, we assume that $X$ is a K3 surface. If $G$ is non-symplectic, then $\rho(X)\leq 21$ so $p\leq 19$. Nikulin~\cite{Nikulin} proved that 
for a symplectic action, one has in fact $p\leq 7$. The lattice $H^2(X,\IZ)$ is unimodular, hence the lattices $\Inv_G(X)$ and $\Orth_G(X)$ are $p$-elementary. 
Indeed, the generator $g$ of $G$ acts trivially on the discriminant groups $A_{\Inv_G(X)}\cong A_{\Orth_G(X)}$ but $\sigma$ is zero on $A_{\Orth_G(X)}$ hence for all $x\in A_{\Orth_G(X)}$ 
one has $px=0$, so these discriminant groups are $p$-torsion groups. It follows that the integer $\aG(X)$ has an important 
characterization: 
$$
\disc(\Inv_G(X))=\disc(\Orth_G(X))=p^{\aG(X)},
$$ 

so $\aG(X)=a\left(\Inv_G(X)\right)$.

\begin{corollary}
Let $X$ be a K3 surface and $G$ a group of automorphisms of prime order $p$. Then one has: 
$$
h^*(X^G,\IF_p)=\begin{cases}
24-2\aG(X) & \text{if } p=2 \text{ and } X^G\neq\emptyset,\\
24-(p-2)\,\mG(X)-2\aG(X) & \text{if } p\geq 3.
\end{cases}
$$
\end{corollary}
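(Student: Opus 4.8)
The plan is to deduce the two formulas from the general Corollaries~\ref{cor:formulep=2} and~\ref{cor:formulepqqe}, the only real work being to verify their hypothesis that the spectral sequence of equivariant cohomology with coefficients in $\IF_p$ degenerates at the $E_2$-term. First I would record the cohomology of a K3 surface $X$: it is torsion-free, with $H^0(X,\IZ)\cong H^4(X,\IZ)\cong\IZ$, $H^1(X,\IZ)=H^3(X,\IZ)=0$ and $H^2(X,\IZ)\cong\IZ^{22}$. Consequently $h^\ast(X,\IF_p)=24$ for every prime $p$, the odd cohomology $H^\odd(X,\IF_p)$ vanishes, and $\dim_\IR X=4$. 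Moreover, since $H^1=H^3=0$ we have $\aG^1(X)=\aG^3(X)=\mG^1(X)=\mG^3(X)=0$, so each of the sums $\sum_{k=1}^{3}\aG^k(X)$ and $\sum_{k=1}^{3}\mG^k(X)$ appearing in those corollaries reduces to its single term $k=2$, namely $\aG(X)$ and $\mG(X)$ by the conventions of Section~\ref{ss:basicsIHS}.

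Next I would establish the degeneracy through Proposition~\ref{prop:degE2K3}. Its standing hypotheses $\dim_\IR X=4$ and $H^\odd(X,\IF_p)=0$ hold, so the only thing to check is that $X^G\neq\emptyset$. For $p=2$ this is exactly the assumption in the statement. For $p\geq 3$ I would show that the fixed locus is automatically non-empty by means of the holomorphic Lefschetz fixed point formula. Writing $g^\ast\omega_X=\xi_p\,\omega_X$ for a $p$-th root of unity $\xi_p$, the generator $g$ acts trivially on $H^0(X,\cO_X)\cong\IC$, on $H^1(X,\cO_X)=0$, and on $H^2(X,\cO_X)\cong\IC$ by $\overline{\xi_p}$; hence the holomorphic Lefschetz number of $g$ equals $1+\overline{\xi_p}$. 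If the action were free this number would vanish, forcing $\overline{\xi_p}=-1$ and thus $p=2$. Therefore for every $p\geq 3$ the map $g$ has a fixed point and Proposition~\ref{prop:degE2K3} applies.

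With the degeneracy established in all relevant cases, Corollary~\ref{cor:formulep=2} yields $h^\ast(X^G,\IF_2)=h^\ast(X,\IF_2)-2\aG(X)=24-2\aG(X)$ when $p=2$ and $X^G\neq\emptyset$, while Corollary~\ref{cor:formulepqqe} yields $h^\ast(X^G,\IF_p)=h^\ast(X,\IF_p)-2\aG(X)-(p-2)\mG(X)=24-2\aG(X)-(p-2)\mG(X)$ for $3\leq p\leq 19$; this covers all cases, since any automorphism of prime order of a K3 surface satisfies $p\leq 19$, as recalled at the beginning of this section.

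I expect the non-emptiness of $X^G$ for $p\geq 3$ to be the main obstacle. The mod $p$ congruence $\chi(X^G)\equiv\chi(X)=24\pmod p$ furnished by Smith theory already forces $\chi(X^G)\neq 0$, and hence $X^G\neq\emptyset$, whenever $p\geq 5$; but it is powerless for $p=3$ because $3\mid 24$, and it is precisely there that the holomorphic Lefschetz argument is indispensable. This is consistent with the fact that, for $p=2$, the free Enriques involution shows why the hypothesis $X^G\neq\emptyset$ cannot be dropped.
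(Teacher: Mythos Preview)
Your proposal is correct and follows essentially the same approach as the paper: verify the hypotheses of Proposition~\ref{prop:degE2K3} (in particular $X^G\neq\emptyset$, via the holomorphic Lefschetz number for $p\geq 3$) to obtain degeneracy of the spectral sequence, then read off the formula from Corollaries~\ref{cor:formulep=2} and~\ref{cor:formulepqqe}. The paper's write-up cites the underlying ingredients (Corollary~\ref{cor:XGlq}, Proposition~\ref{prop:decompCurtisReiner}, Corollary~\ref{cor:linkaGlp}) rather than the packaged Corollaries, and states the holomorphic Lefschetz step more tersely, but the argument is the same.
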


\begin{proof}
For $p=2$, if the group $G$ acts symplectically on $X$, by the holomorphic Lefschetz fixed point formula there are $8$ isolated fixed points. By Proposition~\ref{prop:degE2K3}, the spectral sequence of equivariant cohomology with coefficients in $\IF_p$ degenerates, so the formula is a consequence of Corollary~\ref{cor:XGlq}, Proposition~\ref{prop:decompCurtisReiner} and Corollary~\ref{cor:linkaGlp}. If $G$ acts non-symplectically and has fixed points, the same argument as above applies. Alternatively, one can observe that since $G$ acts locally at fixed points by quasi-reflections, by a result of Chevalley the quotient $X/G$ is smooth, so $H^\odd(X/G,\IF_p)=0$ and 
Proposition~\ref{prop:BorelSwan_p=2} (see below) gives the result. 

\par For $p\geq 3$, by the holomorphic Lefschetz fixed point formula the fixed locus is never empty so as above the spectral sequence degenerates and the result follows.
\end{proof}

\begin{remark}
This formula does not apply for $p=2$ when $X^G=\emptyset$: in this case, the quotient $X/G$ is 
an Enriques surface and it is well-known that $\aG(X)=10$.
\end{remark}

\begin{remark}
This formula due to Kharlamov~\cite{Kharlamov} for $p=2$ and Artebani--Sarti--Taki~\cite{AST} 
for $p\geq 3$. In both papers, it is obtained by using the classical theory of Smith sequences, 
under the assumption that $G$ acts non-symplectically. However, there is a small gap in the argument 
of Artebani--Sarti--Taki~\cite{AST}: if the fixed locus contains no isolated fixed points, 
then as above the quotient $X/G$ is smooth so $H^\odd(X/G,\IF_p)=0$ and as in~\cite{AST} 
Proposition~\ref{prop:BorelSwan_pqqe} below gives an equality. Otherwise the cohomology of the
 quotient may have $p$-torsion: for example, if the fixed locus contains only isolated fixed points, 
the long cohomology exact sequence of the pair $(X/G,X^G)$ gives $H^3(X/G,\IF_p)\cong\IF_p$ so 
Proposition~\ref{prop:BorelSwan_pqqe} gives only an inequality. Our argument using the degeneracy 
of the spectral sequence solves this problem.
\end{remark}

\subsection{Hilbert scheme of two points} 

Let $S$ be a K3 surface and assume that $X$ is deformation equivalent to the Hilbert scheme 
$S^{[2]}$ of two points on $S$. Then $1\leq \rho(X)\leq 21$ so $p\leq 23$.
Note that since $\rho(S^{[n]})\geq 2$, one has  $p\leq 19$ if $X=S^{[n]}$. If $X$ is deformation equivalent
to $S^{[n]}$ and $\rho(X)=1$ then the existence of non-symplectic automorphisms of order $p=23$ is not excluded.
For a symplectic action on a deformation of $S^{[2]}$, Mongardi~\cite{Mongardi2} shows that  $p\leq 11$ so higher
order automorphisms on deformations of $S^{[2]}$ are always non-symplectic. 

The following lemma is a direct generalisation of Nikulin's results~\cite{Nikulin}, 
very close from those stated in Mongardi~\cite{Mongardi1}. 

\begin{lemma}\label{lem:descriptionOrthInv} 
Assume that $X$ is deformation equivalent to $S^{[2]}$ and that $G$ is an order~$p$ group of automorphisms
 of $X$ with $3\leq p\leq 19$. Then the lattice~$\Orth_G(X)$ has discriminant $A_{\Orth_G(X)}\cong\left(\frac{\IZ}{p\IZ}\right)^{\aG(X)}$ 
and in the symplectic case contains no $(-2)$-classes. The invariant lattice $\Inv_G(X)$ has  discriminant
 $A_{\Inv_G(X)}\cong\left(\frac{\IZ}{2\IZ}\right)\oplus\left(\frac{\IZ}{p\IZ}\right)^{\aG(X)}$. 
Moreover, if $G$ acts symplectically then $\Orth_G(X)$ is negative definite of rank $(p-1)\mG(X)$ 
and $\Inv_G(X)$ has signature $(3,20-(p-1)\mG(X))$. If $G$ acts non-symplectically, then $\Orth_G(X)$ 
has signature $(2,(p-1)\mG(X)-2)$ and $\Inv_G(X)$ has signature $(1,22-(p-1)\mG(X))$.
\end{lemma}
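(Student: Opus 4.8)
The plan is to establish, in this order, the two discriminant groups, then the signatures, and finally the statement about $(-2)$-classes; only the discriminant computation is genuinely delicate.

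First I would record the ambient lattice. Since $X$ is deformation equivalent to $S^{[2]}$, the Beauville--Bogomolov lattice $L\coloneqq H^2(X,\IZ)$ is the deformation-invariant lattice $\iota(H^2(S,\IZ))\oplus\IZ\delta$ of Section~\ref{ss:basicsIHS} with $q(\delta)=-2$, so $L\cong U^{\oplus 3}\oplus E_8(-1)^{\oplus 2}\oplus\langle-2\rangle$, of signature $(3,20)$ with discriminant group $A_L\cong\IZ/2\IZ$. The crucial point is that, as $p$ is odd, $A_L$ has no $p$-torsion, so $L$ is unimodular at $p$. We have already seen that $\Inv_G(X)$ and $\Orth_G(X)=\Inv_G(X)^\perp$ are primitive complementary sublattices, and by Lemma~\ref{lem:H2STisptorsion} together with Definition~\ref{def:aG} the index satisfies $[L:\Inv_G(X)\oplus\Orth_G(X)]=p^{\aG(X)}$. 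The overlattice identity $\disc(\Inv_G(X))\cdot\disc(\Orth_G(X))=|A_L|\cdot p^{2\aG(X)}=2\,p^{2\aG(X)}$ then follows.

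For the $p$-part I would localise at $p$. Because $L$ is unimodular at $p$, Nikulin's theorem for primitive sublattices of unimodular lattices, applied $p$-adically, gives an isomorphism of $p$-parts $A_{\Inv_G(X)}^{(p)}\cong A_{\Orth_G(X)}^{(p)}$; since the only non-$p$ factor in the displayed product is the $2$, each of these has order exactly $p^{\aG(X)}$. On the other hand, primitivity of $\Orth_G(X)$ makes the projection $H\coloneqq L/(\Inv_G(X)\oplus\Orth_G(X))\to A_{\Inv_G(X)}$ injective, so its image is a subgroup of $A_{\Inv_G(X)}^{(p)}$ of order $p^{\aG(X)}$; comparing orders forces $A_{\Inv_G(X)}^{(p)}\cong(\IZ/p\IZ)^{\aG(X)}$, and symmetrically $A_{\Orth_G(X)}^{(p)}\cong(\IZ/p\IZ)^{\aG(X)}$.

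It then remains to locate the $2$-torsion. The same overlattice analysis on the prime-to-$p$ part (where $H$ is trivial) yields $A_{\Inv_G(X)}^{(p')}\oplus A_{\Orth_G(X)}^{(p')}\cong A_L^{(p')}=\IZ/2\IZ$, so exactly one summand carries the $\IZ/2\IZ$. Here I would use that $\Orth_G(X)$ is a free $\mathcal{O}_K$-module on which $\sigma=0$, whence $g$ endows $A_{\Orth_G(X)}$ with the structure of an $\mathcal{O}_K=\IZ[\xi_p]$-module; its $2$-torsion is a module over $\mathcal{O}_K/2\mathcal{O}_K$, whose residue fields all have degree equal to the order of $2$ in $(\IZ/p\IZ)^\times$, which is $\geq 2$ for $p\geq 3$ (equivalently $\Phi_p$ has no root modulo $2$). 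Thus any nonzero $2$-elementary $\mathcal{O}_K$-module has $\IF_2$-dimension $\geq 2$, incompatible with $A_{\Orth_G(X)}^{(2)}\hookrightarrow\IZ/2\IZ$, so $A_{\Orth_G(X)}^{(2)}=0$. This places the $\IZ/2\IZ$ in $A_{\Inv_G(X)}$, giving $A_{\Orth_G(X)}\cong(\IZ/p\IZ)^{\aG(X)}$ and $A_{\Inv_G(X)}\cong(\IZ/2\IZ)\oplus(\IZ/p\IZ)^{\aG(X)}$.

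The signatures follow from the eigenvalue analysis of Section~\ref{ss:basicsIHS}: since $\rank\Orth_G(X)=(p-1)\mG(X)$ (Definition~\ref{def:mG}) and $\rank L=23$, I only need to distribute the three positive directions of the form. In the symplectic case $\omega_X$ and a $G$-invariant K\"ahler class are fixed, so the entire positive part lies in $\Inv_G(X)\otimes\IR$, making $\Orth_G(X)$ negative definite of rank $(p-1)\mG(X)$ and $\Inv_G(X)$ of signature $(3,20-(p-1)\mG(X))$; in the non-symplectic case $g^\ast\omega_X=\xi_p\omega_X$ throws the positive $2$-plane spanned by the real and imaginary parts of $\omega_X$ into $\Orth_G(X)\otimes\IR$ and leaves a single invariant positive K\"ahler direction, yielding the signatures $(2,(p-1)\mG(X)-2)$ and $(1,22-(p-1)\mG(X))$. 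The remaining claim, absence of a $(-2)$-class in $\Orth_G(X)$ in the symplectic case, is the one step needing genuinely geometric input, which I would supply from the study of symplectic automorphisms of $K3^{[2]}$-type manifolds in Mongardi~\cite{Mongardi1,Mongardi2}: as $\Orth_G(X)$ carries no $G$-invariant vector, an effective $(-2)$-class would contradict $\sigma=0$ under the $G$-action. I expect this last point, and more subtly the recognition that the $p$-local unimodularity of $L$ is exactly what unlocks Nikulin's unimodular theory, to be the main hurdles; once that localisation is set up, the discriminant computation is essentially forced.
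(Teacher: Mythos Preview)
Your argument is correct, and for the signature statements and the $(-2)$-class reference it coincides with the paper. For the discriminant groups, however, you take a genuinely different route.

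The paper does not localise. It starts from the same index identity $\disc(\Inv_G(X))\cdot\disc(\Orth_G(X))=2\,p^{2\aG(X)}$, writes $\disc(\Orth_G(X))=2^{\epsilon}p^{\alpha}$ and $\disc(\Inv_G(X))=2^{1-\epsilon}p^{\beta}$, and uses Nikulin's description of the glue group $M=L/(\Inv_G(X)\oplus\Orth_G(X))$ as injecting into both discriminants to force $\alpha=\beta=\aG(X)$. The key step is then to show that $G$ acts \emph{trivially} on $A_{\Orth_G(X)}$: this is done by a short case analysis (either $M\cong A_{\Orth_G(X)}$ or $M\cong A_{\Inv_G(X)}$ with the other differing by $\IZ/2\IZ$) together with Remark~\ref{rem:H2STisGtrivial}, which says $M$ is a trivial $G$-module. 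Once $G$ acts trivially on $A_{\Orth_G(X)}$, the relation $\sigma=0$ on $\Orth_G(X)^\ast$ gives $p\cdot A_{\Orth_G(X)}=\sigma\cdot A_{\Orth_G(X)}=0$, so $A_{\Orth_G(X)}$ is $p$-elementary in one stroke, forcing $\epsilon=0$ and pinning down both discriminant groups simultaneously.

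Your approach instead separates the primes: at $p$ you use that $L$ is $p$-adically unimodular so Nikulin's glue isomorphism applies directly to the $p$-parts, and at $2$ you exploit the $\mathcal{O}_K$-module structure on $A_{\Orth_G(X)}^{(2)}$ together with the fact that $\mathcal{O}_K/2\mathcal{O}_K$ has no one-dimensional $\IF_2$-quotients. This is clean and avoids the paper's case split; on the other hand, the paper's triviality-of-the-$G$-action argument handles both primes at once and stays closer to elementary lattice theory. Both arguments ultimately hinge on the same fact, that $\sigma$ kills $\Orth_G(X)$, but package it differently: the paper reads it as ``trivial $G$-action $+$ $\sigma=0$ $\Rightarrow$ $p$-torsion'', while you read it as ``$\mathcal{O}_K$-module structure $\Rightarrow$ no small $2$-torsion''.
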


\begin{proof}
For the fact $\Orth_G(X)$ contains no $(-2)$ classes when the action is symplectic, see Mongardi~\cite{Mongardi1}.
By Lemma~\ref{lem:H2STisptorsion}, from the relation
$$
[H^2(X,\IZ):\Inv_G(X)\oplus\Orth_G(X)]^2=\disc(\Inv_G(X))\cdot \disc(\Orth_G(X))\cdot \disc(H^2(X,\IZ))^{-1}
$$
 we get the formula $\disc(\Inv_G(X))\cdot \disc(\Orth_G(X))=2\,p^{2\aG(X)}$, hence $\disc(\Orth_G(X))=2^{\epsilon}p^{\alpha}$ and $\disc(\Inv_G(X))=2^{1-\epsilon}p^{\beta}$
with $\epsilon\in\{0,1\}$ since $p$ is odd, with $\alpha+\beta=2\aG(X)$. 
Since the inclusion $\Inv_G(X)\subset H^2(X,\IZ)$ is primitive, as explained in 
Nikulin~\cite[\S 5]{Nikulinfactor} the inclusion 
$$
M:=\frac{H^2(X,\IZ)}{\Inv_G(X)\oplus \Orth_G(X)}\subset A_{\Inv_G(X)}\oplus A_{\Orth_G(X)}
$$
is such that the projections $p\colon M\to A_{\Inv_G(X)}$ and $q\colon M\to A_{\Orth_G(X)}$ 
are $G$-equivariant monomorphisms. We deduce that $\aG(X)\leq\alpha$ and $\aG(X)\leq\beta$. 
This shows that $\alpha=\beta=\aG(X)$.

 We show now that  $G$ acts trivially on $A_{\Orth_G(X)}$. 
There are two possibilities: 
\begin{enumerate}
\item\label{itemi} $M\cong  A_{\Inv_G(X)}$ and $A_{\Orth_G(X)}/M\cong\IZ/2\IZ$,
\item\label{itemii} $M\cong  A_{\Orth_G(X)}$ and $ A_{\Inv_G(X)}/M\cong\IZ/2\IZ$. 
\end{enumerate}
By Remark~\ref{rem:H2STisGtrivial}, $M\cong \left(\frac{\IZ}{p\IZ}\right)^{\aG(X)}$ is a trivial 
$G$-module so in case (\ref{itemii}) the result is clear.
In case (\ref{itemi}) one has a $G$-equivariant inclusion
$$
M=\left(\frac{\IZ}{p\IZ}\right)^{\aG(X)}\to\left(\frac{\IZ}{2\IZ}\right)\oplus\left(\frac{\IZ}{p\IZ}\right)^{\aG(X)}=A_{\Orth_G(X)}.
$$                                                                                                                 
Since $p$ is odd, this map is trivial on the first factor. Since $M$ is a trivial 
$G$-module this shows that $G$ acts trivially on $A_{\Orth_G(X)}$. 

Since $\Orth_G(X)=\Ker(\sigma)$ and $G$ acts trivially on $A_{\Orth_G(X)}$ it follows that $\Orth_G(X)$ 
is $p$-elementary so $\epsilon=0$. This shows that case (\ref{itemi}) cannot occur so we have $M\cong A_{\Orth_G(X)}\cong \left(\frac{\IZ}{p\IZ}\right)^{\aG(X)}$
and $A_{\Inv_G(X)}\cong\left(\frac{\IZ}{2\IZ}\right)\oplus\left(\frac{\IZ}{p\IZ}\right)^{\aG(X)}$. 

If $G$ acts symplectically, the invariant lattice $\Inv_G(X)\otimes_\IZ\IC$ contains the symplectic form $\omega_X$, 
its conjugate $\overline{\omega_X}$ and an invariant K\"ahler 
class. Since $H^2(X,\IZ)$ has signature $(3,20)$, this implies that $\Inv_G(X)$ has signature $(3,20-(p-1)\mG(X))$ and $\Orth_G(X)$ has signature $(0,(p-1)\mG(X))$.
If $G$ acts non symplectically, then $\omega_X,\overline{\omega_X}\in\Orth_G(X)\otimes\IC$ and $\Inv_G(X)$ still contains an invariant K\"ahler class, hence $\Orth_G(X)$ has signature $(2,(p-1)\mG(X)-2)$ and $\Inv_G(X)$ has signature $(1,22-(p-1)\mG(X))$.
\end{proof}

Markman~\cite{Markman} proved that 
$H^\text{odd}(X,\IZ)=0$ and $H^\text{even}(X,\IZ)$ is torsion-free, 
and Verbitsky~\cite{Verbitsky} proved that the cup product map 
${\Sym^2 H^2(X,\IQ)\to H^4(X,\IQ)}$ is an isomorphism. 
We first study  the embedding ${\Sym^2H^2(X,\IZ)\hookrightarrow H^4(X,\IZ)}$.

\begin{proposition} \label{prop:indiceS2H4}
If $X$ is deformation equivalent to $S^{[2]}$, then: $$
\frac{H^4(X,\IZ)}{\Sym^2H^2(X,\IZ)}\cong\left(\frac{\IZ}{2\IZ}\right)^{\oplus 23}\oplus\left(\frac{\IZ}{5\IZ}\right).
$$ 
\end{proposition}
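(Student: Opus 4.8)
The plan is to endow $\Sym^2 H^2(X,\IZ)$ with the lattice structure pulled back along the cup-product embedding and to read off the cokernel from a discriminant computation. By Markman's theorem $H^\ast(X,\IZ)$ is torsion-free and concentrated in even degrees, so Poincar\'e duality makes the cup product $H^4(X,\IZ)\times H^4(X,\IZ)\to H^8(X,\IZ)\cong\IZ$ a \emph{unimodular} symmetric form. By Verbitsky's theorem the cup-product map $j\colon \Sym^2 H^2(X,\IZ)\to H^4(X,\IZ)$ is injective with finite cokernel $Q$, and both lattices have rank $\binom{24}{2}=276=b_4(X)$. Writing $L\coloneqq H^2(X,\IZ)$ with Beauville--Bogomolov form $q$, the Fujiki relation for $S^{[2]}$ (Fujiki constant $3$) polarises to $\int_X a\,b\,c\,d=q(a,c)q(b,d)+q(a,d)q(b,c)+q(a,b)q(c,d)$, so $j$ pulls the Poincar\'e pairing back to $\langle a\,b,\,c\,d\rangle=q(a,c)q(b,d)+q(a,d)q(b,c)+q(a,b)q(c,d)$ on $\Sym^2 L$. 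Since the target is unimodular, $Q$ embeds as a maximal isotropic subgroup of the discriminant group $A\coloneqq(\Sym^2 L)^\ast/\Sym^2 L$, and $\lvert Q\rvert^2=\lvert\disc(\Sym^2 L,\langle\cdot,\cdot\rangle)\rvert$.

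First I would compute this discriminant. Diagonalising $q$ over $\IQ$ as $\perp_{i=1}^{n}\langle a_i\rangle$ with $n=\rank L=23$, the basis $\{e_i^2\}\cup\{e_ie_j\}_{i<j}$ becomes orthogonal \emph{except} on the span of the $e_i^2$: the vectors $e_ie_j$ ($i<j$) are diagonal with entries $a_ia_j$, while the $e_i^2$ span a block $D(2I+J)D$ with $D=\mathrm{diag}(a_i)$ and $J$ the all-ones matrix. Since $2I_n+J_n$ has eigenvalues $n+2$ and $2$ (multiplicity $n-1$), a change-of-basis bookkeeping using $\det(\Sym^2 M)=(\det M)^{\,n+1}$ gives
$$
\disc(\Sym^2 L,\langle\cdot,\cdot\rangle)=(\det G)^{\,n+1}\cdot 2^{\,n-1}\cdot(n+2),
$$
where $G$ is a Gram matrix of $L$. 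As $L\cong U^{3}\oplus E_8(-1)^{2}\oplus\langle-2\rangle$ has $\det G=2$ and $n+2=25=5^2$, this equals $2^{46}\cdot 5^{2}$. Hence $\lvert Q\rvert=2^{23}\cdot 5$; in particular the $5$-primary part of $Q$ has order $5$ and is automatically $\IZ/5\IZ$. (The factor $5$ is exactly $\sqrt{b_2(X)+2}$.)

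It remains to identify the $2$-primary part of $Q$, and the key reduction is that it suffices to prove $A$ has elementary-abelian $2$-part: then $Q$, being a subgroup of $A$ whose $2$-part has order $2^{23}$, is forced to be $(\IZ/2\IZ)^{23}$ there, and together with $Q_{(5)}\cong\IZ/5\IZ$ this yields $Q\cong(\IZ/2\IZ)^{23}\oplus(\IZ/5\IZ)$. To control $A$, I would use the orthogonal splitting $L=\langle\delta\rangle\perp\Lambda$ with $q(\delta)=-2$ and $\Lambda\coloneqq U^{3}\oplus E_8(-1)^{2}$ even unimodular of rank $22$. Because $q(\delta,\Lambda)=0$, the form $\langle\cdot,\cdot\rangle$ splits $\Sym^2 L$ orthogonally as $(\langle\delta\rangle\otimes\Lambda)\perp B$, where the first summand carries $-2\,q|_\Lambda$, i.e. it is $\Lambda(-2)$, and $B$ is spanned by $\delta^2$ together with $\Sym^2\Lambda$. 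Since $\Lambda$ is unimodular, $\Lambda(-2)$ has discriminant group $\tfrac12\Lambda/\Lambda\cong(\IZ/2\IZ)^{22}$, already $2$-elementary; so the whole question is whether the $2$-part of the discriminant group of $B$ is elementary abelian.

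The main obstacle is precisely this $2$-adic analysis of $B$, whose Gram matrix is $\bigl(\begin{smallmatrix}12 & -2\theta^{T}\\ -2\theta & P_0+\theta\theta^{T}\end{smallmatrix}\bigr)$, with $P_0$ the standard form on $\Sym^2\Lambda$ and $\theta(\lambda\mu)=q(\lambda,\mu)$ the (primitive) contraction functional. I would decompose $\Lambda\otimes\IZ_2$ into hyperbolic planes (plus at most one extra even unimodular rank-$2$ block), inducing a $2$-adic orthogonal decomposition of $B$ into blocks of bounded rank; on each small block one computes the $2$-adic Jordan type directly and checks that every component has scale $1$ or $2$, never $4$. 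The rank-$3$ model $L=\langle-2\rangle\perp U$, where the non-unimodular part reduces to $\bigl(\begin{smallmatrix}12&-2\\-2&2\end{smallmatrix}\bigr)\perp\bigl(\begin{smallmatrix}0&2\\2&0\end{smallmatrix}\bigr)$ with $2$-elementary discriminant $2$-part, is the prototype. This verifies $2A_{(2)}=0$ and completes the proof. The delicate point throughout is tracking the factor-of-$2$ discrepancies from the symmetrisation $L\otimes L\to\Sym^2 L$, which is exactly what produces the large elementary-abelian $2$-group and where an error would most easily creep in.
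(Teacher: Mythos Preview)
Your discriminant computation is the same as the paper's: both define the O'Grady pairing on $\Sym^2 H^2(X,\IZ)$, identify the cokernel order with $\sqrt{\lvert\disc\rvert}$, and compute $\disc=2^{46}\cdot 5^2$. The paper's proof literally stops there with ``the result follows'', so strictly speaking it only establishes the \emph{order} $2^{23}\cdot 5$; the full group structure is obtained in the subsequent Remark by an explicit Smith normal form calculation in the Qin--Wang integral basis (computer-aided). Your attempt to deduce the structure conceptually, by showing that the discriminant group $A$ of $\Sym^2 L$ has elementary-abelian $2$-part, is therefore a genuine addition---and the conclusion is correct---but the argument you sketch has a real gap.

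The problem is your claim that a hyperbolic decomposition $\Lambda\otimes\IZ_2\cong U_1\perp\cdots\perp U_{11}$ ``induces a $2$-adic orthogonal decomposition of $B$ into blocks of bounded rank''. It does not: writing $U_i=\langle e_i,f_i\rangle$, the $\theta\theta^T$ term gives $\langle e_if_i,\,e_jf_j\rangle=q(e_i,f_i)q(e_j,f_j)=1$ for $i\neq j$, so the ``trace'' vectors $e_if_i$ from different hyperbolic planes are all coupled. What actually happens is an orthogonal splitting of $B$ (over $\IZ_2$) into: the unimodular cross-term blocks $U_i\otimes U_j$ ($i<j$); eleven copies of $U(2)$ spanned by $e_i^2,f_i^2$; and a single $12\times12$ block $M$ on $\langle\delta^2,e_1f_1,\ldots,e_{11}f_{11}\rangle$ with Gram matrix
\[
M=\begin{pmatrix}12 & -2\,\mathbf 1^{T}\\ -2\,\mathbf 1 & I_{11}+J_{11}\end{pmatrix},\qquad \det M=4\cdot 25.
\]
Your rank-$3$ prototype is the case of one hyperbolic plane, where this coupling is invisible. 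To finish your argument you must show that the $2$-part of $M^{*}/M$ is $(\IZ/2\IZ)^2$ rather than $\IZ/4\IZ$. This holds precisely because $11$ is odd: modulo $2$ the first row/column of $M$ vanish and $I_k+J_k$ has a kernel over $\IF_2$ if and only if $k$ is odd, so $\rank_{\IF_2}M=k-1$ and the two factors of $2$ in $\det M$ go to two separate invariant factors. (For $k=2$ one gets $\IZ/4\IZ$, so the claim is genuinely arithmetic.) With this block handled, the $2$-part of $A$ is indeed $(\IZ/2\IZ)^{22}\oplus(\IZ/2\IZ)^{22}\oplus(\IZ/2\IZ)^2$ and your deduction of $Q_{(2)}\cong(\IZ/2\IZ)^{23}$ goes through.
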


\begin{proof}
It is enough to do the computation for $X=S^{[2]}$. Following an observation of O'Grady~\cite{OGrady}, we define on $\Sym^2H^2(X,\IZ)$ a bilinear symmetric pairing by
$$
\ll \alpha_1\odot\alpha_2,\alpha_3\odot\alpha_4\gg:=\langle\alpha_1,\alpha_2\rangle\langle\alpha_3,\alpha_4\rangle+\langle\alpha_1,\alpha_3\rangle\langle\alpha_2,\alpha_4\rangle+\langle\alpha_1,\alpha_4\rangle\langle \alpha_2,\alpha_3\rangle
$$
for $\alpha_1,\alpha_2,\alpha_3\alpha_4\in H^2(X,\IZ)$. Note that for any $\alpha\in H^2(X,\IZ)$ one has 
$$
\ll\alpha\odot\alpha,\alpha\odot\alpha\gg=3\langle\alpha,\alpha\rangle=\int_{X}\alpha^4.
$$
It follows that $(\Sym^2H^2(X,\IZ),\ll\cdot,\cdot\gg)$ is a sublattice of $H^4(X,\IZ)$ equipped with the Poincar\'e pairing. Since this lattice is unimodular, one has
$$
[H^4(X,\IZ),\Sym^2 H^2(X,\IZ)]^2=\disc(\Sym^2 H^2(X,\IZ)).
$$
Using that $H^2(X,\IZ)$ is isometric to $U^{\oplus 3}\oplus E_8(-1)^{\oplus 2}\oplus\langle -2\rangle$ it is easy to compute that the $\Sym^2 H^2(X,\IZ)$ has discriminant $\disc(\Sym^2 H^2(X,\IZ))=2^{46}\cdot 5^2$.  The result follows.
\end{proof}

\begin{remark} In order to understand the (somehow surprising) $5$-torsion class, one can perform an explicit computation as follows. For $\alpha\in H^*(S,\IZ)$ and $i\in\IZ$, we denote by $\kq_i(\alpha)\in\End(H^*(X,\IZ))$ the Nakajima operators~\cite{Nakajima} and by $\vac\in H^0(S^{[0]},\IZ)$ the unit. Let $(\alpha_i)_{i=1,\ldots,22}$ be an integral basis of $H^2(S,\IZ)$, denote by $1\in H^0(S,\IZ)$ the unit and by $x\in H^4(S,\IZ)$ the class of a point. The results of Qin--Wang~\cite[Theorem 5.4, Remark 5.6]{QinWang} give the following integral basis:
\begin{itemize}
\item integral basis of $H^2(X,\IZ)$: $\frac{1}{2}\kq_2(1)\vac$, $\kq_1(1)\kq_1(\alpha_i)\vac$,
\item integral basis of $H^4(X,\IZ)$: 
\begin{align*}
\kq_1(1)\kq_1(x)\vac,\quad \kq_2(\alpha_i)\vac,\quad\kq_1(\alpha_i)\kq_1(\alpha_j)\vac \text{ with } i<j, \\
\km_{1,1}(\alpha_i)\vac=\frac{1}{2}\left(\kq_1(\alpha_i)^2-\kq_2(\alpha_i)\right)\vac.
\end{align*}
\end{itemize}
The cup product map $\Sym^2 H^2(X,\IQ)\to H^4(X,\IQ)$ can be computed explicitly by using the algebraic model constructed by 
Lehn--Sorger~\cite{LehnSorger}: 

\par$\bullet$ for $\alpha\in H^2(S,\IZ)$: $\frac{1}{2}\kq_2(1)\vac\cup \kq_1(1)\kq_1(\alpha)\vac=\kq_2(\alpha)\vac$,
\par$\bullet$ for $\alpha,\beta\in H^2(S,\IZ)$:
$$
\kq_1(1)\kq_1(\alpha)\vac\cup\kq_1(1)\kq_1(\beta)\vac=\left(\int_S \alpha\beta\right)\kq_1(1)\kq_1(x)\vac+\kq_1(\alpha)\kq_1(\beta)\vac,
$$
\par$\bullet$ denote by  $\delta\colon S\to S\times S$ the diagonal embedding. We denote  the push-forward map followed by the K\"unneth isomorphism by ${\delta_*\colon H^*(S,\IZ)\to H^*(S,\IZ)\otimes H^*(S,\IZ)}$. Writing
$\delta_* 1=\sum_{i,j} \mu_{i,j}\alpha_i\otimes\alpha_j+1\otimes x+x\otimes 1$ for some $\mu_{i,j}\in\IZ$ with the property that $\mu_{i,j}=\mu_{j,i}$, one has:
$$
\frac{1}{2}\kq_2(1)\vac\cup\frac{1}{2}\kq_2(1)\vac=\sum_{i<j}\mu_{i,j}\kq_1(\alpha_i)\kq_1(\alpha_j)\vac+\frac{1}{2}\sum_i \mu_{i,i}\kq_1(\alpha_i)^2\vac+\kq_1(1)\kq_1(x)\vac.
$$

An elementary computation (with the help of a computer) allows to determine the $253$ coefficients $\mu_{i,j}$, by using the fact that $\delta_*$ is the adjoint of the cup-product and the intersection matrix of $H^2(S,\IZ)\cong U^{\oplus 3}\oplus E_8(-1)^{\oplus 2}$ (where $U$ denotes the rank $2$ hyperbolic lattice). One can then express the basis of $\Sym^2H^2(X,\IZ)$ in terms of the given integral basis of $H^4(X,\IZ)$ and compute the Smith normal form of the quotient $\frac{H^4(X,\IZ)}{\Sym^2H^2(X,\IZ)}$. One finds $\left(\frac{\IZ}{2\IZ}\right)^{\oplus 22}\oplus\left(\frac{\IZ}{10\IZ}\right)$. 

A precise look at the computation of the Smith normal form shows that the $2$-divisible classes in $\Sym^2 H^2(X,\IZ)$ are the six vectors $\km_{1,1}(\alpha_i)\vac$ for the basis elements $\alpha_i\in U^{\oplus 3}$ and the $16$ vectors $\km_{1,1}(\alpha_i)\vac-\kq_1(1)\kq_1(x)\vac$ for the basis elements $\alpha_i\in E_8(-1)^{\oplus 2}$. Finally the $10$-divisible class 
is $\kq_1(1)\kq_1(x)\vac$. 
\end{remark}

\begin{remark} 
If $X$ is deformation equivalent to $S^{[n]}$ with $n\geq 4$, Markman~\cite[Theorem 1.10]{Markman2} (see also \cite[Theorem 9.3]{MarkmanTorelli}) proved that the quotient $\frac{H^4(S^{[n]},\IZ)}{\Sym^2H^2(S^{[n]},\IZ)}$ is 
free of rank $24$. The case $n=3$ remains to be computed.
\end{remark}

Let $S$ be a K3 surface and $G$ a finite group of automorphisms of prime order $p$ on $S^{[2]}$. 
We give some degeneracy results of the spectral sequence of equivariant cohomology with coefficients in $\IF_p$.

\begin{lemma}\label{lem:DeligneForE} Denote by $e\coloneqq 2\delta$ the class of the exceptional divisor of $S^{[2]}$. Then $\int_{S^{[2]}}e^4=2^6\cdot 3$
and the map $H^2(S^{[2]},\IZ)\to H^6(S^{[2]},\IZ),\beta\mapsto e^2\beta$ has discriminant~$2^{70}\cdot 3$.
\end{lemma}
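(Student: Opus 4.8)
The plan is to reduce both assertions to the Beauville--Bogomolov form via the Fujiki relation, and then to read off the two numbers from a single rank-one determinant computation. It suffices to work on $X=S^{[2]}$ itself. Recall that $\delta$ is orthogonal to $\iota(H^2(S,\IZ))$ with $q(\delta)=-2$, so for $e=2\delta$ one has $q(e)=4q(\delta)=-8$. The Fujiki relation in the $K3^{[2]}$-deformation class reads $\int_X\alpha^4=3\,q(\alpha)^2$; its polarization is exactly O'Grady's form used in Proposition~\ref{prop:indiceS2H4}, namely $\int_X\alpha_1\alpha_2\alpha_3\alpha_4=\langle\alpha_1,\alpha_2\rangle\langle\alpha_3,\alpha_4\rangle+\langle\alpha_1,\alpha_3\rangle\langle\alpha_2,\alpha_4\rangle+\langle\alpha_1,\alpha_4\rangle\langle\alpha_2,\alpha_3\rangle$. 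Evaluating at $\alpha=e$ gives $\int_X e^4=3\,q(e)^2=3\cdot 64=2^6\cdot 3$, which is the first assertion.

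For the second assertion I would first re-interpret the \emph{discriminant of the map}. The $\IZ$-linear map $L\colon H^2(X,\IZ)\to H^6(X,\IZ)$, $\beta\mapsto e^2\beta$, goes between free modules of the same rank $23$, and by definition its discriminant is $\lvert\det L\rvert=\lvert\coker L\rvert$. I claim this equals the discriminant of the symmetric bilinear form $B(\beta,\gamma)\coloneqq\int_X e^2\beta\gamma$ on $H^2(X,\IZ)$. Indeed, since $H^\ast(X,\IZ)$ is torsion-free (Markman~\cite{Markman}), integral Poincar\'e duality yields a \emph{unimodular} pairing $H^6(X,\IZ)\times H^2(X,\IZ)\to H^8(X,\IZ)\cong\IZ$; choosing a $\IZ$-basis of $H^2(X,\IZ)$ together with its Poincar\'e-dual basis of $H^6(X,\IZ)$, the matrix of $L$ in these bases is precisely the Gram matrix of $B$, whence $\lvert\det L\rvert=\lvert\det B\rvert$.

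It remains to compute $\det B$. Polarizing the Fujiki relation with $\alpha_1=\alpha_2=e$ gives $B(\beta,\gamma)=q(e)\langle\beta,\gamma\rangle+2\langle e,\beta\rangle\langle e,\gamma\rangle=-8\langle\beta,\gamma\rangle+2\langle e,\beta\rangle\langle e,\gamma\rangle$. Writing $Q$ for the Gram matrix of $\langle\cdot,\cdot\rangle$ and $e$ for the coordinate (column) vector of the class $e$, the Gram matrix of $B$ is $-8Q+2(Qe)(Qe)^{T}=Q\bigl(-8I+2ee^{T}Q\bigr)$. The matrix $2ee^{T}Q$ is a rank-one update with $\trace\bigl(2ee^{T}Q\bigr)=2\,e^{T}Qe=2q(e)=-16$, so the matrix determinant lemma gives $\det\bigl(-8I+2ee^{T}Q\bigr)=(-8)^{23}\bigl(1-\tfrac14 e^{T}Qe\bigr)=(-8)^{23}\cdot 3$. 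Since $H^2(X,\IZ)\cong U^{\oplus 3}\oplus E_8(-1)^{\oplus 2}\oplus\langle-2\rangle$ has $\lvert\det Q\rvert=\disc(H^2(X,\IZ))=2$, we obtain $\lvert\det B\rvert=\lvert\det Q\rvert\cdot 3\cdot 8^{23}=2\cdot 3\cdot 2^{69}=2^{70}\cdot 3$, as claimed.

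The only genuinely delicate point is the identification $\lvert\det L\rvert=\lvert\det B\rvert$: it rests entirely on the unimodularity of integral Poincar\'e duality, i.e.\ on the torsion-freeness of $H^\ast(X,\IZ)$, which is precisely Markman's theorem quoted above. Once this is granted, both numbers drop out of the Fujiki relation together with the one-line rank-one determinant computation, and I expect no further obstacle; the value $\lvert\det Q\rvert=2$ is immediate from the explicit lattice $U^{\oplus 3}\oplus E_8(-1)^{\oplus 2}\oplus\langle-2\rangle$.
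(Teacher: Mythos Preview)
Your proof is correct and follows essentially the same route as the paper: both use the Fujiki relation $\int_X\alpha^4=3\,q(\alpha)^2$ and its polarization to obtain $\int_X e^2\beta\gamma=-8\langle\beta,\gamma\rangle+2\langle e,\beta\rangle\langle e,\gamma\rangle$, and both identify the discriminant of $L$ with that of this bilinear form via integral Poincar\'e duality. The only (cosmetic) difference is the final determinant: the paper exploits the orthogonal splitting $H^2(X,\IZ)=\iota(H^2(S,\IZ))\oplus\IZ\delta$ to read off the lattice as $\Lambda(-8)\oplus\langle 48\rangle$ directly, whereas you compute the same number via the matrix determinant lemma --- both one-line computations yielding $2^{70}\cdot 3$.
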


\begin{proof}
By Beauville \cite{Beauvillec1Nul} and Fujiki \cite{Fujiki} there exists a constant $c_{S^{[2]}}$ such that 
$$
\int_{S^{[2]}}e^4=c_{S^{[2]}}\,q(e)^2. 
$$
One has $q(e)=-8$ and by Markushevich \cite[Proposition~1.2]{Markushevich} we have $c_{S^{[2]}}=3$, so $\int_{S^{[2]}}e^4=2^6\cdot 3$. 
The multiplication map $H^2(S^{[2]},\IZ)\to H^6(S^{[2]},\IZ),\beta\mapsto e^2\beta$ is 
equivalent by Poincar\'e duality to the bilinear form $H^2(S^{[2]},\IZ)\times H^2(S^{[2]},\IZ)\to\IZ$,
 $(\alpha,\beta)\mapsto \int_{S^{[2]}}\alpha e^2\beta$.
From the formal relation in $\IZ[x,y,z]$
$$
\int_{S^{[2]}}(\alpha x+e y+\beta z)^4=3\,q(\alpha x+e y+\beta z)^2,
$$
and by extracting the coefficient of $xy^2z$ we get the formula
$$
\int_{S^{[2]}} \alpha e^2 \beta=-8\,\langle\alpha,\beta\rangle+2\,\langle\alpha,e\rangle\, \langle e,\beta\rangle. 
$$
Denoting by $\Lambda\coloneqq E_8(-1)^{\oplus 2}\oplus U^{\oplus 3}$ the K3 lattice (the lattice $H^2(S,\IZ)$ for the intersection product),
the lattice $H^2(S^{[2]},\IZ)\cong H^2(S,\IZ)\oplus\IZ\delta$ equiped with this bilinear form is then isometric to 
$\Lambda(-8)\oplus \langle 48\rangle$, so its discriminant is $2^{70}\cdot 3$.
\end{proof}

\begin{lemma}\label{lem:DeligneForc2} Denote by $c_2(S^{[2]})$ the second Chern class of the tangent bundle on~$S^{[2]}$. Then
$\int_{S^{[2]}}c_2(S^{[2]})^2=2^2\cdot 3^2\cdot 23$ and the multiplication map 
$$
H^2(S^{[2]},\IZ)\to H^6(S^{[2]},\IZ),\beta\mapsto c_2(S^{[2]})\beta
$$ 
has discriminant~$^{24}\cdot 3^{23}\cdot 5^{23}$.
\end{lemma}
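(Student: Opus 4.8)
The plan is to mirror the proof of Lemma~\ref{lem:DeligneForE}, handling the two assertions separately. For the first, I would extract $\int_{S^{[2]}}c_2(S^{[2]})^2$ from the Todd genus: since $S^{[2]}$ is a holomorphic symplectic fourfold its first Chern class vanishes, so the top Todd class reduces to $\td_4=\frac{1}{720}\left(3c_2^2-c_4\right)$, and Hirzebruch--Riemann--Roch gives $\chi(\cO_{S^{[2]}})=\frac{1}{720}\left(3\int_{S^{[2]}}c_2^2-\int_{S^{[2]}}c_4\right)$. The Hodge numbers of $S^{[2]}$ give $\chi(\cO_{S^{[2]}})=3$, and $\int_{S^{[2]}}c_4$ is the topological Euler number $324$; solving the resulting linear equation yields $\int_{S^{[2]}}c_2^2=\frac{2160+324}{3}=828=2^2\cdot 3^2\cdot 23$.

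For the multiplication map I would again pass through Poincar\'e duality: under the perfect pairing $H^2\times H^6\to\IZ$ the map $\beta\mapsto c_2(S^{[2]})\beta$ becomes the symmetric bilinear form $B(\alpha,\beta)\coloneqq\int_{S^{[2]}}c_2(S^{[2]})\,\alpha\,\beta$ on $H^2(S^{[2]},\IZ)$, and its discriminant is the determinant of the Gram matrix of $B$. The decisive structural point, and the only genuinely conceptual step, is that $c_2(S^{[2]})$ is a characteristic class of the tangent bundle and hence invariant under the monodromy group acting on $H^2$. As this group is Zariski dense in the orthogonal group of the Beauville--Bogomolov form, the generalized Fujiki relation applies: the invariant quadratic form $\alpha\mapsto\int_{S^{[2]}}c_2(S^{[2]})\alpha^2$ must be a scalar multiple of $q$. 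Thus $B(\alpha,\beta)=\lambda\,\langle\alpha,\beta\rangle$ for a single constant $\lambda\in\IQ$, in sharp contrast to the class $e^2$ of Lemma~\ref{lem:DeligneForE}, whose lack of monodromy invariance forced the extra rank-one correction term.

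To pin down $\lambda$ I would apply Hirzebruch--Riemann--Roch to a line bundle $L$ with $c_1(L)=\alpha$. Using $c_1=0$, $\td_2=\frac{1}{12}c_2$ and the value $\chi(\cO_{S^{[2]}})=3$ above, the degree-four part of $e^{\alpha}\td(S^{[2]})$ integrates to $\chi(S^{[2]},L)=\frac{1}{24}\int\alpha^4+\frac{1}{24}\int c_2\alpha^2+3=\frac{1}{8}q(\alpha)^2+\frac{\lambda}{24}q(\alpha)+3$, where the Fujiki relation $\int\alpha^4=3q(\alpha)^2$ from Lemma~\ref{lem:DeligneForE} has been inserted. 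Comparing this with the Riemann--Roch polynomial $\chi(S^{[2]},L)=\frac{1}{8}(q(\alpha)+4)(q(\alpha)+6)$ of the Hilbert square (equivalently, evaluating against a single line bundle of known Euler characteristic with $q(\alpha)\neq 0$) forces $\lambda=30$.

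Finally I would assemble the discriminant. Since $H^2(S^{[2]},\IZ)$ has rank $23$ and the Beauville--Bogomolov lattice $U^{\oplus 3}\oplus E_8(-1)^{\oplus 2}\oplus\langle -2\rangle$ has discriminant $2$, the Gram matrix of $B=\lambda\,\langle\cdot,\cdot\rangle$ has determinant $\lambda^{23}\cdot 2=30^{23}\cdot 2=2^{24}\cdot 3^{23}\cdot 5^{23}$, the asserted discriminant. The main obstacle here is not the structural argument, which is immediate from monodromy invariance, but the numerical input: one must independently supply $\int_{S^{[2]}}c_4=324$, the Hodge-number computation $\chi(\cO_{S^{[2]}})=3$, and the Riemann--Roch polynomial that determines $\lambda=30$. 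Each of these is classical for the Hilbert square but lies outside the lattice-theoretic machinery developed in this paper, so care must be taken to cite them correctly and to verify that the two assertions are mutually consistent (both rest on the Euler number $324$).
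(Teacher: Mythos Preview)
Your proposal is correct and arrives at the same endpoint as the paper, but the route to the proportionality constant differs. The paper does not argue via monodromy invariance; instead it quotes Nieper-Wi{\ss}kirchen's identities
\[
\int_{S^{[2]}} c_2(S^{[2]})\,\alpha^2 = 96\,\lambda(\alpha)\,\td^{1/2}(S^{[2]}),\qquad
\int_{S^{[2]}}\alpha^4 = 24\,\lambda(\alpha)^2\,\td^{1/2}(S^{[2]}),
\]
computes the characteristic number $\td^{1/2}(S^{[2]})=25/32$ explicitly from $c_2^2=828$ and $c_4=324$, and eliminates $\lambda(\alpha)$ to obtain $\bigl(\int c_2\alpha^2\bigr)^2=900\,q(\alpha)^2$, hence $\int c_2\alpha^2=\pm 30\,q(\alpha)$. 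Your determination of $\lambda=30$ via the Riemann--Roch polynomial $\chi(L)=\tfrac{1}{8}(q(\alpha)+4)(q(\alpha)+6)$ is an equally valid alternative and has the incidental advantage of fixing the sign; the paper's approach, in turn, avoids importing that polynomial as an external input (it is itself typically derived from the same $\sqrt{\td}$ formalism). For the first assertion the paper simply cites $c_2^2=828$ from the literature, so your Todd-genus derivation is a self-contained substitute. Either way the final discriminant computation is identical.
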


\begin{proof}
The Chern number $\int_{S^{[2]}}c_2(S^{[2]})^2=828=2^2\cdot 3^2\cdot 23$ is well-known (see Ellingsrud--G\"ottsche-Lehn~\cite{EGL} 
or Nieper-Wisskirchen~\cite[Remark~4.13]{NW}). To compute the discriminant of the multiplication map, similarly as in the proof of the
previous lemma we consider the quadratic form $H^2(S^{[2]},\IZ)\to\IZ, \alpha\mapsto c_2(S^{[2]})\alpha^2$. By Nieper-Wisskirchen~\cite[Corollary~3.8]{NW} one has
\begin{align*}
\int_{S^{[2]}} c_2(S^{[2]})\alpha^2&=96\,\lambda(\alpha)\td^{\frac{1}{2}}(S^{[2]}),\\
\int_{S^{[2]}}\alpha^4&=24\lambda(\alpha)^2\td^{\frac{1}{2}}(S^{[2]}).
\end{align*}
Recall that $\int_{S^{[2]}}\alpha^4=3\,q(\alpha)^2$ and that the square root of the Todd genus is here:
$$
\sqrt{\td}(S^{[2]})=1+\frac{1}{24}c_2(S^{[2]})+\frac{7}{5760}c_2(S^{[2]})^2-\frac{1}{1440}c_4(S^{[2]}).
$$ 
Using that $c_4(S^{[2]})=324$ (see~\cite{NW}) one gets 
$\td^{\frac{1}{2}}(S^{[2]})\coloneqq\int_{S^{[2]}}\sqrt{\td}(S^{[2]})=\frac{25}{32}$.
Putting all together we get
$$
\left(\int_{S^{[2]}} c_2(S^{[2]})\alpha^2\right)^2=900\,q(\alpha)^2
$$
so $\int_{S^{[2]}} c_2(S^{[2]})\alpha^2=\pm 30q(\alpha)$. For this quadratic form, the lattice $H^2(S^{[2]},\IZ)$ is then isometric
to $\Lambda(30)\oplus\langle -60\rangle$, so the multiplication map has discriminant
 $2^{24}\cdot 3^{23}\cdot 5^{23}$.
\end{proof}

Let $(e_i)_i$ be an orthonormal basis of $H^2(X,\IC)$ for $q$. The image of the class $\sum_i e_i\cdot e_i\in\Sym^2H^2(X,\IC)$ in $H^4(X,\IC)$ is denoted $q^{-1}$ and called
the \emph{Beauville--Bogomolov class}. By Markman~\cite{Markman3}, for $X$ deformation equivalent to $S^{[n]}$ there is a decomposition
$$
q^{-1}=c_2(X)+2\kappa_2(E_x)
$$
where $x\in X$ is a point, $E_x$ a rank $2n-2$ reflexive coherent twisted sheaf and $\kappa(E_x)\coloneqq ch(E_x)\exp\left(\frac{-c_1(E_x)}{2n-2}\right)$. Since $\kappa_2=\frac{c_1^2}{8}-c_2$, 
the class $u\coloneqq 4q^{-1}$ lives in $H^4(S^{[2]},\IZ)$. As noted by O'Grady~\cite{OGrady} and Markman~\cite{Markman3},
for $n=2$ the classes $q^{-1}$, $c_2(X)$ and $\kappa_2(E_x)$ span a $1$-dimensional space
and in fact $q^{-1}=\frac{5}{6}c_2(X)$ in $H^2(X,\IQ)$.

\begin{lemma}\label{lem:DeligneForq} Assume that $X$ is deformation equivalent to $S^{[2]}$. One has $\int_X u^2=2^4\cdot 5^2\cdot 23$ and the map $H^2(X,\IZ)\to H^6(X,\IZ)$, $\beta\mapsto u\beta$ has 
discriminant $2^{47}\cdot 5^{46}$.
\end{lemma}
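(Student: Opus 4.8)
The plan is to reduce everything to the Chern class computations of Lemma~\ref{lem:DeligneForc2} by means of the proportionality $q^{-1}=\frac{5}{6}c_2(X)$ in $H^4(X,\IQ)$ recalled just above. Since $u=4q^{-1}$, this gives $u=\frac{10}{3}c_2(X)$ in $H^4(X,\IQ)$, so every integral involving $u$ is a fixed rational multiple of the corresponding integral for $c_2(X)$. Both the Chern number $\int_X c_2(X)^2$ and the quadratic form $\alpha\mapsto\int_X c_2(X)\alpha^2$ are deformation invariants (they are computed in Lemma~\ref{lem:DeligneForc2} via the Fujiki constant and $\td^{\frac12}$), hence the values obtained there for $S^{[2]}$ apply verbatim to $X$.

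First I would compute the self-intersection: from $u=\frac{10}{3}c_2(X)$ one gets $\int_X u^2=\frac{100}{9}\int_X c_2(X)^2=\frac{100}{9}\cdot 2^2\cdot 3^2\cdot 23=2^4\cdot 5^2\cdot 23$, the factor $3^2$ in the denominator cancelling the one in the Chern number and $100=2^2\cdot 5^2$ supplying the remaining primes.

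Next I would treat the multiplication map. By Poincar\'e duality it is equivalent to the bilinear form $(\alpha,\beta)\mapsto\int_X\alpha\,u\,\beta$ on $H^2(X,\IZ)$, and $\int_X\alpha\,u\,\beta=\frac{10}{3}\int_X\alpha\,c_2(X)\,\beta$. In the proof of Lemma~\ref{lem:DeligneForc2} it was shown that $\int_X c_2(X)\alpha^2=\pm 30\,q(\alpha)$, so by polarisation $\int_X\alpha\,c_2(X)\,\beta=\pm 30\,\langle\alpha,\beta\rangle$; multiplying by $\frac{10}{3}$ yields $\int_X\alpha\,u\,\beta=\pm 100\,\langle\alpha,\beta\rangle$. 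Thus the form attached to $u$ is exactly $\pm 100$ times the Beauville--Bogomolov form.

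Finally I would read off the discriminant. Writing $H^2(X,\IZ)\cong U^{\oplus 3}\oplus E_8(-1)^{\oplus 2}\oplus\langle -2\rangle$, the rank $22$ summand $\Lambda=U^{\oplus 3}\oplus E_8(-1)^{\oplus 2}$ is unimodular, so scaling the form by $100$ turns the lattice into $\Lambda(\pm 100)\oplus\langle\mp 200\rangle$, of discriminant $100^{22}\cdot 200=(2^2\cdot 5^2)^{22}\cdot(2^3\cdot 5^2)=2^{47}\cdot 5^{46}$; the overall sign does not affect the discriminant, which proves the claim. I do not expect a serious obstacle here: the only points needing care are tracking the factor $\frac{10}{3}$ against the factors of $3$ hidden in the Fujiki/Chern data and correctly counting the exponents in the scaled unimodular lattice. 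The genuine geometric input, the identity $q^{-1}=\frac{5}{6}c_2(X)$, is already supplied by Markman and O'Grady.
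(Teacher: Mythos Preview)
Your proposal is correct and is precisely the first argument the paper gives: the one-line reduction via $3u=10\,c_2(X)$ to the Chern-class computations of Lemma~\ref{lem:DeligneForc2}. The paper then also supplies an alternative self-contained computation, deriving $\int_X q^{-1}\alpha^2=25\,q(\alpha)$ (and hence $\int_X u\alpha^2=100\,q(\alpha)$, pinning down your sign) directly from the Fujiki relation $\int_X v^4=3\,q(v)^2$ by expanding $(v+\alpha)^4$ and extracting coefficients, which avoids any appeal to Lemma~\ref{lem:DeligneForc2}.
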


\begin{proof} This is a direct consequence of the equality $3u=10 c_2(X)$ in $H^2(X,\IZ)$. Here is an alternative, more direct argument. Let $(e_i)_i$ be an orthonormal basis of $H^2(X,\IC)$ for $q$, $v\coloneqq\sum_i x_i e_i\in H^2(X,\IC)$ and $\alpha\in H^2(X,\IC)$. From the relation
$$
\int_X(v+\alpha)^4=3q(v+\alpha)^2
$$
and by extracting the quadratic part in $\alpha$ we get
$$
\int_X v^2\alpha^2=q(v)q(\alpha)+2\langle v,\alpha\rangle^2.
$$
By extracting the square coefficients in the variables $x_i$ and putting them to one we get
\begin{align*}
\int_X\left(\sum_i e_i^2\right)\alpha^2&=q\left(\sum_i e_i\right)q(\alpha)+2\sum_i\langle e_i,\alpha\rangle^2\\
\int_X q^{-1}\alpha^2&=b_2(X)q(\alpha)+2q(\alpha)\\
&=25q(\alpha)
\end{align*}
Since $u=4q^{-1}$ this implies that $\int_X u\alpha^2=2^2 5^2 q(\alpha)$. For this quadratic form, the lattice $H^2(X,\IZ)$ is then isometric to $\Lambda(2^25^2)\oplus\langle -2^35^2\rangle$ so the multiplication map has 
discriminant $2^{47}\cdot 5^{46}$.
Taking $\alpha=\sum_i x_i e_i$, we also have
$$
\int_X u \left(\sum_i x_ie_i\right)^2=2^25^2 q\left(\sum_i x_i e_i\right).
$$
By extracting the square coefficients in $x_i$ we get similarly
$$
\int_X u\sum_i e_i^2=2^25^2\sum_i q(e_i)
$$
hence $\int_X u^2=2^4\cdot 5^2\cdot 23$.

\end{proof}

\begin{proposition}\label{prop:degenere} Let $G$ be an order $p$ group acting on $X$. The spectral sequence of equivariant cohomology with coefficients in $\IF_p$ degenerates at the $E_2$-term in the following cases:
\begin{enumerate}
\item\label{propdegenere:item1} $X$ is deformation equivalent to $S^{[2]}$, $3\leq p\leq 19$ and $p\neq 5$.
\item\label{propdegenere:item2} $G$ acts by natural automorphisms on $S^{[2]}$ and $p\neq 2$.
\end{enumerate}
\end{proposition}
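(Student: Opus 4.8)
The plan is to deduce the proposition from Proposition~\ref{prop:degE2Hilb2}. Its hypotheses hold here: $\dim_\IR X=8$ as $X$ has complex dimension four, and $H^\odd(X,\IF_p)=0$ because $H^\odd(X,\IZ)=0$ with $H^\even(X,\IZ)$ torsion-free (Markman). It then suffices, for each admissible $p$, to produce a class $\cF\in K^0_G(X)$ and a characteristic class $\alpha$ so that $c\coloneqq\alpha_2(\cF)\in H^4(X,\IF_p)$ turns the multiplications $\cup c\colon H^2\to H^6$ and $\cup c^2\colon H^0\to H^8$ into isomorphisms. Each such map is the reduction modulo $p$ of an integral multiplication between free $\IZ$-modules of equal rank ($23$ and $1$ respectively), so it is an isomorphism exactly when the integral discriminant is prime to $p$. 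These discriminants are precisely what Lemmas~\ref{lem:DeligneForE}, \ref{lem:DeligneForc2} and \ref{lem:DeligneForq} compute for the candidates $e^2$, $c_2(X)$ and $u=4q^{-1}$.

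For $p\in\{7,11,13,17,19\}$, which occurs in both cases, I would take $\cF=[T_X]$ with $\alpha=c_2$, so $c=c_2(X)$ is the second Chern class of the canonically $G$-linearised tangent bundle. Lemma~\ref{lem:DeligneForc2} gives the two discriminants $2^{24}3^{23}5^{23}$ and $\int_X c_2^2=2^23^2\cdot 23$, both prime to each such $p$, and Proposition~\ref{prop:degE2Hilb2} applies at once. For $p=5$, which appears only in case~(\ref{propdegenere:item2}), the classes $c_2(X)$ and $u$ both fail since their discriminants carry a factor $5$; instead I would use $\cF=[\cO(e)]$ with $\alpha=c_1^2$, so $c=e^2$. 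For a natural automorphism the exceptional class $e=2\delta$ is $G$-invariant, so $\cO(e)$ is $G$-linearised, and Lemma~\ref{lem:DeligneForE} yields the discriminants $2^{70}\cdot 3$ and $2^6\cdot 3$, both prime to $5$.

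The case $p=3$, common to both parts, is the crux. Here every class built from $T_X$ is useless: the integral identity $3u=10c_2(X)$ forces $c_2(X)\equiv 0\pmod 3$, and $e^2$ fails too as both its discriminants are divisible by $3$. The only surviving candidate is $u=4q^{-1}$, whose discriminants $2^{47}5^{46}$ and $\int_X u^2=2^45^2\cdot 23$ (Lemma~\ref{lem:DeligneForq}) are prime to $3$. To realise $u$ as an equivariant characteristic class I would use Markman's decomposition $q^{-1}=c_2(X)+2\kappa_2(E_x)$: reducing modulo $3$ and invoking $c_2(X)\equiv 0$ gives $u\equiv 2\kappa_2(E_x)\pmod 3$, so one takes $\cF=[E_x]$ and $\alpha=2\kappa$. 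The delicate point --- and the step I expect to be the main obstacle --- is the $G$-equivariance of Markman's reflexive twisted sheaf $E_x$, since no Chern class of the tangent bundle can supply a nonzero class modulo $3$. In case~(\ref{propdegenere:item2}) this is transparent: an order-three automorphism of the K3 surface $S$ fixes a point, hence $S^{[2]}$ has a $G$-fixed point $x$, and $E_x$ is then canonically $G$-linearised. In case~(\ref{propdegenere:item1}) one must instead secure a $G$-fixed point on the deformation, or appeal to the monodromy-functoriality of Markman's construction to obtain a $G$-equivariant lift of $[E_x]$ with no choice of fixed point; making this functoriality precise is where the real work lies. With the equivariant class in hand, Proposition~\ref{prop:degE2Hilb2} finishes both cases.
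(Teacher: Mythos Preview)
Your strategy coincides with the paper's: feed Proposition~\ref{prop:degE2Hilb2} a $G$-equivariant degree-four class and verify the two discriminants via Lemmas~\ref{lem:DeligneForE}, \ref{lem:DeligneForc2}, \ref{lem:DeligneForq}. For $p\geq 7$ you and the paper both use $c_2(X)$; for case~(\ref{propdegenere:item2}) with $p>3$ the paper uses the exceptional divisor throughout (not only at $p=5$), but your choice of $c_2(X)$ for $p\geq 7$ works equally well there.

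The only substantive divergence is at $p=3$. The paper also selects $u=4q^{-1}$, but justifies its equivariance far more directly: $q^{-1}$ is by definition the image of the canonical element of $\Sym^2 H^2(X,\IC)$ dual to the Beauville--Bogomolov form, and since $G$ acts on $H^2(X,\IZ)$ by isometries of that form, $q^{-1}$ is $G$-invariant outright; alternatively, $u$ is a rational multiple of the equivariant class $c_2(X)$. The paper never invokes Markman's sheaf $E_x$ or a fixed point. Your detour through $E_x$ creates two real obstacles you yourself flag: the existence of a $G$-fixed point in case~(\ref{propdegenere:item1}) is \emph{not} available a priori for non-symplectic order-$3$ actions (the holomorphic Lefschetz number vanishes, and the paper's later proof that $X^G\neq\emptyset$ for $p=3$ uses Corollary~\ref{cor:mainformula}, hence this very proposition, so would be circular), and $E_x$ is a \emph{twisted} reflexive sheaf, so writing $[E_x]\in K^0_G(X)$ and treating $\kappa$ (with its denominator $8$) as a characteristic class both need additional justification. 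The paper's shortcut avoids these problems entirely, though it is admittedly terse about how $G$-invariance of $u$ furnishes the required lift to $H^4(X_G,\IF_3)$.
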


\begin{proof} \text{}\\
\noindent{(\ref{propdegenere:item1})} If $X$ is deformation equivalent to $S^{[2]}$, the class $c_2(T_X)$ is $G$-equivariant and can be used in Proposition~\ref{prop:degE2Hilb2}. By lemma~\ref{lem:DeligneForc2} the degeneracy conditions are fullfilled for $p>5$. For $p=3$, by Lemma~\ref{lem:DeligneForq} one can use the class $u=4q^{-1}$ since from the definition of $q^{-1}$ it is clear that this class is $G$-invariant (alternatively, $u$ is proportional to $c_2(X)$ in $H^2(X,\IQ)$ so it is $G$-equivariant).

\noindent{(\ref{propdegenere:item2})} If $G$ acts by natural automorphisms on $S^{[2]}$, the exceptional divisor is $G$-invariant and it can be used in Proposition~\ref{prop:degE2Hilb2}. By Lemma \ref{lem:DeligneForE} the degeneracy conditions are fullfilled if $p>3$. In the case $p=3$ the class $u=4q^{-1}$ can be used again.
\end{proof}

\begin{lemma} Assume that $G=\IZ/2\IZ$ and let $M$ be a finite-dimensional $\IF_2[G]$-module. Then:
\begin{align*}
\ell_1(\Sym^2M)&=\frac{\ell_1(M)(\ell_1(M)+1)}{2}+\ell_2(M),\\
\ell_2(\Sym^2M)&=\ell_2(M)(\ell_2(M)+\ell_1(M)).
\end{align*}
\end{lemma}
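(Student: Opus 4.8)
The plan is to reduce everything to the additive decomposition $M \cong N_1^{\oplus \ell_1(M)} \oplus N_2^{\oplus \ell_2(M)}$ that is available for $p=2$ (only blocks of size $1$ and $2$ occur), combined with the functorial decomposition of the symmetric square of a direct sum of $\IF_2[G]$-modules with diagonal action,
$$
\Sym^2(V \oplus W) \cong \Sym^2 V \oplus (V \otimes_{\IF_2} W) \oplus \Sym^2 W,
$$
which holds over any commutative ring, and in particular in characteristic $2$. Writing $a \coloneqq \ell_1(M)$ and $b \coloneqq \ell_2(M)$, this expresses $\Sym^2 M$ in terms of the four elementary pieces $\Sym^2 N_1$, $N_1 \otimes N_2$, $\Sym^2 N_2$ and $N_2 \otimes N_2$, so the whole computation comes down to determining the Jordan type of each of these.

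First I would dispose of the easy pieces. Since $N_1 = \IF_2$ is trivial, $\Sym^2(N_1^{\oplus a})$ is a trivial module of dimension $\binom{a+1}{2} = \frac{a(a+1)}{2}$, contributing $\frac{a(a+1)}{2}$ to $\ell_1$; and $N_1 \otimes N_2 \cong N_2$, so $N_1^{\oplus a} \otimes N_2^{\oplus b} \cong N_2^{\oplus ab}$ contributes $ab$ to $\ell_2$. The two genuinely representation-theoretic computations are $\Sym^2 N_2$ and $N_2 \otimes N_2$. For these I would fix a basis $v_1, v_2$ of $N_2$ with $g v_1 = v_1$ and $g v_2 = v_1 + v_2$, write down the matrix of $g$ on the induced basis of the symmetric square (resp. tensor square), and read off the Jordan type from $\bar\tau = g - 1$: since $\bar\tau^2 = 0$ on every $\IF_2[G]$-module, the number of blocks equals $\dim \ker \bar\tau$ and the number of size-$2$ blocks equals $\rank \bar\tau$, as in the proof of Lemma~\ref{lem:cohGadditif}. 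This yields $\Sym^2 N_2 \cong N_1 \oplus N_2$ and $N_2 \otimes N_2 \cong N_2^{\oplus 2}$.

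With these in hand I would assemble the answer. Decomposing $\Sym^2(N_2^{\oplus b})$ by the direct-sum formula produces $b$ copies of $\Sym^2 N_2 \cong N_1 \oplus N_2$ and $\binom{b}{2}$ copies of $N_2 \otimes N_2 \cong N_2^{\oplus 2}$, giving $\ell_1 = b$ and $\ell_2 = b + b(b-1) = b^2$ for this summand. Collecting all contributions gives $\ell_1(\Sym^2 M) = \frac{a(a+1)}{2} + b$ and $\ell_2(\Sym^2 M) = ab + b^2 = b(a+b)$, which are exactly the two claimed formulas after substituting $a = \ell_1(M)$ and $b = \ell_2(M)$. As a sanity check one verifies that $\ell_1 + 2\ell_2$ recovers $\dim \Sym^2 M = \binom{a+2b+1}{2}$.

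The main obstacle, such as it is, lies entirely in the characteristic-$2$ computation of $\Sym^2 N_2$: here $g(v_2^2) = (v_1 + v_2)^2 = v_1^2 + v_2^2$ because the cross term $2 v_1 v_2$ vanishes, so the $g$-action on the symmetric square differs from its characteristic-zero shape, and one must resist importing intuition from the $\Sym^2 \oplus \Lambda^2$ splitting of the tensor square, which fails when $2$ is not invertible. Once the small $g$-matrices are written down correctly and their $\bar\tau$-ranks computed, the remaining steps are purely bookkeeping, so this verification is the only place requiring genuine care.
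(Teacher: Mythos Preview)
Your proof is correct and follows essentially the same approach as the paper: decompose $M$ into Jordan blocks, use the functorial splitting of $\Sym^2$ of a direct sum, compute the elementary pieces $\Sym^2 N_1$, $\Sym^2 N_2$, $N_1\otimes N_2$, $N_2\otimes N_2$ by direct matrix calculation, and assemble. The paper merely records these four decompositions and says ``the result follows,'' whereas you spell out the bookkeeping and the characteristic-$2$ subtlety in $\Sym^2 N_2$ explicitly.
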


\begin{proof}
The $\IF_2[G]$-modules $M$ and $\Sym^2M$ decompose as:
\begin{align*}
M&\cong N_2^{\oplus \ell_2(M)}\oplus N_1^{\oplus \ell_1(M)},\\
\Sym^2M&\cong N_2^{\oplus \ell_2(\Sym^2M)}\oplus N_1^{\oplus \ell_1(\Sym^2M)}.
\end{align*}
By elementary matrix computations, one finds the following $G$-module decompositions:
\begin{align*}
\Sym^2N_2&\cong N_2\oplus N_1, & N_2\otimes N_2&\cong N_2^{\oplus 2},& N_2\otimes N_1&\cong N_2\\
\Sym^2N_1&\cong N_1, & N_1\otimes N_1&\cong N_1.
\end{align*}
The result follows.
\end{proof}

\begin{lemma}\label{lem:lpSym2} Assume that $3\leq p\leq 19$, $G=\IZ/p\IZ$ and let $M$ be a finite-dimensional $\IF_p[G]$-module. Then:
\begin{align*}
\ell_1(\Sym^2M)&=\frac{\ell_1(M)\cdot(\ell_1(M)+1)}{2}+\frac{\ell_{p-1}(M)\cdot(\ell_{p-1}(M)-1)}{2},\\
\ell_{p-1}(\Sym^2M)&=\ell_{p-1}(M)\cdot\ell_1(M),\\
\ell_p(\Sym^2M)&=\frac{p+1}{2}\cdot\ell_p(M)+p\cdot\frac{\ell_p(M)\cdot(\ell_p(M)-1)}{2}+\frac{p-1}{2}\cdot\ell_{p-1}(M)\\
&\quad+(p-1)\cdot\ell_p(M)\cdot\ell_{p-1}(M)+\ell_p(M)\cdot\ell_1(M)\\
&\quad+(p-2)\cdot\frac{\ell_{p-1}(M)\cdot(\ell_{p-1}(M)-1)}{2},
\end{align*}
and $\ell_i(\Sym^2M)=0$ for $2\leq i\leq p-2$.
\end{lemma}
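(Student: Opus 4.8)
The plan is to reduce everything to a short list of universal decompositions of $\Sym^2 N_q$ and of $N_q\otimes N_{q'}$, and then to collect terms. By Proposition~\ref{prop:decompCurtisReiner} only the blocks $N_1$, $N_{p-1}$ and $N_p$ occur, so I may write $M\cong N_1^{\oplus\ell_1}\oplus N_{p-1}^{\oplus\ell_{p-1}}\oplus N_p^{\oplus\ell_p}$ with $\ell_q=\ell_q(M)$ (this is the hypothesis under which the stated formulas, and in particular the vanishing $\ell_i(\Sym^2 M)=0$ for $2\le i\le p-2$, can hold). Using $\Sym^2(A\oplus B)\cong\Sym^2 A\oplus(A\otimes B)\oplus\Sym^2 B$ and its iterate $\Sym^2(V^{\oplus n})\cong(\Sym^2 V)^{\oplus n}\oplus(V\otimes V)^{\oplus\binom{n}{2}}$, the module $\Sym^2 M$ splits as a direct sum of the pieces $\Sym^2 N_q$ (multiplicity $\ell_q$), the self-tensor pieces $N_q\otimes N_q$ (multiplicity $\binom{\ell_q}{2}$), and the mixed pieces $N_q\otimes N_{q'}$ for $q<q'$ (multiplicity $\ell_q\ell_{q'}$). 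The three displayed identities will follow by reading off, for each target size $1$, $p-1$, $p$, the contributions of these pieces.

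The easy building blocks come first. One has $N_1\otimes V\cong V$ for every $G$-module $V$; and since $N_p\cong\IF_p[G]$ is free, $N_p\otimes V\cong N_p^{\oplus\dim_{\IF_p} V}$ (the map determined by $g\otimes v\mapsto g\otimes g^{-1}v$ for $g\in G$ reduces the diagonal action to the left regular action on the first factor), so $N_p\otimes N_p\cong N_p^{\oplus p}$, $N_1\otimes N_p\cong N_p$ and $N_{p-1}\otimes N_p\cong N_p^{\oplus(p-1)}$. Because $\IF_p[G]$ is a local Artinian algebra, projective modules are free; as $p$ is odd, $N_p\otimes N_p\cong\Sym^2 N_p\oplus\Lambda^2 N_p$ with both summands projective, hence free, and a dimension count ($\tfrac{p(p+1)}2$ and $\tfrac{p(p-1)}2$) gives $\Sym^2 N_p\cong N_p^{\oplus\frac{p+1}2}$ (and $\Lambda^2 N_p\cong N_p^{\oplus\frac{p-1}2}$).

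The crux is the pair $N_{p-1}\otimes N_{p-1}$ and its symmetric part; this is the single genuinely nontrivial computation, all other pieces being either trivial or forced by freeness of $N_p$. I would first establish $N_{p-1}\otimes N_{p-1}\cong N_p^{\oplus(p-2)}\oplus N_1$. One clean route is a syzygy computation in the stable module category of $\IF_p[G]$: the projective cover $0\to N_1\to N_p\to N_{p-1}\to 0$ gives $\Omega N_q\cong N_{p-q}$ (so the category has period two) and in particular $N_{p-1}=\Omega N_1$; using the stable identity $\Omega(X)\otimes Y\simeq\Omega(X\otimes Y)$ one gets $N_{p-1}\otimes N_{p-1}=\Omega N_1\otimes N_{p-1}\simeq\Omega(N_{p-1})=N_1$, i.e. $N_{p-1}\otimes N_{p-1}\cong N_1\oplus(\text{free})$, and the free part has dimension $(p-1)^2-1=p(p-2)$. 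Alternatively one argues elementarily: the number of Jordan blocks is $\dim(N_{p-1}\otimes N_{p-1})^G=\dim\End_G(N_{p-1})=p-1$ (using self-duality of $N_{p-1}$), the number of free blocks is $\rank\bar\tau^{p-1}$, and, once one checks that no intermediate block survives, the two counts force $N_p^{\oplus(p-2)}\oplus N_1$. Finally, from $N_{p-1}\otimes N_{p-1}=\Sym^2 N_{p-1}\oplus\Lambda^2 N_{p-1}$, Krull--Schmidt makes each summand a sub-sum of $N_p^{\oplus(p-2)}\oplus N_1$; since $\dim\Sym^2 N_{p-1}=\tfrac{p(p-1)}2$ is divisible by $p$, the single trivial block must lie in $\Lambda^2 N_{p-1}$, whence $\Sym^2 N_{p-1}\cong N_p^{\oplus\frac{p-1}2}$ and $\Lambda^2 N_{p-1}\cong N_p^{\oplus\frac{p-3}2}\oplus N_1$.

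With all building blocks in hand the assembly is bookkeeping. The block $N_1$ receives $\binom{\ell_1+1}{2}$ from $\Sym^2(N_1^{\oplus\ell_1})$ and $\binom{\ell_{p-1}}{2}$ from the $N_{p-1}\otimes N_{p-1}$ pieces; the block $N_{p-1}$ receives only $\ell_1\ell_{p-1}$ from $N_1\otimes N_{p-1}$; and $N_p$ collects $\tfrac{p+1}2\ell_p+p\binom{\ell_p}{2}$ from $\Sym^2(N_p^{\oplus\ell_p})$, $\tfrac{p-1}2\ell_{p-1}+(p-2)\binom{\ell_{p-1}}{2}$ from $\Sym^2(N_{p-1}^{\oplus\ell_{p-1}})$, $\ell_1\ell_p$ from $N_1\otimes N_p$, and $(p-1)\ell_{p-1}\ell_p$ from $N_{p-1}\otimes N_p$. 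Summing reproduces the three formulas, and $\ell_i(\Sym^2 M)=0$ for $2\le i\le p-2$ because none of the building blocks produces a block of intermediate size. Thus the only real obstacle is the computation $N_{p-1}\otimes N_{p-1}\cong N_p^{\oplus(p-2)}\oplus N_1$ together with its $\Sym^2/\Lambda^2$ splitting, which the syzygy argument (or the elementary block count) settles.
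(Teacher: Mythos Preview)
Your proof is correct and follows the same overall strategy as the paper: decompose $M$ into $N_1$, $N_{p-1}$, $N_p$ summands (the lemma as stated tacitly assumes $\ell_i(M)=0$ for $2\le i\le p-2$, as both you and the paper note), expand $\Sym^2$ over the direct sum, and identify each building block $\Sym^2 N_q$ and $N_q\otimes N_{q'}$. The list of nine decompositions you obtain is identical to the paper's.

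The difference is purely in how these building blocks are justified. The paper simply writes ``by elementary matrix computations'' and records the answers. You instead give conceptual arguments: freeness of $N_p$ for the $N_p\otimes V$ cases, the projective--equals--free fact over the local Artinian ring $\IF_p[G]$ for $\Sym^2 N_p$, the syzygy identity $\Omega(X)\otimes Y\simeq\Omega(X\otimes Y)$ in the stable module category for $N_{p-1}\otimes N_{p-1}$, and a Krull--Schmidt plus $p$-divisibility-of-dimension argument to pin the single $N_1$ to $\Lambda^2 N_{p-1}$ rather than $\Sym^2 N_{p-1}$. These arguments are more informative and scale better (they do not depend on the bound $p\le 19$, which is irrelevant for this lemma anyway), whereas the paper's appeal to matrix computation is shorter but opaque. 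One small quibble: your citation of Proposition~\ref{prop:decompCurtisReiner} is not literally applicable, since that proposition concerns $H^k(X,\IZ)$ rather than an arbitrary $\IF_p[G]$-module; your parenthetical remark already captures the correct point, namely that the formulas only hold under the hypothesis $\ell_i(M)=0$ for $2\le i\le p-2$.
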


\begin{proof}
As before, we have the decompositions:
\begin{align*}
M&\cong N_p^{\oplus \ell_p(M)}\oplus N_{p-1}^{\oplus\ell_{p-1}(M)}\oplus N_1^{\oplus\ell_1(M)},\\
\Sym^2M&\cong \bigoplus_{1\leq q\leq p}N_q^{\oplus \ell_q(\Sym^2M)}.
\end{align*}
By elementary matrix computations, one finds the following $G$-module decompositions:
\begin{align*}
\Sym^2 N_p&\cong N_p^{\oplus \frac{p+1}{2}}, & N_p\otimes N_p&\cong N_p^{\oplus p},& N_{p-1}\otimes N_{p-1}&\cong N_p^{\oplus p-2}\oplus N_1\\
\Sym^2 N_{p-1}&\cong N_p^{\oplus \frac{p-1}{2}}, & N_p\otimes N_{p-1}&\cong N_p^{\oplus p-1}, & N_{p-1}\otimes N_1&\cong N_{p-1}\\
\Sym^2 N_1&\cong N_1, & N_p\otimes N_1&\cong N_p, & N_1\otimes N_1&\cong N_1.
\end{align*}
The result follows.
\end{proof}

\begin{corollary}\label{cor:mainformula}
Let $X$ be deformation equivalent to $S^{[2]}$ and $G$ be a group of automorphisms of prime order $p$ on $X$ with
$3\leq p\leq 19$ and $p\neq 5$. Then:
\begin{align*}
h^\ast(X^G,\IF_p)=&324-2\aG(X)\left(25-\aG(X)\right)-(p-2)\mG(X)\left(25-2\,\aG(X)\right)\\
&+\frac{1}{2}\mG(X)\left((p-2)^2\mG(X)-p\right)
\end{align*}
with 
\begin{align*}
2&\leq (p-1)\mG(X)<23,\\
0&\leq \aG(X)\leq \min\{(p-1)\mG(X),23-(p-1)\mG(X)\}.
\end{align*}
\end{corollary}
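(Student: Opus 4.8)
The plan is to specialise the general formula of Corollary~\ref{cor:formulepqqe} to $X$ deformation equivalent to $S^{[2]}$, the only real work being the control of the degree-$4$ contribution through the symmetric square. First I would check that the hypotheses of Corollary~\ref{cor:formulepqqe} are satisfied: by Markman the integral cohomology $H^*(X,\IZ)$ is torsion-free with $H^{\odd}(X,\IZ)=0$, and by Proposition~\ref{prop:degenere}(\ref{propdegenere:item1}) the spectral sequence of equivariant cohomology with coefficients in $\IF_p$ degenerates at $E_2$ for $3\leq p\leq 19$, $p\neq 5$. Since $b_0=b_8=1$, $b_2=b_6=23$, and $b_4=\dim_\IQ\Sym^2 H^2(X,\IQ)=276$ by Verbitsky's isomorphism, the total Betti number is $h^*(X,\IF_p)=324$, which produces the leading term of the formula.

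The sums in Corollary~\ref{cor:formulepqqe} run over $1\leq k\leq 7$, but only $k=2,4,6$ contribute: the odd cohomology vanishes, while $H^0$ and $H^8$ are trivial $G$-modules and hence give $\aG^k(X)=\mG^k(X)=0$. Poincar\'e duality identifies $H^6(X,\IF_p)$ with the $G$-dual of $H^2(X,\IF_p)$, and since each Jordan block $N_q$ is self-dual one gets $\ell_q^6(X)=\ell_q^2(X)$, whence $\aG^6=\aG(X)$ and $\mG^6=\mG(X)$. Consequently $\sum_k\aG^k(X)=2\aG(X)+\aG^4(X)$ and $\sum_k\mG^k(X)=2\mG(X)+\mG^4(X)$, so the whole problem is reduced to expressing the degree-$4$ parameters in terms of $\aG(X)$ and $\mG(X)$.

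The key step is to determine the $\IF_p[G]$-module structure of $H^4(X,\IF_p)$. By Proposition~\ref{prop:indiceS2H4} the quotient $H^4(X,\IZ)/\Sym^2 H^2(X,\IZ)$ is isomorphic to $(\IZ/2\IZ)^{23}\oplus(\IZ/5\IZ)$, which has no $p$-torsion since $p\neq 2,5$; therefore reduction modulo $p$ turns the $G$-equivariant cup-product inclusion into an isomorphism $\Sym^2 H^2(X,\IF_p)\xrightarrow{\sim}H^4(X,\IF_p)$ of $\IF_p[G]$-modules (symmetric squares commuting with base change to $\IF_p$). Writing $M\coloneqq H^2(X,\IF_p)$, Corollaries~\ref{cor:linkaGlp} and~\ref{cor:linkmGlp} give $\ell_p(M)=\aG(X)$ and $\ell_{p-1}(M)=\mG(X)-\aG(X)$, and the relation $23=b_2=p\ell_p(M)+(p-1)\ell_{p-1}(M)+\ell_1(M)$ of Proposition~\ref{prop:decompCurtisReiner}(\ref{propCurtis:item2}) gives $\ell_1(M)=23-\aG(X)-(p-1)\mG(X)$. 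Feeding these three quantities into Lemma~\ref{lem:lpSym2} yields $\ell_p^4(X)=\aG^4(X)$ and $\ell_{p-1}^4(X)$, and hence $\mG^4(X)=\ell_p^4(X)+\ell_{p-1}^4(X)$ by Corollary~\ref{cor:linkmGlp}.

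Substituting these expressions into $h^*(X^G,\IF_p)=324-2\sum_k\aG^k(X)-(p-2)\sum_k\mG^k(X)$ and simplifying gives the stated closed formula; this final polynomial manipulation—combining the lengthy expression for $\ell_p(\Sym^2 M)$ from Lemma~\ref{lem:lpSym2} with the quadratic data in $\aG(X),\mG(X)$—is the main computational obstacle, although it is entirely routine. The accompanying inequalities are then immediate: the bound $0\leq\aG(X)\leq\min\{(p-1)\mG(X),\,23-(p-1)\mG(X)\}$ is Proposition~\ref{prop:inequalities} in degree $k=2$ with $h^2(X,\IF_p)=23$, while $2\leq(p-1)\mG(X)<23$ follows from the signature computations of Lemma~\ref{lem:descriptionOrthInv}, the lower bound coming from the faithful action forcing $\Orth_G(X)\neq 0$ (so $(p-1)\mG(X)\geq p-1\geq 2$) and the upper bound from the presence of an invariant K\"ahler class in $\Inv_G(X)$.
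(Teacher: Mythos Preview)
Your proposal is correct and follows essentially the same route as the paper: degeneracy of the spectral sequence via Proposition~\ref{prop:degenere}, Corollary~\ref{cor:formulepqqe} reduced by Poincar\'e duality to the computation of $\aG^4(X)$ and $\mG^4(X)$, and these in turn obtained from Lemma~\ref{lem:lpSym2} once Proposition~\ref{prop:indiceS2H4} gives $\Sym^2 H^2(X,\IF_p)\cong H^4(X,\IF_p)$ for $p\neq 2,5$. The only cosmetic difference is in the strict upper bound $(p-1)\mG(X)<23$: the paper argues by parity ($p-1$ is even while $23$ is odd, so equality in Proposition~\ref{prop:inequalities} is impossible), whereas you invoke the invariant K\"ahler class to force $\rank\Inv_G(X)\geq 1$; both are valid and equally short.
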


\begin{proof} By Proposition~\ref{prop:degenere} and Corollary~\ref{cor:formulepqqe}, using Poincar\'e duality
one has:
$$
h^\ast(X^G,\IF_p)=324-4\aG(X)-2\aG^4(X)-2(p-2)\mG(X)-(p-2)\mG^4(X).
$$
By Proposition~\ref{prop:indiceS2H4} one has an isomorphism $\Sym^2 H^2(X,\IF_p)\cong H^4(X,\IF_p)$ so by Lemma~\ref{lem:lpSym2}, using Corollaries~\ref{cor:linkaGlp}~\&~\ref{cor:linkmGlp} one can express the parameters $\aG^4(X)$ and $\mG^4(X)$ in terms of $\aG(X)$ and $\mG(X)$.
One gets easily the formula.
The estimates for the parameters $\aG(X),\mG(X)$ come from Proposition~\ref{prop:inequalities}, noting 
that $p$ is odd and that $\mG(X)$ cannot be zero, otherwise $\Inv_G(X)=H^2(X,\IZ)$ so $G$ acts trivially on
$H^2(X,\IZ)$. By Beauville~\cite[Proposition~10]{BeauvilleKaehler} this is impossible since $G\neq\{\id\}$.
\end{proof}

\begin{remark} \text{}
\begin{enumerate}
\item If $G$ acts symplectically, the inclusions $\Trans(X)\subset \Inv_G(X)$ and {$\Orth_G(X)\subset \NS(X)$} give one more relation: $(p-1)\mG(X)\leq \rho(X)$.
 If instead $G$ acts non symplectically, the inclusions $\Trans(X)\subset \Orth_G(X)$ and $\Inv_G(X)\subset \NS(X)$ give $23-(p-1)\mG(X)\leq\rho(X)$.
\item Assume that $G$ is an order $p$ group of automorphisms of $S$. We denote also by $G$ 
the group of natural automorphisms induced on $S^{[2]}$. Since the exceptional
divisor of $S^{[2]}$ is invariant by $G$, it is clear that
 $\aG(S^{[2]})=\aG(S)$ and $\mG(S^{[2]})=\mG(S)$.
 For example, if $G$ acts symplectically and $p=3$, by Nikulin~\cite{Nikulin} $G$ 
has $6$ isolated fixed points on $S$ and by Garbagnati--Sarti~\cite[Theorem~4.1]{GS} 
one has $\aG(S)=6$ and $\mG(S)=6$. By Corollary~\ref{cor:mainformula} 
we get $h^\ast(S^{[2]},\IF_5)=27$. In this case it is easy to see that the fixed 
locus $(S^{[2]})^G$ consists indeed in $27$ isolated points \cite[Exemple~2]{Boissiere}.
\item If $X$ is deformation equivalent to $S^{[2]}$ and $p\in\{17,19\}$, it follows from the inequalities
given in Corollary~\ref{cor:mainformula} that $\mG(X)=1$. Then necessarily $\aG(X)>0$. Otherwise,
by Lemma~\ref{lem:descriptionOrthInv} the lattice $\Orth_G(X)$ 
would be even unimodular of signature $(2,14)$ for $p=19$ or $(2,14)$ for $p=17$. By an theorem of Milnor
(see Nikulin~\cite[Theorem~1.1.1]{Nikulinfactor}) such lattices do not exist.
\end{enumerate}
\end{remark}

\subsection{Applications}
\label{ss:applications}

\subsubsection{Existence of fixed points}

\begin{proposition} Let $X$ be deformation equivalent to $S^{[2]}$ and $G$ be a group of automorphisms of prime order on $X$. Then the fixed locus $X^G$ is not empty.
\end{proposition}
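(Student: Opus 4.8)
The plan is to argue by contradiction, assuming $X^G=\emptyset$ and deriving a numerical impossibility, with a case split on the prime $p$. Since Betti numbers are deformation invariant, $\chi(X)=\chi(S^{[2]})=1+23+276+23+1=324=2^2\cdot 3^4$, and the excerpt already records that $p\leq 23$. The first step is to dispose of all primes $p\geq 5$ by a crude Euler characteristic congruence: fixing a finite $G$-CW structure on $X$ (available by hypothesis), every cell not meeting $X^G$ lies in a free $G$-orbit of cardinality $p$, so $\chi(X)\equiv\chi(X^G)\pmod p$. If $X^G=\emptyset$ this forces $p\mid 324$, hence $p\in\{2,3\}$; thus for each prime $5\leq p\leq 23$ the fixed locus is automatically nonempty. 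It is fortunate that this suffices here, because the two primes for which the degeneracy of the spectral sequence (and hence the formula of Corollary~\ref{cor:mainformula}) is unavailable, namely $p\in\{5,23\}$, are covered precisely by this congruence.

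For $p=3$ the congruence is useless ($3\mid 324$), so I would invoke the explicit count. By Proposition~\ref{prop:degenere} the spectral sequence degenerates, so Corollary~\ref{cor:mainformula} expresses $h^\ast(X^G,\IF_3)$ as a polynomial in $\aG(X),\mG(X)$ subject to $1\leq\mG(X)\leq 11$ and $0\leq\aG(X)\leq\min\{2\mG(X),23-2\mG(X)\}$. Since $X^G=\emptyset$ would give $h^\ast(X^G,\IF_3)=0$, it suffices to check that this quantity is strictly positive throughout the (finite) admissible range. For each fixed $\mG(X)$ the expression is an upward quadratic in $\aG(X)$, so its minimum over the allowed interval is attained at an endpoint; a short finite check over $\mG(X)\in\{1,\dots,11\}$ then shows the value never drops below $1$, the extremal case being $(\mG(X),\aG(X))=(7,9)$, which yields exactly $1$. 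This contradicts $h^\ast=0$.

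The remaining and genuinely hardest case is $p=2$: here the spectral sequence need not degenerate (the main theorem explicitly excludes $p=2$), so one cannot count mod-$2$ cohomology, and the Euler congruence is vacuous since $2\mid 324$. I would instead extract positivity of the Euler number directly, via the topological Lefschetz fixed point theorem: the order-two automorphism $g$ has fixed locus a submanifold and $L(g)=\chi(X^g)=\chi(X^G)$. Setting $t\coloneqq\trace\bigl(g^\ast\mid H^2(X,\IQ)\bigr)$ and using Verbitsky's isomorphism $H^4(X,\IQ)\cong\Sym^2 H^2(X,\IQ)$ (so that $\trace(g^\ast\mid H^4)=\tfrac12(t^2+23)$ because $g^2=\id$) together with Poincar\'e duality (giving $\trace(g^\ast\mid H^6)=t$) and the vanishing of odd cohomology, one computes
$$
\chi(X^G)=L(g)=1+t+\tfrac12(t^2+23)+t+1=\tfrac12\bigl((t+2)^2+23\bigr)\geq\tfrac{23}{2}>0.
$$
Hence $\chi(X^G)\neq 0$ and $X^G\neq\emptyset$, completing the case. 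The crux of this last step is simply observing that the resulting quadratic in $t$ has no real zero, which makes the argument robust against the lack of degeneracy information at $p=2$.
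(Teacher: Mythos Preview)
Your argument is correct, but it follows a different route from the paper's. The paper handles all primes (except the non-symplectic case $p=3$) via the \emph{holomorphic} Lefschetz fixed point formula: since $h^{0,0}=h^{2,0}=h^{4,0}=1$ for an IHS fourfold, a symplectic generator has holomorphic Lefschetz number $3$, and a non-symplectic one has $1+\xi_p+\xi_p^2$, which vanishes only for $p=3$. Thus the only residual case in the paper is the non-symplectic order~$3$ case, where it invokes Corollary~\ref{cor:mainformula} exactly as you do.

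Your approach, by contrast, is purely topological: the congruence $\chi(X)\equiv\chi(X^G)\pmod p$ dispatches $p\geq 5$, the topological Lefschetz number (computed through Verbitsky's $\Sym^2$ description of $H^4$) dispatches $p=2$, and Corollary~\ref{cor:mainformula} handles $p=3$ in both the symplectic and non-symplectic cases. The paper's argument is shorter and uses less of the cohomological machinery developed earlier, but it relies on the complex-analytic structure; yours trades the holomorphic fixed point theorem for the already-established results of the paper (the $\Sym^2$ isomorphism and the dimension formula), which makes it self-contained within the paper's framework but slightly longer. Both are valid, and your observation that the quadratic $\tfrac12((t+2)^2+23)$ has no real zero is a clean way to sidestep the lack of degeneracy information at $p=2$.
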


\begin{proof}
Denote by $g$ a generator of $G$ and by $\xi_p$ a primitive 
$p$-th root of the unity. If $g$ acts symplectically, its holomorphic Lefschetz number for 
the sheaf $\cO_X$  is three so $G$ has fixed points. If $g$ acts non-symplectically, 
its holomorphic Lefschetz number is $1+\xi_p+\xi_p^2$, so $G$ has fixed points if $p\neq 3$. If $p=3$ one can use Corollary~\ref{cor:mainformula} to check all possible values of $h^\ast(X^G,\IF_3)$. One finds that it is never zero, so $X^G$ is never empty.
\end{proof}

\begin{corollary} 
Let $X$ be deformation equivalent to $S^{[2]}$ and $G$ be a finite group of automorphisms of $X$. Then $G$ does not act freely on $X$.
\end{corollary}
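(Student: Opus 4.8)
The plan is to reduce the statement for an arbitrary finite group $G$ to the prime-order case already settled in the preceding proposition. I would argue by contradiction: suppose that a finite group $G$ acts freely on $X$, meaning that every non-identity element of $G$ moves all points of $X$. Fix any prime number $p$ dividing the order $\abs{G}$. By Cauchy's theorem there exists an element $g\in G$ of order exactly $p$, which generates a cyclic subgroup $H\coloneqq\langle g\rangle\subset G$ of prime order $p$. Restricting the $G$-action, $H$ acts by automorphisms on $X$.

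The key observation is that this restricted action of $H$ falls squarely under the hypotheses of the preceding proposition, which applies to \emph{any} group of automorphisms of prime order on a variety deformation equivalent to $S^{[2]}$. That proposition yields $X^H\neq\emptyset$; concretely, since $H$ is cyclic with generator $g$, the fixed locus $X^H$ is exactly the set of points fixed by $g$. Hence $g$ admits at least one fixed point on $X$. This directly contradicts the freeness of $G$, which would force the non-identity element $g\neq\id$ to act without fixed points. The contradiction completes the argument.

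The only point requiring a moment's care is that the nonemptiness result of the preceding proposition is genuinely a statement about the prime-order subgroup $H$ rather than about $G$ itself; but since a free action of $G$ restricts to a free action of every subgroup, applying the result to $H$ is exactly what is needed. I do not expect any real obstacle here: once the prime-order case is in hand---established through the holomorphic Lefschetz number, supplemented for $p=3$ by the explicit values of $h^\ast(X^G,\IF_3)$ extracted from Corollary~\ref{cor:mainformula}---the passage to arbitrary finite $G$ is a purely group-theoretic step via Cauchy's theorem.
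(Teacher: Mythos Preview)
Your proposal is correct and is exactly the intended argument: the paper states this corollary without proof, treating it as immediate from the preceding proposition, and the passage from the prime-order case to arbitrary finite $G$ via Cauchy's theorem that you spell out is precisely the implicit reasoning.
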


\begin{remark}
This result implies that it is not possible to construct Enriques varieties of dimension four and index three, as defined in Boissi\`ere--Nieper-Wi{\ss}kirchen--Sarti~\cite{BNWS} and Oguiso--Schro\"er~\cite{OS}, if one starts with deformations of Hilbert schemes of two points on a K3 surface.
\end{remark}

\subsubsection{An automorphism of order eleven} 

Consider the cubic $C$ in $\IP^5$ given by the equation
$$
x_0^3+x_1^2x_5+x_2^2x_4+x_3^2x_2+x_4^2x_1+x_5^2x_3=0.
$$
Let $\xi$ be a primitive eleventh root of the unit and consider the order $11$ automorphis~$\varphi$ of~$C$ given by 
$$
\varphi(x_0,x_1,x_2,x_3,x_4,x_5)=(x_0,\xi x_1,\xi^3x_2,\xi^4x_3,\xi^5x_4,\xi^9 x_5).
$$
As explained in Mongardi~\cite{Mongardi2}, $\varphi$ induces a symplectic automorphism of order $11$
on the Fano variety of lines $X$ of the cubic fourfold $C$, with $5$ isolated fixed
 points.
Using our main formula  given in Corollary~\ref{cor:mainformula}, one finds that there is only one possibility
for the parameters $\aG(X),\mG(X)$, that is:
$$
\aG(X)=2,\quad  \mG(X)=2,
$$ 
hence $\rho(X)\in\{20,21\}$. Since $X$ is algebraic, the inclusion $\Orth_G(X)\subset \NS(X)$ is strict since the N\'eron-Severi lattice is hyperbolic, so $\rho(X)=21$ (see also Mongardi~\cite{Mongardi2} for a more general statement). It follows from Lemma~\ref{lem:descriptionOrthInv} that $\Orth_G(X)$ has signature $(0,20)$ and discriminant $A_{\Orth_G(X)}\cong \left(\frac{\IZ}{p\IZ}\right)^2$ and contains no $(-2)$-classes. Furthermore, $\Inv_G(X)$ has signature $(3,0)$ and discriminant $A_{\Inv_G(X)}\cong \frac{\IZ}{2\IZ}\oplus\left(\frac{\IZ}{p\IZ}\right)^2$. We can deduce the isometry class of the invariant lattice as follows. From the classification of Brandt--Intrau~\cite{BI} of positive definite ternary quadratic forms we find that there are two possibilities for the invariant lattice $\Inv_G(X)$, given
 by the following Gram matrices:
$$
A\coloneqq\left(\begin{matrix}2 & 1 & 0 \\ 1 & 6 & 0 \\ 0 & 0 & 22\end{matrix}\right),
\quad B\coloneqq\left(\begin{matrix} 6 & 2 & 2\\2 & 8 & -3\\ 2 & -3 & 8 \end{matrix}\right)
$$
(these two lattices have the same discriminant but different discriminant forms). 

Denote by $Z\subset X\times C$ the universal family, with projections $p,q$ on $X$ and $C$. By Beauville--Donagi~\cite{BD}, the Abel--Jacobi map
$$
\alpha\coloneqq p_\ast q^\ast\colon H^4(C,\IZ)\to H^2(X,\IZ)
$$
is an isomorphism of Hodge structures. Denote by $h\in H^2(C,\IZ)$, resp. $g\in H^2(X,\IZ)$ the hyperplane class of $C$, resp. of $X$ for the Pl\"ucker embedding. Denote by $H^4(C,\IZ)_0$ the primitive cohomology (the orthogonal of $h$ for the intersection form on $C$), and similarly $H^2(X,\IZ)_0$ the orthogonal of $g$ for the Beauville--Bogomolov form, denoted $(-,-)_X$. By Beauville-Donagi~\cite{BD}, the Abel--Jacobi map induces an isometry between $H^4(C,\IZ)_0$ and $H^2(X,\IZ)_0$. It follows from Hassett~\cite{Hassett} that $\disc(H^2(X,\IZ)_0)=3$. Note that $(g,g)_X=6$. By Mongardi~\cite{Mongardi2}, for the special choice of the cubic $C$ with a symplectic automorphism of order $11$ constructed above, one has 
$$
\NS(X)\cong (6)\oplus E_8(-1)^{2}\oplus\left(\begin{matrix} -2 & 1 \\ 1 & -6\end{matrix}\right)
$$
hence $\disc(\NS(X))=2\cdot 3 \cdot 11^2$. Since $\disc(H^2(X,\IZ))=2$, it follows, with the same argument as above, that $\disc(\Trans(X))\in\{2^2\cdot 3\cdot 11^2,3\cdot 11^2\}$. Now consider $\NS(X)_0\coloneqq \NS(X)\cap q^{\perp}\subset H^2(X,\IZ)_0$ and observe that $\Trans(X)=\Trans(X)\cap q^{\perp}\subset H^2(X,\IZ)_0$. One has $\disc(\NS(X)_0)=11^2$ and $\disc(H^2(X,\IZ)_0)=3$ hence
$\disc(\Trans(X))\in\{11^2,3\cdot 11^2\}$. As a consequence, $\disc(\Trans(X))=3\cdot 11^2$.
 Now, observe that since $\Inv_G(X)$ has rank $3$ and $G$ acts symplectically, one has
 $\Trans(X)\subset\Inv_G(X)$ and in fact $\Trans(X)=\Inv_G(X)\cap q^\perp$. From the two 
Gram matrices $A$ and $B$ it is easy to deduce all elements of square~$6$
(there are very few) and to compute their orthogonal and its discriminant. 
One finds that the only possibility giving the right discriminant for $\Trans(X)$ 
is the choice of the matrix $B$
 (compare with Mongardi~\cite{Mongardi2} for a different argument).


\section{Classical methods in Smith theory}

In this section, we study the case when the spectral sequence does not degenerate.
 It happens that one gets bounds for $h^\ast(X^G,\IF_p)$ that are closely 
related to the previous results, where we see that the defect is contained in the
 $p$-torsion of the cohomology of the quotient $X/G$.


\subsection{Smith exact sequences}

Consider the chain complex $C_*(X)$ of $X$ with coefficients in $\IF_p$ and its subcomplexes $\bar\tau^iC_*(X)$ for $1\leq i\leq p-1$ (with $\bar\sigma=\bar\tau^{p-1})$. The basic tools in Smith theory are the following results, valid for any prime number $p$:

\begin{proposition}\label{prop:SmithSuite}\text{}
\begin{enumerate}
\item\label{SmithSuite1}\cite[Theorem~3.1]{Bredon} For $1\leq i\leq p-1$ there is an exact sequence of complexes:
$$
0\lra \bar\tau^i C_*(X)\oplus C_*(X^G)\overset{\iota}{\lra} C_*(X)\overset{\bar\tau^{p-i}}{\lra} \bar\tau^{p-i}C_*(X)\lra 0
$$
where $\iota$ denotes the sum of the inclusions.

\item\label{SmithSuite2}\cite[p.125]{Bredon} For $1\leq i\leq p-1$ there is an exact sequence of complexes:
$$
0\lra \bar\sigma C_*(X)\overset{\iota}{\lra}\bar\tau^i C_*(X)\overset{\bar\tau}{\lra} \bar\tau^{i+1} C_*(X)\lra 0
$$
where $\iota$ denotes the inclusion.
\item\label{SmithSuite3}\cite[(3.4) p.124]{Bredon} There is an isomorphism of complexes: 
$$
\bar\sigma C_*(X)\cong C_*(X/G,X^G),
$$
where $X^G$ is identified with its image in $X/G$.
\end{enumerate}
\end{proposition}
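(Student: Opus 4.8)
The plan is to establish all three statements at the chain level from a single structural observation about the $\IF_p[G]$-module $C_*(X)$, after which they become elementary statements about the ideals of the local ring $\IF_p[G]\cong\IF_p[\bar\tau]/(\bar\tau^p)$ (recall $\bar\tau^p=(g-1)^p=g^p-1=0$ over $\IF_p$, and $\bar\sigma=\bar\tau^{p-1}$). First I would fix a $G$-CW structure on $X$ in which $X^G$ is a subcomplex; this exists because $X$ is a smooth compact manifold with a smooth $G$-action and hence admits a $G$-equivariant triangulation. Since $G$ has prime order $p$, every cell has stabiliser either $G$ (if it lies in $X^G$) or $\{1\}$, so in each degree $n$ the cellular chain group splits as an $\IF_p[G]$-module into a trivial part coming from the fixed cells and a free part coming from the free orbits:
$$
C_n(X)\cong N_1^{\oplus a_n}\oplus N_p^{\oplus b_n},\qquad C_n(X^G)=N_1^{\oplus a_n},
$$
where $N_p\cong\IF_p[G]$ and $a_n,b_n$ count the fixed $n$-cells and the free $n$-orbits. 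All the maps ($\iota$, $\bar\tau$, $\bar\tau^{p-i}$) commute with the boundary because $\partial$ is $G$-equivariant, so it suffices to verify exactness in each fixed degree $n$.

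For parts (\ref{SmithSuite1}) and (\ref{SmithSuite2}) I would compute kernels and images summand by summand using the ideal chain $\IF_p[G]=(\bar\tau^0)\supset(\bar\tau^1)\supset\cdots\supset(\bar\tau^{p-1})=(\bar\sigma)\supset 0$. On the trivial summand $\bar\tau$ acts as zero, while on $N_p\cong\IF_p[G]$ one has $\bar\tau^jC_n=(\bar\tau^j)^{\oplus b_n}$ and $\ker(\bar\tau^j\colon\IF_p[G]\to\IF_p[G])=(\bar\tau^{p-j})$. For $1\le i\le p-1$ this gives $\ker(\bar\tau^{p-i})=N_1^{\oplus a_n}\oplus(\bar\tau^i)^{\oplus b_n}=C_n(X^G)\oplus\bar\tau^iC_n$, which is exactly the image of $\iota$; injectivity of $\iota$ holds because $C_n(X^G)$ and $\bar\tau^iC_n\subset N_p^{\oplus b_n}$ lie in complementary summands. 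This proves (\ref{SmithSuite1}). Likewise the restriction of $\bar\tau$ to $\bar\tau^iC_n=(\bar\tau^i)^{\oplus b_n}$ surjects onto $\bar\tau^{i+1}C_n$ with kernel $(\bar\tau^{p-1})^{\oplus b_n}=\bar\sigma C_n$ (here $i\le p-1$ ensures $(\bar\tau^{p-1})\subseteq(\bar\tau^i)$), which gives (\ref{SmithSuite2}).

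For part (\ref{SmithSuite3}) I would identify $\bar\sigma C_*(X)$ with the relative chains of the orbit space directly. In degree $n$ the operator $\bar\sigma=\bar\tau^{p-1}$ annihilates the fixed cells and sends a representative $c_\alpha$ of the $\alpha$-th free orbit to the orbit sum $\sum_{j=0}^{p-1}g^jc_\alpha$; these sums form a basis of $\bar\sigma C_n(X)$ indexed by the free $n$-orbits, that is, by the $n$-cells of $X/G$ not lying in the image of $X^G$. The quotient map $\pi\colon X\to X/G$ therefore induces a degreewise bijection $\bar\sigma c_\alpha\mapsto\pi(c_\alpha)$ onto $C_n(X/G,X^G)$, and it is a chain map because $\partial\bar\sigma c=\bar\sigma\partial c$ collapses the free-orbit boundary terms to their orbit sums and kills the fixed-cell terms, matching the relative boundary downstairs where the fixed cells become zero.

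The classical content is entirely in the first paragraph, so the main obstacle is not the ensuing bookkeeping but securing an \emph{equivariant} cell structure for which $X^G$ is a genuine subcomplex and every chain group is a permutation module; once this is in place the module-theoretic decomposition makes all three sequences transparent. A secondary point requiring care is part (\ref{SmithSuite3}): one must check that the identification is an isomorphism of \emph{complexes} and not merely a degreewise bijection, which amounts to verifying that the boundary terms of $\partial c_\alpha$ supported on $X^G$ are precisely those annihilated by $\bar\sigma$ and by passage to $(X/G,X^G)$.
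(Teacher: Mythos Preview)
Your argument is correct and is essentially the classical one. The paper itself does not prove this proposition: each of the three parts is simply quoted from Bredon with a page or theorem reference, so there is no ``paper's own proof'' to compare against beyond that citation. What you have written is in effect a self-contained reconstruction of Bredon's argument: choose a $G$-CW structure (Illman's equivariant triangulation theorem for smooth actions suffices here), observe that in each degree the chain group is a permutation $\IF_p[G]$-module of the form $N_1^{\oplus a_n}\oplus N_p^{\oplus b_n}$ because stabilisers are $\{1\}$ or $G$, and then read off all three statements from the ideal filtration $(\bar\tau^0)\supset(\bar\tau^1)\supset\cdots\supset(\bar\tau^{p-1})=(\bar\sigma)\supset 0$ of the local ring $\IF_p[G]\cong\IF_p[\bar\tau]/(\bar\tau^p)$. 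Your verification that the bijection in part~(\ref{SmithSuite3}) commutes with the differential is the only place requiring any care, and you handle it correctly: the key point is that for a cell $d$ in a free orbit one has $\bar\sigma d=\bar\sigma c_\beta$ for the chosen representative $c_\beta$, so the coefficient of $\bar\sigma c_\beta$ in $\bar\sigma\partial c_\alpha$ is exactly the sum $\sum_{d\in G\cdot c_\beta}[d:c_\alpha]$, which is the incidence number $[\bar c_\beta:\bar c_\alpha]$ in the quotient CW structure.
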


\begin{proposition}{\cite[p.124 (3.7)]{Bredon}}
\label{prop:ExactSeq}
If $p>2$ then for any $k\in\IN$ there is a commutative diagram of $\IF_p$-vector spaces with exact rows:
$$
\xymatrix{0\ar[r]& \sigma C_k(X)\oplus C_k(X^G)\ar[r]\ar[d]^{\iota\oplus\id}&C_k(X)\ar[r]\ar[d]^{\id}&\tau C_k(X)\ar[r]\ar[d]^{\tau^{p-2}}& 0\\
0\ar[r]& \tau C_k(X)\oplus C_k(X^G)\ar[r]& C_k(X)\ar[r]& \sigma C_k(X)\ar[r]& 0}
$$
\end{proposition}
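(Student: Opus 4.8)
The plan is to recognise both rows of the diagram as specialisations of the Smith exact sequence already established in Proposition~\ref{prop:SmithSuite}(\ref{SmithSuite1}), and then to produce the three vertical maps and verify that the two resulting squares commute. Throughout I would work in a fixed degree $k$ and suppress the differentials, so that everything takes place in the category of $\IF_p[G]$-modules, $\tau$ and $\sigma$ denoting the operators induced by $g-1$ and $1+g+\cdots+g^{p-1}$ on the $\IF_p$-chains $C_k(X)$; the one identity doing all the work is $\sigma=\tau^{p-1}$ (the relation $\bar\sigma=\bar\tau^{p-1}$ over $\IF_p$). For the rows I would simply invoke Proposition~\ref{prop:SmithSuite}(\ref{SmithSuite1}) twice: taking $i=p-1$ and using $\tau^{p-1}=\sigma$ gives the top row
$$
0\lra \sigma C_k(X)\oplus C_k(X^G)\lra C_k(X)\overset{\tau}{\lra}\tau C_k(X)\lra 0,
$$
while taking $i=1$ gives the bottom row
$$
0\lra \tau C_k(X)\oplus C_k(X^G)\lra C_k(X)\overset{\sigma}{\lra}\sigma C_k(X)\lra 0,
$$
where the leftmost maps are the sums of inclusions. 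Exactness of both rows is then free, and no further argument is needed there.

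Next I would exhibit the vertical maps. The middle one is the identity of $C_k(X)$. For the left map, the inclusion $\iota\colon\sigma C_k(X)\hookrightarrow\tau C_k(X)$ is legitimate because $\sigma C_k(X)=\tau^{p-1}C_k(X)\subseteq\tau C_k(X)$; pairing it with the identity on $C_k(X^G)$ produces the monomorphism $\iota\oplus\id$. For the right map I must check that $\tau^{p-2}$ really carries $\tau C_k(X)$ into $\sigma C_k(X)$: every element of $\tau C_k(X)$ has the form $\tau c$, and $\tau^{p-2}(\tau c)=\tau^{p-1}c=\sigma c\in\sigma C_k(X)$, so the target is as displayed.

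Finally I would check the two squares. The left square commutes trivially, since both horizontal maps are sums of inclusions into $C_k(X)$, the middle map is the identity, and $\iota\oplus\id$ does not alter the underlying chains, so both composites send $(a,f)$ to $a+f$. The right square commutes because the top horizontal map is $\tau$ and the bottom one is $\sigma$, whence for $c\in C_k(X)$ one has $\tau^{p-2}(\tau c)=\tau^{p-1}c=\sigma c$. I do not expect any genuine obstacle here: the whole statement reduces to Proposition~\ref{prop:SmithSuite}(\ref{SmithSuite1}) plus the single identity $\sigma=\tau^{p-1}$, and the only point requiring care is the type-checking of $\iota$ and $\tau^{p-2}$, which that identity supplies. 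As an alternative one could bypass Proposition~\ref{prop:SmithSuite} altogether and argue directly from the $\IF_p[G]$-module decomposition $C_k(X)\cong\IF_p[G]^{\oplus a_k}\oplus\IF_p^{\oplus b_k}$ into free cell-orbits and fixed cells, the trivial summand being exactly $C_k(X^G)$; on this decomposition all four modules and all five maps become completely explicit, but building on the already-cited Smith sequence is the shorter route.
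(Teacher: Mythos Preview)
Your argument is correct. The paper does not supply its own proof of this proposition; it is stated with a citation to Bredon and used as a black box. Your derivation---specialising Proposition~\ref{prop:SmithSuite}(\ref{SmithSuite1}) to $i=p-1$ and $i=1$ for the two rows, then defining the vertical maps via the identity $\bar\sigma=\bar\tau^{p-1}$ and checking the two squares---is exactly the standard verification and fills in what the paper omits.
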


The \emph{Smith homology groups} are defined by $H_k^{\tau^i}(X)\coloneqq H_k(\bar\tau^i C_*(X))$, the corresponding cohomology 
groups are $H^k_{\tau^i}(X))$, whose dimensions over $\IF_p$ are denoted by $h^k_{\tau^i}(X)$. 
We first give some direct consequences of these sequences.

\begin{lemma}\text{}
\label{lem:H0}
\begin{enumerate}
\item\label{lemma:H0:1} If $X^G\neq\emptyset$ then $h^0_\sigma(X)=0$ and $h^0_\tau(X)=0$.
\item\label{lemma:H0:2} If $X^G=\emptyset$ then $h^0_\sigma(X)=1$ and $h^0_\tau(X)=1$.
\end{enumerate}
\end{lemma}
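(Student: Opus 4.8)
The plan is to work entirely at the level of homology. Since the coefficients lie in the field $\IF_p$, the Smith cohomology groups are dual to the Smith homology groups, so $h^0_\sigma(X)=\dim_{\IF_p}H_0^\sigma(X)$ and $h^0_\tau(X)=\dim_{\IF_p}H_0^\tau(X)$; it thus suffices to compute the degree-zero homology of the subcomplexes $\bar\sigma C_*(X)$ and $\bar\tau C_*(X)$. The essential geometric input is that $X$ is connected, hence so is the quotient $X/G$, and both statements~(\ref{lemma:H0:1}) and~(\ref{lemma:H0:2}) will come out of a single case analysis on whether $X^G$ is empty.

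For $h^0_\sigma(X)$ I would invoke Proposition~\ref{prop:SmithSuite}(\ref{SmithSuite3}), which gives a chain isomorphism $\bar\sigma C_*(X)\cong C_*(X/G,X^G)$ and therefore $H_0^\sigma(X)\cong H_0(X/G,X^G)$. The tail of the long exact sequence of the pair reads
$$
H_0(X^G)\lra H_0(X/G)\lra H_0(X/G,X^G)\lra 0,
$$
and $H_0(X/G)\cong\IF_p$ since $X/G$ is connected. If $X^G\neq\emptyset$ the left-hand map is surjective (the image of any fixed point generates $H_0(X/G)$), so the cokernel vanishes and $h^0_\sigma(X)=0$; if $X^G=\emptyset$ then $H_0(X^G)=0$ and the cokernel is all of $H_0(X/G)$, giving $h^0_\sigma(X)=1$.

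For $h^0_\tau(X)$ I would specialise Proposition~\ref{prop:SmithSuite}(\ref{SmithSuite1}) to $i=p-1$, where $\bar\tau^{p-1}=\bar\sigma$, obtaining the short exact sequence of complexes $0\to\bar\sigma C_*(X)\oplus C_*(X^G)\xrightarrow{\iota}C_*(X)\xrightarrow{\bar\tau}\bar\tau C_*(X)\to 0$. Its long exact sequence identifies $H_0^\tau(X)$ with the cokernel of the map $\iota_*\colon H_0^\sigma(X)\oplus H_0(X^G)\to H_0(X)\cong\IF_p$. The decisive point is that $\iota_*$ vanishes on the summand $H_0^\sigma(X)$: every $0$-cycle there has the form $\bar\sigma c$, and since $g$ permutes the $0$-simplices the augmentation satisfies $\epsilon(\bar\sigma c)=p\,\epsilon(c)=0$ in $\IF_p$, so its class in $H_0(X)$ is trivial. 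On the summand $H_0(X^G)$ the map $\iota_*$ is induced by the inclusion $X^G\hookrightarrow X$, hence is surjective precisely when $X^G\neq\emptyset$. The same dichotomy as before then yields $h^0_\tau(X)=0$ when $X^G\neq\emptyset$ and $h^0_\tau(X)=1$ when $X^G=\emptyset$.

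The long exact sequences and the connectivity of $X/G$ are routine; the one step I expect to require care is the augmentation computation showing that the inclusion $\bar\sigma C_*(X)\hookrightarrow C_*(X)$ induces the zero map on $H_0$. This vanishing, driven purely by the fact that $\bar\sigma$ replaces a chain by $p$ copies of its orbit, is exactly what forces the extra class to survive in $H_0^\tau(X)$ (and, through Proposition~\ref{prop:SmithSuite}(\ref{SmithSuite3}), in $H_0^\sigma(X)$) in the fixed-point-free case. Everything else is bookkeeping with the Smith sequences of Proposition~\ref{prop:SmithSuite}.
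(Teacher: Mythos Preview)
Your argument is correct. For $h^0_\sigma$ it is exactly the homological dual of the paper's computation via the pair $(X/G,X^G)$. For $h^0_\tau$ you take a slightly different route: the paper splits into cases, using Proposition~\ref{prop:SmithSuite}(\ref{SmithSuite2}) when $X^G\neq\emptyset$ (the chain $H^0_{\tau^i}\cong H^0_{\tau^{i+1}}$) and two instances of Proposition~\ref{prop:SmithSuite}(\ref{SmithSuite1}) when $X^G=\emptyset$; you instead use the single sequence from Proposition~\ref{prop:SmithSuite}(\ref{SmithSuite1}) with $i=p-1$ and handle both cases at once via the augmentation computation $\epsilon(\bar\sigma c)=p\,\epsilon(c)=0$. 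That vanishing is the homological counterpart of the paper's observation that $\bar\sigma$ acts as zero on $H^0(X,\IF_p)$, so the underlying idea is the same; your packaging is a bit more uniform.
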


\begin{proof} First compute for $\sigma$.
By Proposition~\ref{prop:SmithSuite}(\ref{SmithSuite3}), we have $H^0_\sigma(X)\cong H^0(X/G,X^G;\IF_p)$. From the exact sequence of the pair $(X/G,X^G)$:
$$
0\lra C_*(X^G)\lra C_*(X/G)\lra C_*(X/G,X^G)\lra 0
$$
one gets the exact sequence in cohomology:
$$
0\lra H^0(X/G,X^G;\IF_p)\lra H^0(X/G,\IF_p)\xrightarrow{\iota^*} H^0(X^G,\IF_p)\lra\cdots
$$
where $\iota^*$ is induced by the inclusion $\iota\colon X^G\hookrightarrow X/G$. Since $X$ is connected, $X/G$ is also connected. If $X^G\neq\emptyset$, one has that $\iota^*\neq 0$ and $H^0(X/G,\IF_p)\cong\IF_p$ 
so $H^0_\sigma(X)=0$. Otherwise $H^0(X/G,X^G;\IF_p)\cong\IF_p$.

We now compute for $\tau$. Assume that $X^G\neq\emptyset$. By Proposition~\ref{prop:SmithSuite}(\ref{SmithSuite2}) one
 gets for all $1\leq i\leq p-1$ an isomorphism
 $H^0_{\tau^i}(X)\cong H^0_{\tau^{i+1}}(X)$ so $H^0_\tau(X)=H^0_\sigma(X)=0$. If $X^G=\emptyset$, by Proposition~\ref{prop:SmithSuite}(\ref{SmithSuite1}) 
one gets the exact sequence:
$$
0\lra H^0_\tau(X)\lra H^0(X,\IF_p)\xrightarrow{\bar\sigma^*} H^0_\sigma(X)\lra\cdots
$$
We show that $\bar\sigma^*=0$. By Proposition~\ref{prop:SmithSuite}(\ref{SmithSuite1}) one has an exact sequence:
$$
0\lra H^0_\sigma(X)\xrightarrow{\iota^*} H^0(X,\IF_p)\xrightarrow{\bar\tau^*} H^0_\tau(X)\lra\cdots
$$
so $\iota^*$ is injective. The composition $H^0(X,\IF_p)\xrightarrow{\bar\sigma^*} H^0_\sigma(X)\xrightarrow{\iota^*}H^0(X,\IF_p)$
is the action of $\bar\sigma\in\IF_p[G]$. Observe that the action of $g\in G$ on $H^0(X,\IF_p)\cong\IF_p$ is trivial
since $\IF_p$ has no order $p$ automorphism, so $\bar\sigma$ acts trivially on $H^0(X,\IF_p)$, that is $\bar\sigma^*\circ\iota^*=0$. Since $\iota^*$ is injective, this implies that $\bar\sigma^*=0$. It follows that $H^0_\tau(X)\cong H^0(X,\IF_p)\cong\IF_p$. 
\end{proof}

\begin{lemma}
\label{lem:Hodd(X)=0}
Assume that $X$ is even-dimensional, $H^\text{odd}(X,\IF_p)=0$ and $X^G\neq\emptyset$. Set $2d\coloneqq \dim_\IR X$. Then:
\begin{enumerate}
\item\label{lemHodd1} $h^1_\tau(X)=h^1_\sigma(X)=h^0(X^G\IF_p)-1$.
\item\label{lemHodd2} For $0\leq k\leq d-1$, one has $h^{2k+1}_\tau(X)=h^{2k+1}_\sigma(X)$.
\item\label{lemHodd3} If $\dim_\IR X^G\leq\dim_\IR X-2$ then:
$$
h^{2d-1}_\sigma(X)=h^{2d-1}_\tau(X)=h^{2d}_\sigma(X)=h^{2d}_\tau(X)=1.
$$
\end{enumerate}
\end{lemma}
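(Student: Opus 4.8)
The plan is to deduce all three statements from the dualised Smith long exact sequences, so I first record them. Applying $\Hom_{\IF_p}(-,\IF_p)$ to the short exact sequences of Proposition~\ref{prop:SmithSuite}(\ref{SmithSuite1}) in the two extreme cases $i=1$ and $i=p-1$ (recall $\bar\sigma=\bar\tau^{p-1}$) produces two long exact sequences of finite-dimensional $\IF_p$-vector spaces,
\begin{align*}
&\cdots\to H^k_\sigma(X)\to H^k(X,\IF_p)\to H^k_\tau(X)\oplus H^k(X^G,\IF_p)\to H^{k+1}_\sigma(X)\to\cdots,\\
&\cdots\to H^k_\tau(X)\to H^k(X,\IF_p)\to H^k_\sigma(X)\oplus H^k(X^G,\IF_p)\to H^{k+1}_\tau(X)\to\cdots,
\end{align*}
which I call the $\sigma$- and the $\tau$-sequence. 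The standing hypothesis $H^\odd(X,\IF_p)=0$ forces every odd-degree term $H^{2k+1}(X,\IF_p)$ to vanish, and this is exactly what makes these sequences split into short, computable pieces.

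For (\ref{lemHodd1}) I would read off the bottom of each sequence. As $X$ is connected, $H^0(X,\IF_p)\cong\IF_p$; by Lemma~\ref{lem:H0}(\ref{lemma:H0:1}) one has $h^0_\sigma(X)=h^0_\tau(X)=0$, and $H^1(X,\IF_p)=0$. Substituting these into the $\sigma$-sequence leaves the exact piece $0\to H^0(X,\IF_p)\to H^0(X^G,\IF_p)\to H^1_\sigma(X)\to 0$, whose first map is the restriction along $X^G\hookrightarrow X$; since $X^G\neq\emptyset$ this map is injective, so $H^1_\sigma(X)$ is its cokernel, of dimension $h^0(X^G,\IF_p)-1$. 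The $\tau$-sequence gives the identical computation for $H^1_\tau(X)$.

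For (\ref{lemHodd2}) the vanishing $H^{2k+1}(X,\IF_p)=0$ lets the $\sigma$-sequence identify $H^{2k+1}_\sigma(X)$ with the cokernel of $\phi\colon H^{2k}(X,\IF_p)\to H^{2k}_\tau(X)\oplus H^{2k}(X^G,\IF_p)$, while the $\tau$-sequence identifies $H^{2k+1}_\tau(X)$ with the cokernel of $\phi'\colon H^{2k}(X,\IF_p)\to H^{2k}_\sigma(X)\oplus H^{2k}(X^G,\IF_p)$. To compare these two cokernels I would dualise the morphism of short exact sequences of Proposition~\ref{prop:ExactSeq} (for $p=2$ there is nothing to prove since $\bar\sigma=\bar\tau$): this yields a morphism from the $\sigma$-sequence to the $\tau$-sequence which is the identity on $H^k(X,\IF_p)$, the identity on the $H^k(X^G,\IF_p)$-summands, and is induced by $\bar\tau^{p-2}$ on the Smith groups, and which satisfies $\phi'=(\text{comparison})\circ\phi$. \emph{The technical heart of the argument}, and the step I expect to be the main obstacle, is to check that the induced map $H^{2k+1}_\sigma(X)\to H^{2k+1}_\tau(X)$ on cokernels is an isomorphism; this is precisely where $H^\odd(X,\IF_p)=0$ is essential, as it rules out the free $\IF_p[G]$-summands in odd total degree on which $\bar\tau^{p-2}$ fails to be invertible.

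For (\ref{lemHodd3}) I work at the top, where $X$ is a closed connected orientable $2d$-manifold, so $H^{2d}(X,\IF_p)\cong\IF_p$ and $H^{2d-1}(X,\IF_p)=0$, while the hypothesis $\dim_\IR X^G\leq 2d-2$ gives $H^{2d-1}(X^G,\IF_p)=H^{2d}(X^G,\IF_p)=0$. First I compute $h^{2d}_\sigma(X)$: by Proposition~\ref{prop:SmithSuite}(\ref{SmithSuite3}) one has $H^{2d}_\sigma(X)\cong H^{2d}(X/G,X^G;\IF_p)$, and the vanishing of $H^{2d-1}(X^G,\IF_p)$ and $H^{2d}(X^G,\IF_p)$ identifies this with $H^{2d}(X/G,\IF_p)$. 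Since $X^G$ has codimension at least two, the singular set of $X/G$ has codimension at least two, so $X/G$ is a closed connected orientable $2d$-dimensional pseudomanifold (the action is orientation-preserving), whence $H^{2d}(X/G,\IF_p)\cong\IF_p$ and $h^{2d}_\sigma(X)=1$. The top of the $\sigma$-sequence then collapses to
$$0\to H^{2d-1}_\tau(X)\xrightarrow{\partial} H^{2d}_\sigma(X)\xrightarrow{r} H^{2d}(X,\IF_p)\to H^{2d}_\tau(X)\to 0.$$
The decisive point is that $r=0$: under $H^{2d}_\sigma(X)\cong H^{2d}(X/G,X^G;\IF_p)$ the map $r$ is dual to the map $H_{2d}(X,\IF_p)\to H_{2d}(X/G,X^G;\IF_p)$ induced by the projection $\pi\colon X\to X/G$, and $\pi_\ast[X]=p\,[X/G]=0$ in $H_{2d}(X/G,\IF_p)$ because $\pi$ has degree $p$ (the branch locus, being of codimension at least two, does not affect the top class). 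With $r=0$ and $h^{2d}_\sigma(X)=1$, exactness forces $\partial$ to be an isomorphism, so $h^{2d-1}_\tau(X)=1$, and then $H^{2d}(X,\IF_p)\to H^{2d}_\tau(X)$ is an isomorphism, so $h^{2d}_\tau(X)=1$. Running the same argument with $\sigma$ and $\tau$ interchanged (or invoking (\ref{lemHodd2}) for $k=d-1$) gives $h^{2d-1}_\sigma(X)=1$, completing the proof.
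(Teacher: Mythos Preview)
Your arguments for (\ref{lemHodd1}) and (\ref{lemHodd3}) are correct. For (\ref{lemHodd1}) you do exactly what the paper does. For (\ref{lemHodd3}) you take a different route: the paper never argues that the map $r$ vanishes, but instead squeezes $h^{2d}_\tau(X)$ between $1$ and $1$ using the $\sigma$-sequence (giving $h^{2d}_\tau(X)\leq 1$) and the sequence of Proposition~\ref{prop:SmithSuite}(\ref{SmithSuite2}) for $i=1$ (giving $h^{2d}_\tau(X)\geq 1$ via surjectivity onto $H^{2d}_\sigma(X)$). Your degree argument $\pi_\ast[X]=p\,[X/G]=0$ is a legitimate alternative and slightly more geometric.

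The gap is in (\ref{lemHodd2}). You correctly identify $H^{2k+1}_\sigma(X)$ and $H^{2k+1}_\tau(X)$ as cokernels and set up the comparison map induced by $\bar\tau^{p-2}$, but you then \emph{defer} the key step, calling it ``the main obstacle'' and offering only a heuristic about free $\IF_p[G]$-summands. That heuristic does not apply: the Smith groups $H^{2k+1}_\sigma(X)$ are not $\IF_p[G]$-modules in any way that makes ``$\bar\tau^{p-2}$ is invertible off the free part'' meaningful. Worse, the paper itself only establishes that this very map $\bar\tau^{p-2}_{2k+1}$ is injective under the \emph{additional} hypothesis $H^{\odd}(X/G,\IF_p)=0$ (see the diagram chase in the proof of Proposition~\ref{prop:BorelSwan_pqqe}), and even then uses the already-proven dimension equality of (\ref{lemHodd2}) to upgrade injectivity to an isomorphism. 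So your strategy is circular as stated, and the hoped-for isomorphism need not hold without the extra hypothesis on $X/G$.

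The paper's proof of (\ref{lemHodd2}) avoids this entirely: it never touches the comparison map. Instead, it writes down the six-term exact sequence around degree $2k$ coming from $i=1$, takes the alternating sum of dimensions, does the same for $i=p-1$, and subtracts the two equations. Everything cancels except
\[
h^{2k-1}_\sigma(X)-h^{2k-1}_\tau(X)=h^{2k+1}_\sigma(X)-h^{2k+1}_\tau(X),
\]
and then (\ref{lemHodd1}) supplies the base case $h^1_\sigma(X)=h^1_\tau(X)$ for an induction on $k$. This is a two-line dimension count; you should replace your cokernel-comparison plan with it.
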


\begin{proof}\text{} 
\par{(\ref{lemHodd1})} Proposition~\ref{prop:SmithSuite}(\ref{SmithSuite1}) for $i=p-1$ and Lemma~\ref{lem:H0} give the exact sequence:
$$
0\lra H^0(X,\IF_p)\lra H^0_\tau(X)\oplus H^0(X^G,\IF_p)\lra H^1_\sigma(X)\lra 0
$$
that implies $h^1_\sigma(X)=h^0(X^G)-1$. By interchanging the roles of $\tau$ and $\sigma$ one gets the second equality.

\par{(\ref{lemHodd2})} Similarly, one gets for $0\leq k\leq d-1$ an exact sequence:
$$
0\rightarrow \substack{H^{2k-1}_\sigma(X)\\\oplus\\ H^{2k-1}(X^G,\IF_p)}\rightarrow H^{2k}_\tau(X) \rightarrow H^{2k}(X,\IF_p)\rightarrow
\substack{H^{2k}_\sigma(X)\\\oplus\\ H^{2k}(X^G,\IF_p)}\rightarrow H^{2k+1}_\tau(X)\rightarrow 0  
$$
that implies the equality:
$$
h^{2k-1}_\sigma(X)+h^{2k-1}(X^G,\IF_p)-h^{2k}_\tau(X)+h^{2k}(X,\IF_p)-h^{2k}_\sigma(X)-h^{2k}(X^G,\IF_p)+h^{2k+1}_\tau(X)=0.
$$
By interchanging the roles of $\tau$ and $\sigma$ one gets a second equality on dimensions. Substracting these equalities, one finally obtains:
$$
h^{2k-1}_\sigma(X)-h^{2k-1}_\tau(X)=h^{2k+1}_\sigma(X)-h^{2k+1}_\tau(X).
$$
Using (\ref{lemHodd1}) one concludes.

\par{(\ref{lemHodd3})} Clearly $h^k_\tau(X)=0=h^k_\sigma(X)$ for $k>2d$. Proposition~\ref{prop:SmithSuite}(\ref{SmithSuite1}) for $i=p-1$ gives the exact sequence:
$$
0\lra H^{2d-1}_\sigma(X)\lra H^{2d}_\tau(X)\lra H^{2d}(X,\IF_p)\lra H^{2d}_\sigma(X)\lra 0
$$
yielding the equality $h^{2d-1}_\sigma(X)-h^{2d}_\tau(X)+1-h^{2d}_\sigma(X)=0$.
Proposition~\ref{prop:SmithSuite}(\ref{SmithSuite3}) and the exact sequence of the pair $(X/G,X^G)$ gives $h^{2d}_\sigma(X)=1$ so $h^{2d-1}_\sigma(X)=h^{2d}_\tau(X)$. By interchanging the roles of $\sigma$ and $\tau$ one gets an exact sequence:
$$
0\lra H^{2d-1}_\tau(X)\lra H^{2d}_\sigma(X)\lra H^{2d}(X,\IF_p)\lra H^{2d}_\tau(X)\lra 0
$$
that implies that $h^{2d}_\tau(X)\leq 1$. Proposition~\ref{prop:SmithSuite}(\ref{SmithSuite2}) for $i=1$ gives an exact sequence:
$$
H^{2d-1}_\tau(X)\lra H^{2d-1}_\sigma(X)\lra H^{2d}_{\tau^2}(X)\lra H^{2d}_\tau(X)\lra H^{2d}_\sigma(X)\lra 0
$$
that implies that $h^{2d}_\tau(X)\geq 1$. Finally $h^{2d}_\tau(X)=1$ and one conludes with (\ref{lemHodd2}).
\end{proof}

Using similar arguments, one can show the following result in the case where the fixed locus is empty:

\begin{lemma}
\label{lem:XG=0}
Assume that $X$ is even-dimensional ($2d\coloneqq \dim_\IR X)$, ${H^\text{odd}(X,\IF_p)=0}$  and $X^G=\emptyset$. Then:
\begin{enumerate}
\item\label{lemXG=01} $h^1_\tau(X)=h^1_\sigma(X)=1$.
\item\label{lemXG=02} For $0\leq k\leq d-1$, one has $h^{2k+1}_\tau(X)=h^{2k+1}_\sigma(X)=h^{2k+1}(X/G,\IF_p)$.
\item\label{lemXG=03} $h^{2d-1}_\sigma(X)=h^{2d-1}_\tau(X)=h^{2d}_\sigma(X)=h^{2d}_\tau(X)=1$.
\end{enumerate}
\end{lemma}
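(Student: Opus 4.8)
The plan is to mirror the proof of Lemma~\ref{lem:Hodd(X)=0}, the essential simplification being that $C_*(X^G)=0$ here, so every term $H^\ast(X^G,\IF_p)$ occurring in the Smith exact sequences of Proposition~\ref{prop:SmithSuite} disappears. Moreover, since $G$ has prime order and $X^G=\emptyset$, the action of $G$ on $X$ is \emph{free}, so $X/G$ is a closed connected orientable manifold of real dimension $2d$, and Proposition~\ref{prop:SmithSuite}(\ref{SmithSuite3}) gives $H^k_\sigma(X)\cong H^k(X/G,\IF_p)$ for all $k$. In particular $h^0_\sigma(X)=h^0(X/G,\IF_p)=1$ and $h^{2d}_\sigma(X)=h^{2d}(X/G,\IF_p)=1$, using connectedness and orientability of $X/G$.

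For (\ref{lemXG=01}), I would feed Proposition~\ref{prop:SmithSuite}(\ref{SmithSuite1}) with $i=p-1$ into the cohomology long exact sequence, which, using $H^1(X,\IF_p)=0$, reads
$$
0\lra H^0_\sigma(X)\lra H^0(X,\IF_p)\lra H^0_\tau(X)\lra H^1_\sigma(X)\lra 0.
$$
The three groups on the left are one-dimensional, by connectedness of $X$ and $X/G$ and by Lemma~\ref{lem:H0}(\ref{lemma:H0:2}); the first map is an injection between one-dimensional spaces, hence an isomorphism, which forces the second map to vanish and the third to be an isomorphism. Thus $h^1_\sigma(X)=h^0_\tau(X)=1$. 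Taking instead $i=1$ gives the same sequence with the roles of $\sigma$ and $\tau$ exchanged, whence $h^1_\tau(X)=h^0_\sigma(X)=1$.

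For (\ref{lemXG=02}), the equality $h^{2k+1}_\sigma(X)=h^{2k+1}(X/G,\IF_p)$ is immediate from $H^k_\sigma(X)\cong H^k(X/G,\IF_p)$. To obtain $h^{2k+1}_\tau(X)=h^{2k+1}_\sigma(X)$ I would write the long exact sequences attached to Proposition~\ref{prop:SmithSuite}(\ref{SmithSuite1}) for $i=p-1$ and for $i=1$; since $H^\odd(X,\IF_p)=0$, each breaks into an exact sequence of length five around degree $2k$, and the vanishing of the alternating sum of dimensions yields two identities. Subtracting them, all the degree-$2k$ contributions cancel and one is left with
$$
h^{2k+1}_\sigma(X)-h^{2k+1}_\tau(X)=h^{2k-1}_\sigma(X)-h^{2k-1}_\tau(X),
$$
so this difference is independent of $k$; by (\ref{lemXG=01}) it vanishes at $k=0$, hence for all $k$.

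For (\ref{lemXG=03}) I already have $h^{2d}_\sigma(X)=1$. The top-degree part of the long exact sequence for $i=p-1$ (again using $H^\odd(X,\IF_p)=0$ and that $2d$ is the top degree) is
$$
0\lra H^{2d-1}_\tau(X)\lra H^{2d}_\sigma(X)\lra H^{2d}(X,\IF_p)\lra H^{2d}_\tau(X)\lra 0,
$$
whose alternating sum of dimensions gives $h^{2d}_\tau(X)=h^{2d-1}_\tau(X)$. By (\ref{lemXG=02}) at $k=d-1$ this equals $h^{2d-1}_\sigma(X)=h^{2d-1}(X/G,\IF_p)$, and Poincar\'e duality on the closed oriented $2d$-manifold $X/G$ together with the universal coefficient theorem identifies $h^{2d-1}(X/G,\IF_p)=h^1(X/G,\IF_p)=h^1_\sigma(X)=1$, the last equality by (\ref{lemXG=01}); hence all four numbers equal $1$. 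The one point needing genuine care is this final step: one must confirm that $X/G$ is a closed orientable manifold so that Poincar\'e duality applies (this is where freeness of the action and the orientation-preserving hypothesis enter). I expect this to be the main obstacle, but it can be sidestepped to stay purely within Smith theory, exactly as in Lemma~\ref{lem:Hodd(X)=0}(\ref{lemHodd3}): one sandwiches $h^{2d}_\tau(X)$ between the bound $\le 1$ coming from the surjection $H^{2d}(X,\IF_p)\twoheadrightarrow H^{2d}_\tau(X)$ in the display above and the bound $\ge 1$ coming from Proposition~\ref{prop:SmithSuite}(\ref{SmithSuite2}) together with $h^{2d}_\sigma(X)=1$.
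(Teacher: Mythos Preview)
Your proof is correct and is exactly what the paper intends: the paper gives no proof for this lemma, merely stating ``Using similar arguments, one can show the following result,'' referring back to Lemma~\ref{lem:Hodd(X)=0}. Your argument faithfully reproduces that template with the $X^G$ terms deleted, and your observation that $H^k_\sigma(X)\cong H^k(X/G,\IF_p)$ (from Proposition~\ref{prop:SmithSuite}(\ref{SmithSuite3}) with $X^G=\emptyset$) cleanly handles the second equality in (\ref{lemXG=02}).

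One cosmetic point: your labelling of which value of $i$ produces which exact sequence is swapped relative to the paper's usage elsewhere (compare the sequence displayed in the proof of Lemma~\ref{lem:Hodd(X)=0}(\ref{lemHodd3})), though the paper itself is not entirely consistent on this. Since you invoke both $i=1$ and $i=p-1$ and the conclusions are symmetric in $\sigma,\tau$, nothing is affected. Your Poincar\'e duality shortcut in (\ref{lemXG=03}) is a pleasant alternative to the sandwiching argument; it is legitimate because the standing hypotheses in Section~\ref{sec:terminology} include that $G$ acts in an orientation-preserving way, so $X/G$ is indeed a closed orientable $2d$-manifold. You were right to also record the purely Smith-theoretic route, which is the direct analogue of the paper's argument in Lemma~\ref{lem:Hodd(X)=0}(\ref{lemHodd3}).
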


\subsection{A refinement of the Borel-Swan inequality}

\begin{proposition}\label{prop:BorelSwan_p=2} 
Assume that $p=2$, $X$ is even-dimensional ($2d\coloneqq \dim_\IR X$), $H^\even(X,\IZ)$ is torsion-free, $H^\text{odd}(X,\IZ)=0$, $X^G\neq\emptyset$ and $\dim_\IR X^G\leq 2d-2$. 
Then: 
$$
h^*(X^G,\IF_2)\leq h^*(X)-2\sum_{k=1}^{d-1} \aG^{2k}(X)
$$
with equality if $H^\odd(X/G,\IF_2)=0$.
\end{proposition}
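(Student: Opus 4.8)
The plan is to read the right-hand side as the total free $R$-rank of the $E_2$-page of the equivariant spectral sequence and to bound $h^\ast(X^G,\IF_2)$ by it through the Tor-formula of Proposition~\ref{prop:XGTor}. First I would observe that the minimal free resolution $0\to L_1\to L_0\to H^\ast_G(X,\IF_2)\to 0$ used in that proof, together with $\rank_R L_i=\dim_{\IF_2}\Tor^R_i(H^\ast_G(X,\IF_2),\IF_2)$ and $\nu=1$, gives $h^\ast(X^G,\IF_2)=\rank_R H^\ast_G(X,\IF_2)$, the free rank over $R=\IF_2[u]$. Since $E_\infty$ is the associated graded of a finite filtration on $H^\ast_G(X,\IF_2)$ and $\rank_R$ is additive on short exact sequences of finitely generated modules over the domain $R$, one has $\rank_R H^\ast_G(X,\IF_2)=\rank_R E_\infty$.

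The differentials $d_r$ are $R$-linear, so from $E_{r+1}=\ker d_r/\im d_r$ I get $\rank_R E_{r+1}=\rank_R E_r-2\,\rank_R(\im d_r)\leq\rank_R E_r$, whence $h^\ast(X^G,\IF_2)=\rank_R E_\infty\leq\rank_R E_2$. It then remains to compute $\rank_R E_2=\sum_s\rank_R H^\ast(G;H^s(X,\IF_2))$. By Lemma~\ref{lem:cohGadditif} a Jordan block $N_1$ in $H^s(X,\IF_2)$ contributes the free rank-one module $H^\ast(G;N_1)\cong R$, while a block $N_2=N_p$ contributes only $H^0(G;N_2)$, which is $R$-torsion; hence $\rank_R E_2=\sum_s\ell_1^s(X)$.

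By Remark~\ref{rem:decompCurtisReinerp=2} (equivalently Corollary~\ref{cor:linkaGlp}) one has $\aG^s(X)=\ell_2^s(X)$ and $h^s(X,\IF_2)=2\ell_2^s(X)+\ell_1^s(X)$, so $\sum_s\ell_1^s(X)=h^\ast(X)-2\sum_s\aG^s(X)$. The hypothesis $H^\odd(X,\IF_2)=0$ kills $\aG^s$ in odd degrees, and $\aG^0(X)=\aG^{2d}(X)=0$ because $H^0$ and $H^{2d}$ are trivial $G$-modules; thus $\sum_s\aG^s(X)=\sum_{k=1}^{d-1}\aG^{2k}(X)$ and the upper bound becomes exactly $h^\ast(X)-2\sum_{k=1}^{d-1}\aG^{2k}(X)$, which proves the inequality.

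For the equality I would turn to the classical Smith sequences, where $p=2$ gives $\bar\tau=\bar\sigma$ and Proposition~\ref{prop:SmithSuite}(\ref{SmithSuite3}) identifies the Smith cohomology with $H^\ast(X/G,X^G;\IF_2)$. The aim is to show that when $H^\odd(X/G,\IF_2)=0$ the defect $\rank_R E_2-\rank_R E_\infty$ — equivalently the rational ranks of the differentials, which here are necessarily of odd order, the first being $d_3$ — vanishes, so that $h^\ast(X^G,\IF_2)$ attains the bound $\sum_s\ell_1^s(X)$; alternatively this is the statement that the spectral sequence degenerates and Corollary~\ref{cor:formulep=2} applies directly. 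Concretely I would run the Smith long exact sequence of Proposition~\ref{prop:SmithSuite}(\ref{SmithSuite1}) as in Lemma~\ref{lem:Hodd(X)=0} alongside the cohomology sequence of the pair $(X/G,X^G)$; with $H^\odd(X,\IF_2)=0$ and $H^\odd(X/G,\IF_2)=0$ both sequences split into short exact pieces, and a degree-by-degree comparison should force the surviving connecting homomorphisms to vanish. The main obstacle is exactly this last step: Euler-characteristic bookkeeping only reproduces $\chi(X)=2\chi(X/G)-\chi(X^G)$ and never the individual Betti numbers, so one must genuinely track the ranks of the restriction $H^\ast(X)\to H^\ast(X^G)$ and of the connecting maps and show that the obstruction to maximality is carried by $H^\odd(X/G,\IF_2)$ — this is the precise meaning of the claim that the defect is contained in the $p$-torsion of $H^\ast(X/G)$.
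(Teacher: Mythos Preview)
Your inequality argument is correct and takes a genuinely different route from the paper's. The paper works directly with Smith's exact sequence (Proposition~\ref{prop:SmithSuite}(\ref{SmithSuite1}), with $\bar\tau=\bar\sigma$ for $p=2$): cutting at $\im(\alpha_{2k})=\Ker(\beta_{2k})$ and summing the resulting dimension identities over $1\leq k\leq d-1$, together with Lemma~\ref{lem:Hodd(X)=0}, gives the \emph{exact} formula
\[
h^*(X^G,\IF_2)=h^*(X)-2\sum_{k=1}^{d-1}\dim\im(\alpha_{2k}),
\]
and then $\alpha_{2k}\circ\beta'_{2k}=\bar\tau_{2k}$ yields $\im(\bar\tau_{2k})\subset\im(\alpha_{2k})$ with $\dim\im(\bar\tau_{2k})=\aG^{2k}(X)$. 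Your spectral-sequence approach, reading $h^*(X^G,\IF_2)=\rank_R E_\infty\leq\rank_R E_2=\sum_s\ell_1^s(X)$ via Proposition~\ref{prop:XGTor} and the $R$-linearity of the differentials, is more conceptual, avoids Smith's sequences entirely for the inequality, and in fact does not use the hypotheses $X^G\neq\emptyset$ or $\dim_\IR X^G\leq 2d-2$. The paper's method, on the other hand, isolates the defect as $\sum_k\bigl(\dim\im(\alpha_{2k})-\aG^{2k}(X)\bigr)$, which is exactly what must vanish for equality.

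For the equality case your proposal is only a sketch, and the missing step is genuine. You would need to show that $H^\odd(X/G,\IF_2)=0$ forces every $d_r$ to have $R$-torsion image, but you give no mechanism linking $H^*(X/G)$ to the differentials of the Borel spectral sequence; the Euler-characteristic observation you make is, as you say, insufficient. The paper's argument is a short diagram chase in the Smith framework: the component $\gamma''\colon H^{2k}(X^G,\IF_2)\to H^{2k+1}_\tau(X)$ coincides, via Proposition~\ref{prop:SmithSuite}(\ref{SmithSuite3}), with the connecting map of the pair $(X/G,X^G)$, so $H^{2k+1}(X/G,\IF_2)=0$ makes $\gamma''$ surjective. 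Then for any $y\in H^{2k}_\tau(X)$ one picks $y'\in H^{2k}(X^G,\IF_2)$ with $\gamma(y,-y')=0$, whence $(y,-y')=\beta_{2k}(z)$ and $\alpha_{2k}(y)=\alpha_{2k}\beta'_{2k}(z)=\bar\tau_{2k}(z)$, giving $\im(\alpha_{2k})=\im(\bar\tau_{2k})$. This is precisely the ``tracking of the restriction and connecting maps'' you allude to, but carrying it out requires setting up the Smith sequences, so for the equality you cannot stay purely on the spectral-sequence side.
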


\begin{proof}
For $1\leq k\leq d-1$, Proposition~\ref{prop:SmithSuite}(\ref{SmithSuite1}) gives exact an sequence:
$$
0\rightarrow \substack{H^{2k-1}_\tau(X)\\\oplus\\H^{2k-1}(X^G,\IF_2)}\rightarrow H^{2k}_\tau(X)\overset{\alpha_{2k}}{\rightarrow} H^{2k}(X,\IF_2)\overset{\beta_{2k}}{\rightarrow} \substack{H^{2k}_\tau(X)\\\oplus\\H^{2k}(X^G,\IF_2)}\overset{\gamma}{\rightarrow}H^{2k+1}_\tau(X)\rightarrow 0
$$
Using $\im(\alpha_{2k})=\Ker(\beta_{2k})$, this exact sequence cuts into two smaller exact sequences and taking the dimensions one gets the equations:
$$
\left\{
\begin{aligned}
h^{2k-1}_\tau(X)+h^{2k-1}(X^G,\IF_2)-h^{2k}_\tau(X)+\dim\im(\alpha_{2k})&=0\\
\dim\im(\alpha_{2k})-h^{2k}(X)+h^{2k}_\tau(X)+h^{2k}(X^G,\IF_2)-h^{2k+1}_\tau(X)&=0
\end{aligned}
\right.
$$
Summing up these equations, adding the contributions for $1\leq k\leq d-1$ and using Lemma~\ref{lem:Hodd(X)=0}(\ref{lemHodd1}) one gets:
$$
h^*(X^G,\IF_2)=h^*(X)-2\sum_{k=1}^{d-1}\dim\im(\alpha_{2k}).
$$
Denote the components by $\beta_{2k}=\beta'_{2k}\oplus\beta''_{2k}$ and $\gamma=(\gamma',\gamma'')$. Observe that: 
$$
\alpha_{2k}\circ\beta'_{2k}\colon H^{2k}(X,\IF_2)\to H^{2k}(X,\IF_2)
$$ 
is the multiplication by $\bar\tau$. For short, we put $\bar\tau_{2k}\coloneqq \bar\tau_{|H^{2k}(X,\IF_2)}$ and we have $\im(\bar\tau_{2k})\subset\im(\alpha_{2k})$. By Corollary~\ref{cor:linkZp}, $\dim\im(\bar\tau_{2k})=\aG^{2k}(X)$ so we get the expected inequality.

Take $x\in\im(\alpha_{2k})$ and write $x=\alpha_{2k}(y)$ with $y\in H^{2k}_\tau(X)$. Observe that $H^*(X/G,X^G;\IF_2)\cong H^*_\tau(X)$ by Proposition~\ref{prop:SmithSuite}(\ref{SmithSuite3}) since $\bar\tau=\bar\sigma$, so that $\gamma''$ also 
appears as the coboundary morphism of the exact sequence of the pair $(X/G,X^G)$:
$$
\cdots\rightarrow H^{2k}(X^G,\IF_2)\overset{\gamma''}{\rightarrow} H^{2k+1}_\tau(X)\overset{\eta}{\rightarrow} H^{2k+1}(X/G,\IF_2)\rightarrow\cdots
$$

Assume that $H^{2k+1}(X/G,\IF_2)=0$. Then $\gamma''$ is surjective, so there exists an element ${y'\in H^{2k}(X^G,\IF_2)}$ such that $\gamma''(y')=\gamma(y)$. This gives $\gamma(y-y')=0$ so there exists $z\in H^{2k}(X,\IF_2)$ such that ${\beta_{2k}(z)=y-y'}$. In particular $\beta'_{2k}(z)=y$. Then $x=\alpha_{2k}\beta'_{2k}(z)=\bar\tau_{2k}(z)$ giving the equality $\im(\bar\tau_{2k})=\im(\alpha_{2k})$ and we conclude as before.
\end{proof}

\begin{remark}
If $H^\odd(X/G,\IF_2)\neq 0$, the defect in this inequality can be completely understood by the second inequality:
$$
h^*(X)-2\sum_{k=1}^{d-1} \aG^{2k}(X)-2\sum_{k=1}^{d-1} h^{2k+1}(X/G,\IF_2)\leq h^*(X^G,\IF_2).
$$
To prove this inequality, we keep the notation of the proof, assuming now that $H^{2k+1}(X/G,\IF_2)\neq 0$. Consider the map:
$$
\varphi\colon H^{2k}_\tau(X)\lra H^{2k+1}(X/G,\IF_2),\quad y\mapsto \eta\gamma(y).
$$
If $\varphi(y)=0$, there exists $y'\in H^{2k}(X^G,\IF_2)$ such that $\gamma(y)=\gamma''(y')$ so $\gamma(y-y')=0$ and as above there exists $z\in H^{2k}(X,\IF_2)$ such that $\beta_{2k}'(z)=y$. This shows that $\alpha_{2k}(y)\in\im(\bar\tau_{2k})$. 
Conversely, if $y\in H^{2k}_\tau(X)$ is such that $\alpha_{2k}(y)\in\im(\bar\tau_{2k})$, then write $\alpha_{2k}(y)=\alpha_{2k}\beta'_{2k}(z)$ with $z\in H^{2k}(X,\IF_2)$. Setting $y'\coloneqq \beta'_{2k}(z)$, one has $y-y'\in\Ker(\alpha_{2k})$. Writting $\beta_{2k}(z)=\beta'_{2k}(z)+\beta''_{2k}(z)$ and applying $\gamma$ one gets:
$$
\gamma\beta'_{2k}(z)=\gamma'\beta'_{2k}(z)=-\gamma''\beta''_{2k}(z)
$$
so $\eta\gamma(y')=0$. This shows that $\alpha_{2k}^{-1}(\im(\bar\tau_{2k}))=\Ker(\varphi)+\Ker(\alpha_{2k})$. In particular, the map $\alpha_{2k}$ induces a 
surjection:
$$
H^{2k}_\tau(X)/\Ker(\varphi)\twoheadrightarrow \im(\alpha_{2k})/\im(\bar\tau_{2k}).
$$
Computing the dimensions, one gets:
$$
\dim\im(\alpha_{2k})-\dim\im(\bar\tau_{2k})\leq \dim\left(H^{2k}_\tau(X)/\Ker(\varphi)\right)\leq h^{2k+1}(X/G).
$$
We thus proved the inequalities:
$$
\dim\im(\bar\tau_{2k})\leq\dim\im(\alpha_{2k})\leq \dim\im(\bar\tau_{2k})+h^{2k+1}(X/G).
$$
and we conclude as above.\end{remark}

\begin{proposition}\label{prop:BorelSwan_pqqe} 
Assume that $3\leq p\leq 19$, $X$ is even-dimensional ($2d\coloneqq \dim_\IR X$), $H^\even(X,\IZ)$ is torsion-free, $H^\text{odd}(X,\IZ)=0$, $X^G\neq\emptyset$ and $\dim_\IR X^G\leq 2d-2$. Then: 
$$
h^*(X^G,\IF_p)\leq h^*(X)-2\sum_{k=1}^{d-1} \aG^{2k}(X)-(p-2)\sum_{k=1}^{d-1} \mG^{2k}(X)
$$
with equality if  $H^\odd(X/G,\IF_p)=0$.
\end{proposition}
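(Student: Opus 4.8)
The plan is to run, for $p\geq 3$, \emph{two} Smith sequences in parallel, mirroring the single sequence used in Proposition~\ref{prop:BorelSwan_p=2}. I would start from the two short exact sequences of Proposition~\ref{prop:SmithSuite}(\ref{SmithSuite1}) with $i=p-1$ and $i=1$ (equivalently the two rows of Proposition~\ref{prop:ExactSeq}), whose quotient maps are $\bar\tau$ and $\bar\sigma$. Passing to the associated long exact sequences in homology and dualising over $\IF_p$ (this is how $H^k_{\tau^i}(X)$ arises from $H_k^{\tau^i}$ with the correct variance), and using $H^\odd(X,\IF_p)=0$ to kill every second term, I obtain for each $1\leq k\leq d-1$ a five-term exact sequence
\[
0\to H^{2k-1}_\sigma\oplus H^{2k-1}(X^G)\to H^{2k}_\tau\xrightarrow{\alpha^\tau_{2k}} H^{2k}(X)\to H^{2k}_\sigma\oplus H^{2k}(X^G)\to H^{2k+1}_\tau\to 0,
\]
together with its twin obtained by interchanging $\tau$ and $\sigma$ throughout (central map $\alpha^\sigma_{2k}\colon H^{2k}_\sigma\to H^{2k}(X)$), where $\alpha^\tau_{2k},\alpha^\sigma_{2k}$ are dual to the pushforwards induced by $\bar\tau,\bar\sigma$, exactly as the map $\alpha_{2k}$ in the proof of Proposition~\ref{prop:BorelSwan_p=2}.

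Cutting each sequence at the image of its central map, taking alternating sums of dimensions, and \emph{adding} the two resulting identities, the even-degree Smith terms $h^{2k}_\tau,h^{2k}_\sigma$ cancel against each other and I am left with
\[
2\big(\dim\im\alpha^\tau_{2k}+\dim\im\alpha^\sigma_{2k}\big)=2h^{2k}(X)-2h^{2k-1}(X^G)-2h^{2k}(X^G)+o_k-o_{k-1},
\]
with $o_j:=h^{2j+1}_\sigma+h^{2j+1}_\tau$. Summing over $1\leq k\leq d-1$, the $o$-terms telescope to $o_{d-1}-o_0$, which Lemma~\ref{lem:Hodd(X)=0}(\ref{lemHodd1},\ref{lemHodd3}) evaluates as $2-2(h^0(X^G)-1)$, while the hypothesis $\dim_\IR X^G\leq 2d-2$ collects the fixed-point contributions over all degrees $1,\dots,2d-2$ into $h^*(X^G)-h^0(X^G)$. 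After the dust settles this gives the master identity
\[
h^*(X^G,\IF_p)=h^*(X)-\sum_{k=1}^{d-1}\big(\dim\im\alpha^\tau_{2k}+\dim\im\alpha^\sigma_{2k}\big),
\]
the exact analogue of $h^*(X^G)=h^*(X)-2\sum\dim\im\alpha_{2k}$ for $p=2$.

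To turn this into the stated bound I estimate the two ranks from below. Factoring multiplication by $\bar\tau$ (resp.\ $\bar\sigma$) on $C_*(X)$ through $\bar\tau C_*(X)$ (resp.\ $\bar\sigma C_*(X)$) shows, after dualising, that the rank of $\alpha^\tau_{2k}$ is at least that of $\bar\tau$ on $H^{2k}(X,\IF_p)$, and likewise for $\alpha^\sigma_{2k}$ and $\bar\sigma$. By Proposition~\ref{prop:decompCurtisReiner} the module $H^{2k}(X,\IF_p)$ is a sum of blocks $N_p,N_{p-1},N_1$ only, on which $\bar\tau$ has ranks $p-1,p-2,0$ and $\bar\sigma=\bar\tau^{p-1}$ has ranks $1,0,0$; hence, using Corollaries~\ref{cor:linkaGlp} and~\ref{cor:linkmGlp},
\[
\dim\im\big(\bar\tau_{|H^{2k}(X,\IF_p)}\big)=\aG^{2k}(X)+(p-2)\mG^{2k}(X),\qquad \dim\im\big(\bar\sigma_{|H^{2k}(X,\IF_p)}\big)=\aG^{2k}(X).
\]
Substituting these lower bounds into the master identity yields precisely $h^*(X^G,\IF_p)\leq h^*(X)-2\sum\aG^{2k}(X)-(p-2)\sum\mG^{2k}(X)$.

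For the equality under $H^\odd(X/G,\IF_p)=0$ I would argue as in Proposition~\ref{prop:BorelSwan_p=2} and the remark following it: through the identification $\bar\sigma C_*(X)\cong C_*(X/G,X^G)$ of Proposition~\ref{prop:SmithSuite}(\ref{SmithSuite3}), the $X^G$-component of the relevant connecting map is the coboundary of the pair $(X/G,X^G)$, which becomes surjective once $H^\odd(X/G,\IF_p)=0$; a diagram chase then upgrades each inclusion $\im\bar\tau\subseteq\im\alpha^\tau_{2k}$ and $\im\bar\sigma\subseteq\im\alpha^\sigma_{2k}$ to an equality, so both lower bounds above are attained. I expect this last step to be the main obstacle, because the clean identification with the quotient is available only for $\bar\sigma$: the $\bar\tau$-row must be controlled either through the tower of sequences in Proposition~\ref{prop:SmithSuite}(\ref{SmithSuite2}) relating $H^*_\tau$ to $H^*_\sigma$, or by chasing the commutative diagram of Proposition~\ref{prop:ExactSeq}, whose vertical map $\bar\tau^{p-2}$ links the two rows and should let the defect of \emph{both} rows be read off from $H^\odd(X/G,\IF_p)$.
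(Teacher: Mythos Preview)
Your proposal is correct and follows essentially the same route as the paper: you run the two Smith sequences from Proposition~\ref{prop:SmithSuite}(\ref{SmithSuite1}) with $i=1$ and $i=p-1$, arrive at the master identity $h^*(X^G,\IF_p)=h^*(X)-\sum_k\bigl(\dim\im\alpha^\tau_{2k}+\dim\im\alpha^\sigma_{2k}\bigr)$, bound the two ranks from below by $\dim\im\bar\tau_{2k}$ and $\dim\im\bar\sigma_{2k}$ via the factorisations $\tilde\alpha\circ\beta'=\bar\tau$ and $\alpha\circ\tilde\beta'=\bar\sigma$, and evaluate those via the Jordan block counts from Proposition~\ref{prop:decompCurtisReiner} and Corollaries~\ref{cor:linkaGlp}--\ref{cor:linkmGlp}. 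Your only cosmetic deviation is that you add the four cut identities symmetrically so that the even-degree Smith terms cancel and the odd ones telescope (the paper instead substitutes one sequence into the other to eliminate $h^{2k}_\sigma-h^{2k}_\tau$); for the equality case you have correctly located the crux---the paper indeed chases the commutative diagram of Proposition~\ref{prop:ExactSeq}, first showing that $\bar\tau^{p-2}_{2k+1}$ is an isomorphism (using surjectivity of $\gamma''$ from $H^\odd(X/G,\IF_p)=0$), then that $\iota^*$ is surjective, and from this deduces $\im\alpha^\sigma_{2k}=\im\bar\sigma_{2k}$ and $\im\alpha^\tau_{2k}=\im\bar\tau_{2k}$.
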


\begin{proof}
For $1\leq k\leq d-1$, Proposition~\ref{prop:SmithSuite}(\ref{SmithSuite1}) with $i=1$ gives exact an sequence:
$$
0\rightarrow \substack{H^{2k-1}_\tau(X)\\\oplus\\H^{2k-1}(X^G,\IF_p)}\rightarrow H^{2k}_\sigma(X)\overset{\alpha_{2k}}{\rightarrow} H^{2k}(X,\IF_p)\overset{\beta_{2k}}{\rightarrow} \substack{H^{2k}_\tau(X)\\\oplus\\H^{2k}(X^G,\IF_p)}\overset{\gamma}{\rightarrow} H^{2k+1}_\sigma(X)\rightarrow 0
$$
Using $\im(\alpha_{2k})=\Ker(\beta_{2k})$ one gets the equations:
$$
\left\{
\begin{aligned}
\dim\im(\alpha_{2k})-h^{2k}_\sigma(X)+h^{2k-1}_\tau(X)+h^{2k-1}(X^G,\IF_p)&=0\\
\dim\im(\alpha_{2k})-h^{2k}(X)+h^{2k}_\tau(X)+h^{2k}(X^G,\IF_p)-h^{2k+1}_\sigma(X)&=0
\end{aligned}
\right.
$$
Summing up these equations, adding the contributions for $1\leq k\leq d-1$ and using Lemma~\ref{lem:Hodd(X)=0} one gets:
$$
h^*(X^G,\IF_2)=h^*(X)+\sum_{k=1}^{d-1} (h^{2k}_\sigma(X)-h^{2k}_\tau(X))-2\sum_{k=1}^{d-1}\dim\im(\alpha_{2k}).
$$
Exchanging the roles of $\tau$ and $\sigma$ (Proposition~\ref{prop:SmithSuite}(\ref{SmithSuite1}) with $i=p-1$) one gets a similar exact sequence, where we denote by $\widetilde{\alpha},\widetilde{\beta}$ the corresponding maps. The same computation gives:
$$
\dim\im(\widetilde{\alpha}_{2k})-h^{2k}_\tau(X)+h^{2k-1}_\sigma(X)+h^{2k-1}(X^G,\IF_p)=0
$$ 
so we get the relation:
$$
\dim\im(\alpha_{2k})-\dim\im(\widetilde{\alpha}_{2k})=h^{2k}_\sigma(X)-h^{2k}_\tau(X).
$$
This gives:
$$
h^*(X^G,\IF_2)=h^*(X)-\sum_{k=1}^{d-1}\dim\im(\alpha_{2k}) -\sum_{k=1}^{d-1}\dim\im(\widetilde{\alpha}_{2k}).
$$
Denote the components by $\beta_{2k}=\beta'_{2k}\oplus\beta''_{2k}$ and $\widetilde{\beta}_{2k}=\widetilde{\beta}'_{2k}\oplus\widetilde{\beta}''_{2k}$. Note that $\alpha_{2k}\circ\widetilde{\beta}'_{2k}$ is the multiplication by $\bar\sigma$ in $H^{2k}(X,\IF_p)$, denoted $\bar\sigma_{2k}$, whereas $\widetilde{\alpha}_{2k}\circ\beta'_{2k}$ is the multiplication by $\bar\tau$ in $H^{2k}(X,\IF_p)$, denoted $\bar\tau_{2k}$. This shows that $\im(\bar\sigma_{2k})\subset\im(\alpha_{2k})$ and $\im(\bar\tau_{2k})\subset\im(\widetilde{\alpha}_{2k})$. By Corollary~\ref{cor:linkZp}, one has $\dim\im(\bar\sigma_{2k})=\aG^{2k}(X)$, and by Proposition~\ref{prop:decompCurtisReiner} and  Corollaries~\ref{cor:linkZp}~\&~\ref{cor:linkmGlp} 
one has:
\begin{align*}
\dim\im(\bar\tau_{2k})&=h^{2k}(X)-\dim H^{2k}(X,\IF_p)^G\\
&=(p-1)\ell_p^{2k}(X)+(p-2)\ell_{p-1}^{2k}(X)\\
&=\aG^{2k}(X)+(p-2)\mG^{2k}(X),
\end{align*}
hence the expected inequality.

Assume that $H^\odd(X/G,\IF_p)=0$.
For $1\leq k\leq d-1$, Proposition~\ref{prop:ExactSeq} gives a commutative diagram with exact rows:
$$
\xymatrix{H^{2k}_\sigma(X)\ar[r]^-{\alpha_{2k}}\ar[d]^{\bar\tau^{p-2}_{2k}} & H^{2k}(X,\IF_p)\ar[r]^-{\beta_{2k}}\ar@{=}[d] & H^{2k}_\tau(X)\oplus H^{2k}(X^G,\IF_p)\ar[r]^-{\gamma}\ar[d]^{\iota^*\oplus\id} & H^{2k+1}_\sigma(X)\ar[r]\ar[d]^{\bar\tau^{p-2}_{2k+1}} & 0\\
H^{2k}_\tau(X)\ar[r]^-{\widetilde{\alpha}_{2k}} & H^{2k}(X,\IF_p)\ar[r]^-{\widetilde{\beta}_{2k}} & H^{2k}_\sigma(X)\oplus H^{2k}(X^G,\IF_p)\ar[r]^-{\delta} & H^{2k+1}_\tau(X)\ar[r] & 0
}
$$ 
We first show that $\bar\tau^{p-2}_{2k+1}$ is injective by a diagram chasing. Denote by $\gamma=(\gamma',\gamma'')$ the components. As previously 
observed, since $H^{2k+1}(X/G,\IF_p)=0$ $\gamma''$ is surjective. Let $x\in H^{2k+1}_\sigma(X)$ such that $\bar\tau^{p-2}_{2k+1}(x)=0$. There exists $y\in H^{2k}(X^G,\IF_p)$ such that $\gamma(y)=\gamma''(y)=x$. Considering $y\in H^{2k}(X^G,\IF_p)$ in the second row of the diagram, one gets $\delta(y)=0$ hence there exists $z\in H^{2k}(X,\IF_p)$ such that $\widetilde{\beta}_{2k}(z)=\widetilde{\beta}_{2k}''(z)=y$. Considering $z$ in the first row, one gets $\beta_{2k}(z)=\beta''_{2k}(z)=y$. Hence $x=\gamma(y)=\gamma\beta_{2k}(z)=0$. By Lemma~\ref{lem:Hodd(X)=0}(\ref{lemHodd2}), we deduce that $\bar\tau^{p-2}_{2k+1}$ is an isomorphism.

We deduce that the map denoted $\iota^*$ in the diagram is surjective. Take ${x\in H^{2k}_\sigma(X)}$. The element $\delta(x)$ admits a preimage $y$ by $\bar\tau^{p-2}_{2k+1}$. Since $\gamma''$ is surjective, there exists $z\in H^{2k}(X^G,\IF_p)$ such that $\gamma''(z)=y$. Considering $z$ in the second row, one gets $\delta(z)=\bar\tau^{p-2}_{2k+1}(y)=\delta(x)$, hence $x-z\in\Ker(\delta)=\im(\widetilde{\beta}_{2k})$ so there exists $y$ such that $\widetilde{\beta}_{2k}'(y)=x$ and $\widetilde{\beta}_{2k}''(y)=-z$. Considering $y$ in the first row, one gets $\iota^*\beta_{2k}'(y)=\widetilde{\beta}_{2k}'(y)=x$. 

We deduce that $\im(\alpha_{2k})\subset\im(\bar\sigma_{2k})$. Take $y=\alpha_{2k}(x)$. Considering ${x\in H^{2k}_\sigma(X)}$ in the second row, since $\iota^*$ is surjective there exists $w\in H^{2k}_\tau(X)$ such that ${\iota^*(w)=x}$. Since $\gamma''$ is surjective, there exists $t\in H^{2k}(X^G,\IF_p)$ with $\gamma''(t)=\gamma(w)$, hence $\gamma(w-t)=0$ so there exists $z\in H^{2k}(X,\IF_p)$ such that $\beta_{2k}'(z)=w$ and $\beta''_{2k}(z)=-t$. Considering $z$ in the second row, one gets $
\widetilde{\beta}_{2k}'(z)=\iota^*\beta'_{2k}(z)=\iota^*(w)=x$
so $y=\alpha_{2k}\widetilde{\beta}_{2k}'(z)=\bar\sigma_{2k}(z)$.

To conclude, we show that $\im(\widetilde{\alpha}_{2k})\subset\im(\bar\tau_{2k})$. Take $y=\widetilde{\alpha}_{2k}(x)$. Considering~$x$ in the first row, since $\gamma''$ is surjective there exists $w$ such that ${\gamma(x)=\gamma''(w)=\gamma(w)}$, so there exists $z$ with $\beta_{2k}'(z)=x$ and $\beta_{2k}''(z)=-w$, hence ${y=\widetilde{\alpha}_{2k}\beta_{2k}'(z)=\bar\tau_{2k}(z)}$. 

The expected equality follows. 

\end{proof}

\bibliographystyle{amsplain}
\bibliography{BiblioAutIHS}

\providecommand{\bysame}{\leavevmode\hbox to3em{\hrulefill}\thinspace}
\providecommand{\MR}{\relax\ifhmode\unskip\space\fi MR }
\providecommand{\MRhref}[2]{%
  \href{http://www.ams.org/mathscinet-getitem?mr=#1}{#2}
}
\providecommand{\href}[2]{#2}
\begin{thebibliography}{10}

\bibitem{AlldayPuppe}
C.~Allday and V.~Puppe, \emph{Cohomological methods in transformation groups},
  Cambridge Studies in Advanced Mathematics, vol.~32, Cambridge University
  Press, Cambridge, 1993. \MR{1236839 (94g:55009)}

\bibitem{AST}
M.~Artebani, A.~Sarti, and S.~Taki, \emph{Non-symplectic automorphisms of prime
  order on {$K3$} surfaces}, Math. Z. (to appear) \textbf{268} (2011),
  507--533.

\bibitem{BeauvilleKaehler}
A.~Beauville, \emph{Some remarks on {K}\"ahler manifolds with {$c_{1}=0$}},
  Classification of algebraic and analytic manifolds ({K}atata, 1982), Progr.
  Math., vol.~39, Birkh\"auser Boston, Boston, MA, 1983, pp.~1--26.

\bibitem{Beauvillec1Nul}
\bysame, \emph{Vari\'et\'es {K}\"ahleriennes dont la premi\`ere classe de
  {C}hern est nulle}, J. Differential Geom. \textbf{18} (1983), no.~4, 755--782
  (1984).

\bibitem{BD}
A.~Beauville and R.~Donagi, \emph{La vari\'et\'e des droites d'une hypersurface
  cubique de dimension {$4$}}, C. R. Acad. Sci. Paris S\'er. I Math.
  \textbf{301} (1985), no.~14, 703--706.

\bibitem{Boissiere}
S.~Boissi\`ere, \emph{Automorphismes naturels de l'espace de {D}ouady de points
  sur une surface}, Canad. J. Math. \textbf{64} (2012), 3--23.

\bibitem{BNWS}
S.~Boissi\`ere, M.~Nieper-Wi{\ss}kirchen, and A.~Sarti, \emph{Higher
  dimensional {E}nriques varieties and automorphisms of generalized kummer
  varieties}, J. Math. Pures Appl. \textbf{95} (2011), 553--563.

\bibitem{Borel}
A.~Borel, \emph{Seminar on transformation groups}, With contributions by G.
  Bredon, E. E. Floyd, D. Montgomery, R. Palais. Annals of Mathematics Studies,
  No. 46, Princeton University Press, Princeton, N.J., 1960.

\bibitem{BI}
H.~Brandt and O.~Intrau, \emph{Tabellen reduzierter positiver tern\"arer
  quadratischer {F}ormen}, Abh. S\"achs. Akad. Wiss. Math.-Nat. Kl. \textbf{45}
  (1958), no.~4, 261.

\bibitem{Bredon}
G.~E. Bredon, \emph{Introduction to compact transformation groups}, Academic
  Press, New York, 1972, Pure and Applied Mathematics, Vol. 46.

\bibitem{Brown}
K.~S. Brown, \emph{Cohomology of groups}, Graduate Texts in Mathematics,
  vol.~87, Springer-Verlag, New York, 1994, Corrected reprint of the 1982
  original. \MR{1324339 (96a:20072)}

\bibitem{CurtisReiner}
C.~W. Curtis and I.~Reiner, \emph{Representation theory of finite groups and
  associative algebras}, Wiley Classics Library, John Wiley \& Sons Inc., New
  York, 1988, Reprint of the 1962 original, A Wiley-Interscience Publication.
  \MR{1013113 (90g:16001)}

\bibitem{Deligne}
P.~Deligne, \emph{Th\'eor\`eme de {L}efschetz et crit\`eres de
  d\'eg\'en\'erescence de suites spectrales}, Inst. Hautes \'Etudes Sci. Publ.
  Math. (1968), no.~35, 259--278.

\bibitem{DW}
W.~G. Dwyer and C.~W. Wilkerson, \emph{Smith theory revisited}, Ann. of Math.
  (2) \textbf{127} (1988), no.~1, 191--198.

\bibitem{Elagin}
A.~D. Elagin, \emph{On an equivariant derived category of bundles of projective
  spaces}, Tr. Mat. Inst. Steklova \textbf{264} (2009), no.~Mnogomernaya
  Algebraicheskaya Geometriya, 63--68.

\bibitem{EGL}
G.~Ellingsrud, L.~G{\"o}ttsche, and M.~Lehn, \emph{On the cobordism class of
  the {H}ilbert scheme of a surface}, J. Algebraic Geom. \textbf{10} (2001),
  no.~1, 81--100.

\bibitem{Fujiki}
A.~Fujiki, \emph{On the de {R}ham cohomology group of a compact {K}\"ahler
  symplectic manifold}, Algebraic geometry, {S}endai, 1985, Adv. Stud. Pure
  Math., vol.~10, North-Holland, Amsterdam, 1987, pp.~105--165.

\bibitem{GS}
A.~Garbagnati and A.~Sarti, \emph{Symplectic automorphisms of prime order on
  {$K3$} surfaces}, J. Algebra \textbf{318} (2007), no.~1, 323--350.

\bibitem{Hassett}
B.~Hassett, \emph{Special cubic fourfolds}, Compositio Math. \textbf{120}
  (2000), no.~1, 1--23.

\bibitem{Huybrechts}
D.~Huybrechts, \emph{Compact hyper-{K}\"ahler manifolds: basic results},
  Invent. Math. \textbf{135} (1999), no.~1, 63--113.

\bibitem{Kharlamov}
V.~M. Kharlamov, \emph{The topological type of nonsingular surfaces in {${\bf
  R}P^{3}$} of degree four}, Funct. Anal. Appl. \textbf{10} (1976), no.~4,
  295--304.

\bibitem{Krasnov1}
V.~A. Krasnov, \emph{Harnack-{T}hom inequalities for mappings of real algebraic
  varieties}, Izv. Akad. Nauk SSSR Ser. Mat. \textbf{47} (1983), no.~2,
  268--297.

\bibitem{Krasnov2}
\bysame, \emph{Real algebraic {GM}-manifolds}, Izv. Ross. Akad. Nauk Ser. Mat.
  \textbf{62} (1998), no.~3, 39--66.

\bibitem{LehnSorger}
M.~Lehn and C.~Sorger, \emph{The cup product of {H}ilbert schemes for {$K3$}
  surfaces}, Invent. Math. \textbf{152} (2003), no.~2, 305--329.

\bibitem{Markman3}
E.~Markman, \emph{The {B}eauville--{B}ogomolov class as a characteristic
  class}, \texttt{arXiv:1105:3223v1}.

\bibitem{MarkmanTorelli}
\bysame, \emph{A survey of {T}orelli and monodromy results for
  holomorphic-symplectic varieties}, \texttt{arXiv:1101.4606v2}.

\bibitem{Markman}
\bysame, \emph{Integral generators for the cohomology ring of moduli spaces of
  sheaves over {P}oisson surfaces}, Adv. Math. \textbf{208} (2007), no.~2,
  622--646.

\bibitem{Markman2}
\bysame, \emph{Integral constraints on the monodromy group of the
  hyper{K}\"ahler resolution of a symmetric product of a {$K3$} surface},
  Internat. J. Math. \textbf{21} (2010), no.~2, 169--223.

\bibitem{Markushevich}
D.~Markushevich, \emph{Rational {L}agrangian fibrations on punctual {H}ilbert
  schemes of {$K3$} surfaces}, Manuscripta Math. \textbf{120} (2006), no.~2,
  131--150.

\bibitem{MasleyMontgomery}
J.~M. Masley and H.~L. Montgomery, \emph{Cyclotomic fields with unique
  factorization}, J. Reine Angew. Math. \textbf{286/287} (1976), 248--256.

\bibitem{Mongardi2}
G.~Mongardi, \emph{On symplectic automorphisms of hyperk\"ahler fourfolds},
  \texttt{arXiv:1112.5073v3}.

\bibitem{Mongardi1}
\bysame, \emph{Symplectic involutions on deformations of {$K3^{[2]}$}},
  \texttt{arXiv:1107.2854}.

\bibitem{Nakajima}
H.~Nakajima, \emph{Heisenberg algebra and {H}ilbert schemes of points on
  projective surfaces}, Ann. of Math. (2) \textbf{145} (1997), no.~2, 379--388.

\bibitem{NW}
M.~Nieper-Wi{\ss}kirchen, \emph{Chern numbers and {R}ozansky-{W}itten
  invariants of compact hyper-{K}\"ahler manifolds}, World Scientific
  Publishing Co. Inc., River Edge, NJ, 2004.

\bibitem{Nikulin}
V.~V. Nikulin, \emph{Finite groups of automorphisms of {K}\"ahlerian {$K3$}
  surfaces}, Trudy Moskov. Mat. Obshch. \textbf{38} (1979), 75--137.

\bibitem{Nikulinfactor}
\bysame, \emph{Integral symmetric bilinear forms and some of their geometric
  applications}, Izv. Akad. Nauk SSSR Ser. Mat. \textbf{43} (1979), no.~1,
  111--177.

\bibitem{OGrady}
Kieran~G. O'Grady, \emph{Irreducible symplectic 4-folds numerically equivalent
  to {$(K3)^{[2]}$}}, Commun. Contemp. Math. \textbf{10} (2008), no.~4,
  553--608.

\bibitem{OS}
K.~Oguiso and S.~Schr{\"o}er, \emph{Enriques manifolds}, J. Reine Angew. Math.
  (to appear) (2011).

\bibitem{QinWang}
Z.~Qin and W.~Wang, \emph{Integral operators and integral cohomology classes of
  {H}ilbert schemes}, Math. Ann. \textbf{331} (2005), no.~3, 669--692.

\bibitem{Verbitsky}
M.~Verbitsky, \emph{Cohomology of compact hyper-{K}\"ahler manifolds and its
  applications}, Geom. Funct. Anal. \textbf{6} (1996), no.~4, 601--611.

\end{thebibliography}

\end{document}